\newtheorem{defn}{Definition}[section]
\newtheorem{theorem}{Theorem}
\newtheorem{lemma}[defn]{Lemma}
\newtheorem{prop}[defn]{Proposition}
\newtheorem{proposition}[defn]{Proposition}
\newtheorem{assumption}{Assumption}
\newtheorem{example}[defn]{Example}
\newtheorem{remark}[defn]{Remark}
\newtheorem{corollary}[defn]{Corollary}
\def\R{{\mathbb R}}
\def\N{{\mathbb N}}
\def\C{{\cal C}}
\def\Z{{\mathbb Z}}
\def\1{{1\!\!1}}
\def\E{{\mathbb E}}
\def\P{{\mathbb P}}
\def\Q{{\mathbb Q}}
\def\ind{{1\!\!1}}
\def\func{\phi}
\def\cxtwo{c_{x}^2}
\def\cxone{c_{x}^1}
\def\nvec{v}
\def\csto{\eta}
\def\cstt{\lambda}
\def\MH{{\mathcal H}}
\def\MM{{\mathcal M}}
\def\MB{{\mathcal B}}
\def\MH{{\mathcal H}}
\def\MI{{\mathcal I}}
\def\MF{{\mathcal F}}
\def\MV{{\mathcal V}}
\def\MW{{\mathcal W}}
\def\ML{{\mathcal L}}
\def\MC{{\mathcal C}}
\def\MJ{{\mathcal J}}
\def\MU{{\mathcal U}}
\def\cal{\mathcal}
\def\dist{{\rm{dist}}}
\def\supp{{\rm{supp}}}
\def\endproof{{$\Box$}}
\newcommand{\noi}{\noindent }
\newcommand{\ba}{\begin{array}}
\newcommand{\ea}{\end{array}}
\newcommand{\bea}{\begin{eqnarray}}
\newcommand{\eea}{\end{eqnarray}}
\newcommand{\beas}{\begin{eqnarray*}}
\newcommand{\eeas}{\end{eqnarray*}}
\newcommand{\be}{\begin{equation}}
\newcommand{\ee}{\end{equation}}
\newcommand{\bt}{\begin{theorem}}
\newcommand{\et}{\end{theorem}}
\newcommand{\bc}{\begin{center}}
\newcommand{\ec}{\end{center}}
\newcommand{\ben}{\begin{enumerate}}
\newcommand{\een}{\end{enumerate}}
\newcommand{\lan}{\langle}
\newcommand{\ran}{\rangle}
\newcommand{\ei}{\end{itemize}}
\newcommand{\ds}{\displaystyle}
\newcommand{\ve}{\varepsilon}
\newcommand\ccspace{{\cal C}\left[0,\infty\right)}
\def\orig{0}
\def\newset{\Theta}
\def\newH{S}
\def\sm{\setminus}
\def\ra{\rightarrow}
\title[Stationary Distribution Characterization]{Characterization of stationary
   distributions of reflected diffusions}
\author{Weining Kang}
\address{Department of Mathematics \& Statistics\\
University of Maryland, Baltimore County\\
1000 Hilltop Circle\\
Baltimore, MD 21250 }
 \email{wkang@umbc.edu}
\author{Kavita Ramanan}
\address{Division  of Applied Mathematics\\
Brown University\\
Providence, RI 02118 \\
USA}
\email{kramanan@math.cmu.edu}
\thanks{The second author was partially supported by NSF grants
  CMMI-1059967 (formerly 0728064) and CMMI-1052750 (formerly 0928154)}
\subjclass[2010]{Primary: 60H10, 60J60, 60J65;  Secondary: 90B15, 90B22}
\keywords{Reflected diffusions, reflecting diffusions, stationary distribution, invariant
  distribution, submartingale  problem, stochastic differential
  equations with reflection, basic adjoint relation, queueing networks}
\begin{document}

\begin{abstract} Given a domain $G$, a reflection vector field
$d(\cdot)$ on $\partial G$, the boundary of $G$, and drift and dispersion coefficients $b(\cdot)$ and $\sigma(\cdot)$,
  let  $\ML$ be the usual second-order elliptic operator associated with $b(\cdot)$ and
$\sigma(\cdot)$.  Under suitable assumptions that,
  in particular,  ensure that the associated submartingale problem is
  well posed,
it  is shown that a probability measure $\pi$ on $\overline{G}$ is a stationary distribution
for the corresponding reflected diffusion  if and only if  $\pi (\partial G) = 0$ and
\[  \int_{\bar{G}} \ML f (x) \pi (dx) \leq 0
\]
for every $f$ in a certain class of test functions.  
Moreover, the assumptions  are shown to be satisfied
by a large class of reflected diffusions  in piecewise smooth
multi-dimensional domains with possibly oblique reflection.
 \end{abstract}

\maketitle




\section{Introduction}

\subsection{Description of Main Results}

The main focus of this work is to provide a simple characterization
of stationary distributions of a broad class of reflected diffusions.
Consider a domain $G\subset \R^J$, equipped with a vector field
$d(\cdot)$ on the boundary $\partial G$, and drift and dispersion coefficients $b: \overline{G} \mapsto \R^J$ and
$\sigma: \overline{G} \mapsto \R^{J} \times \R^{N}$, where $\overline{G}$ is the
closure of $G$.  A reflected diffusion associated with $(G,
d(\cdot))$, $b(\cdot)$ and $\sigma(\cdot)$ is, roughly
speaking, a continuous Markov
  process that behaves locally like a diffusion with state-dependent
  drift $b(x)$ and dispersion $\sigma(x)$, for $x$ in
  $G$, and is instantaneously constrained to stay inside $\overline{G}$  by a pushing term that is only allowed
to act when the process is on the boundary, and then only along the
directions specified by the vector field $d(\cdot)$ at that point on
the boundary.      One approach to making this heuristic description precise
is the so-called submartingale problem of Stroock and
Varadhan \cite{StrVar71},
which is a generalization of the martingale problem that was 
introduced  to characterize  the law of reflected diffusions in smooth domains.
A submartingale problem is said to be well posed if it has a unique solution, and this
 implies existence and uniqueness in law  of the associated reflected diffusion.
 A precise formulation of
 the submartingale problem in multi-dimensional domains is given in
 Section \ref{sec-rediff}.    The relation of this formulation
to stochastic differential equation characterizations of reflected
diffusions can be found in \cite{KanRam11b}.

For reflected diffusions in a bounded domain, the family of 
time-averaged occupation  measures is automatically tight, 
and the existence of a stationary distribution can be deduced as a
simple consequence. 
 On the other hand, for reflected diffusions in unbounded domains  suitable conditions on the
 drift and reflection vector field  need to
be imposed to guarantee positive recurrence (see, e.g.,
\cite{AtaBudDup01} for sufficient conditions and also \cite{Bra11},
which shows that the issue of stability 
can be quite subtle in the presence of oblique reflection).
In either case,
when the diffusion coefficient is uniformly elliptic,
  uniqueness of the stationary distribution
follows from standard results in ergodic theory.
The focus of the present paper is 
 on characterization of the stationary distribution.

Given continuous drift and dispersion coefficients $b: G \mapsto \R^J$ and
$\sigma: G \mapsto \R^{J \times N}$,
let $a:G \mapsto \R^{J \times J}$ be the associated diffusion
coefficient given by 
$a(\cdot) = \sigma (\cdot)\sigma^T (\cdot)$, where $\sigma^T(x)$
denotes the transpose of the matrix $\sigma(x)$, and let 
$\ML$ be  the usual associated second-order  differential operator 
\be \label{operL}
\ML f(x)\doteq\sum_{i=1}^J b_i(x)\frac{\partial f}{\partial
x_i}(x) +\frac{1}{2}\sum_{i,j=1}^J a_{ij}(x)\frac{\partial^2f}{\partial
x_i\partial x_j}(x), \qquad f\in \C^2_b(\overline G),
\ee
where $\C^2_b(\overline G)$ is the space of twice continuously
differentiable functions on $\overline G$ that, along with their first
and second partial derivatives, are bounded.  
The first main result of this paper, Theorem \ref{thm:SS},  
shows that under suitable conditions (see Assumptions \ref{ass:TF} and
\ref{ass:V}),  a probability measure  $\pi$ on $\overline{G}$ is a
stationary distribution 
for a reflected diffusion defined by a well posed submartingale problem if and only if
$\pi$ satisfies $\pi (\partial G)  = 0$ and
\be \label{mono0}
\int_{\overline G} \ML f(x)\,d\pi(x)\leq 0
\ee
for all $f$ belonging to $\MH$, the class of test functions defined in
(\ref{dis:H}).
The second result, Theorem \ref{thm:test},  shows that the
conditions of Theorem \ref{thm:SS}  are satisfied by a large
class of reflected diffusions in piecewise smooth domains described in
Definition \ref{def:A}, which includes  a
family of (possibly non-semimartingale) reflected Brownian motions (RBM) in convex polyhedral
domains with piecewise constant vector fields that arise in
applications.    
Illustrative examples of such reflected diffusions are provided  
 in Section \ref{subs-egs},  where it is additionally shown that 
the conditions of Theorem \ref{thm:SS} are also satisfied by 
some processes outside this class, including
 reflected diffusions  in two-dimensional domains with cusps.

If the stationary distribution $\pi$ of a reflected diffusion can be shown  to have a density
$p$ that is sufficiently regular, standard arguments can be used to show
that $p$ should be the solution to a certain partial differential
equation governed by the adjoint $\ML^*$ of $\ML$, and subject to
certain oblique derivative boundary conditions.
In non-smooth domains, such regularity properties are not always
satisfied and, even when satisfied, are typically hard to establish
{\em a priori}.
   Nevertheless, in some cases it is possible to write
down the formal adjoint partial differential equation and boundary conditions and find an explicit
   solution to it
(see, e.g.,  \cite{HarLanShe85}, \cite{KneMor03}, 
\cite{New79}, \cite{SchMie94}). 
In such  situations, it would be useful to have a result that guarantees
that this solution is indeed the stationary distribution of the
reflected diffusion.
Using Theorem \ref{thm:SS},  it is shown in Corollary \ref{Thm:SBR}  that any nonnegative integrable
solution of the adjoint partial differential equation, when
suitably normalized,  is indeed a stationary density  of the reflected
diffusion.

\subsection{Motivation and Prior Work} 

Our results can be viewed as a generalization of Echeverria's results
\cite{Ech82} for diffusions in $\R^J$ to the case of reflected
diffusions.   Given sufficiently regular drift  and dispersion coefficients
$b(\cdot)$ and $\sigma(\cdot)$,
Echeverria \cite{Ech82}  showed that a probability measure
$\pi$ is a stationary distribution for the associated diffusion in
$\R^J$ characterized by   the corresponding martingale problem if and only if
 (\ref{mono0}) holds with  inequality replaced by equality and
 for test functions $f \in \C^2_c(\R^J)$, the space of twice
 continuously differentiable functions on $\R^J$ with compact support.
An extension of this result to reflected diffusions in $\C^2$-domains
 was considered by Weiss in his unpublished Ph.D.\ thesis
\cite{Weiss81}.
However, the results of \cite{Weiss81} do not apply to reflected
diffusions in non-smooth domains in $\R^J$.
 Such reflected diffusions arise in many fields including finance \cite{Bou08}, economics
\cite{Ramsub00}, communications \cite{RamRei03} and
 operations research  \cite{HarWil87}, and it is of interest
to characterize their stationary distributions.
For the particular case of reflected Brownian motion (RBMs) in
convex polyhedral domains,  explicit expressions for the stationary distribution
 have been established only in some two-dimensional examples
 \cite{DieMor09}, \cite{HarLanShe85}, \cite{Wil85},  or when a certain skew-symmetry condition holds
\cite{Wil87}, in which case  the stationary distribution has a
density that is the product of its marginals.
In other cases,
numerical schemes have been proposed for the computation of
 stationary distributions  (see, e.g.,  \cite{DaiHar92} and \cite{SauGlyZee}).
The characterization of the stationary distribution established
in this paper could be used to provide 
a rigorous justification for these numerical
schemes.
For reflected Brownian motions (RBMs) in convex
polyhedrons that are semimartingales,  there has been some work on 
establishing the basic adjoint relationship using
so-called constrained martingale problems 
in the unpublished manuscript \cite{DaiKur94}.
In this paper, we establish a different characterization by a
different method of proof and deal with the more  general setting
of curved, non-smooth domains, and reflected  diffusions (with state dependent
coefficients) that are not necessarily semimartingales, both of which arise
in applications (see, e.g.,  \cite[Section 5.6]{KanKelLeeWil09}
and \cite{KanRam10, RamReed11, RamRei03, RamRei08}).

\subsection{Outline of the Paper}

 Section \ref{sec-rediff}  contains a precise definition of  the submartingale problem
and  the associated class of reflected diffusions.    In Section \ref{sec-mainres}  the
main results of the paper, Theorem \ref{thm:SS}, Theorem \ref{thm:test}
and  Corollary
\ref{Thm:SBR}, are stated.  The proof of Corollary \ref{Thm:SBR} is
given 
in Section \ref{sec-mainres}, whereas 
the proofs of Theorem \ref{thm:SS}  and Theorem \ref{thm:test}
are deferred to Section \ref{sec-proof1} and Section \ref{sec-proof2},
respectively.
Section \ref{subs-egs} contains illustrative examples of reflected diffusions for which the 
stationary distribution characterization established 
in this paper is valid. 
The proofs of some technical lemmas are relegated to the Appendix.
 First, in the next section, we summarize some common notation used in the paper.

\subsection{Notation and Terminology}

The following notation is used throughout the paper.
 $\Z$ is the set of integers, $\N$ is the set of positive integers, $\R$ is the set of real numbers, $\Z_+$ is the set of non-negative integers and
$\R_+$ the set of non-negative real numbers. For each $J\in \N$,
$\R^J$ is the $J$-dimensional Euclidean space and $|\cdot|$ and $\lan
\cdot,\cdot \ran$, respectively,
denote the Euclidean norm  and the inner product on $\R^J$.
For each vector $v\in \R^J$ and matrix $\sigma\in \R^J\times \R^N$,
$v^T$ and $\sigma^T$ denote the transpose of $v$ and $\sigma$,
respectively.
For each set $A\subset \R^J$, $A^\circ$, $\partial A$ and $\overline
A$ denote the interior, boundary and closure of $A$, respectively. For
each $x\in \R^J$ and $A\subset \R^J$, $\dist(x,A)$ is the distance
from $x$ to $A$ (that is, $\dist(x,A)=\inf\{y\in A:\ |y-x|\}$). For
each $A\subset \R^J$ and $r>0$, $B_r(A)=\{y\in \R^J:\ \dist(y,A)\leq
r\}$, and given $\ve > 0$ let $A^\ve \doteq \{y \in \R^J: \dist(y,A) <
\ve\}$ denote the (open) $\ve$-fattening of $A$.  If $A=\{x\}$, we
simply denote $B_r(A)$ by $B_r(x)$.  
We also let $\ind_B$ denote the indicator function of the set $B$
(that is, $\ind_B (x) = 1$ if $x \in B$ and $\ind_B(x) = 0$
otherwise).

Given  a domain $E$ in $\R^n$, for some $n \in \N$, and any
$m\in \Z_+\cup\{\infty\}$, let $\C^m(E)$ be the space of real-valued
functions that are continuous and  $m$ times continuously
differentiable on $E$ with partial derivatives of
order up to and including $m$.  Also, let $\C_b^m(E)$ be
 the subspace of $\C^m(E)$ consisting of bounded functions whose partial derivatives of order up to
and including $m$ are also bounded, let $\C_c^m(E)$ be the subspace of
$\C^m(E)$ consisting of functions that vanish outside compact sets,
and
let $\C_0^m(E)$ be the subspace of $\C^m(E)$ consisting of functions
$f$ that vanish at infinity.
In addition, let $\C_c^m(E)\oplus\R$ be the direct sum of $\C_c^m(E)$
and the space of constant functions, that is,
the space of functions that are sums of functions in $\C_c^m(E)$ and
constants in $\R$.
Likewise, let $\C_0^m(E)\oplus\R$ be the space of functions that are
sums of functions in $\C_0^m(E)$ and constants in $\R$.
If $m=0$, we  denote $\C^m(E)$, $\C_b^m(E)$,  $\C_c^m(E)$, $\C_0^m(E)$
, $\C_c^m(E)\oplus\R$ and $\C_0^m(E)\oplus\R$ simply by $\C(E)$, $\C_b(E)$, $\C_c(E)$, $\C_0(E)$, $\C_c(E)\oplus\R$, and $\C_0(E)\oplus\R$, respectively.
When $E$ is the closure of a domain, $\C^m(E)$ is to be interpreted as
the collection of  functions in $\cap_{\ve > 0} \C^m
(E^\ve)$, where $E^\ve$ is an open $\ve$-neighborhood of $E$, restricted to $E$.
The support of a function $f$ is denoted by $\supp(f)$ and the gradient of $f$ is denoted by $\nabla f$.

The space of continuous functions on $[0,\infty)$ that take values in $\R^J$
is denoted by $\ccspace$,  the Borel $\sigma$-algebra of $\ccspace$ is denoted by $\MM$, and the natural filtration on $\ccspace$ is denoted by $\{\MM_t\}$. The Borel $\sigma$-algebra of $\overline G$ is denoted by $\MB(\overline G)$.

\section{A Class of Reflected Diffusions}
\label{sec-rediff}

In this section we introduce  the class of reflected diffusions that we consider.
Let $G$ be a nonempty connected domain in $\R^J$, and let $d(\cdot)$ be
a set-valued mapping defined on $\overline G$, such
that  each $d(x)$, $x \in \partial G$,
is a non-empty closed convex cone in $\R^J$ with vertex at
 the origin $\orig$, $d(x) =
 \{\orig\}$ for each
$x$ in $G^\circ$,  and the graph of $d(\cdot)$ is closed, that is, the set
$\{(x,v): x \in \overline{G}, v \in d(x) \}$ is a closed subset of
$\R^{2J}$.
Let $\MV$ be a subset of $\partial G$.
As shown in Section \ref{subs-egs},  $\MV$ will typically
be a (possibly empty) subset of the non-smooth parts of the boundary of the domain $G$ where $d(\cdot)$ is
not sufficiently well behaved.
For each function $f$ defined on $\R^J$, we say $f$ is constant in a
neighborhood of $\MV$ if for each $x \in \MV$, $f$ is constant
in some open neighborhood of $x$.
Given measurable drift and dispersion coefficients $b:\R^J \mapsto \R^J$ and
$\sigma:\R^J \mapsto \R^J \times \R^N$, and $a=\sigma \sigma^T:\
\R^J \mapsto \R^J \times \R^J$,  let $\ML$ be the associated
differential operator defined in (\ref{operL}).
One way of characterizing a reflected diffusion is through the so-called
submartingale problem.


\begin{defn} (Submartingale Problem) \label{def-smg}
A family $\{\Q_z,z\in \overline G\}$ of probability measures on
$(\ccspace,\MM)$ is a solution to the submartingale problem
associated with $(G,d(\cdot))$, $\cal V$, drift $b(\cdot)$ and
dispersion $\sigma(\cdot)$ if   for each $A\in \MM$,
the mapping $z \mapsto \Q_z(A)$ is $\MB(\overline G)$-measurable and for each $z\in \overline G$,
$\Q_z$ satisfies the following three properties:
\begin{enumerate}
\item[1.] $\Q_z(\omega(0)=z)=1$;
\item[2.] For every $t\in [0,\infty)$ and $f\in \C_c^2(\R^J)$ such
that $f$ is constant in a neighborhood of $\MV$
and $\left< d,\nabla f(x)\right>\geq 0$ for all $d\in d(x)$ and
$x\in \partial G$, the process
\be
\label{term-mart}
f(\omega(t))-\int_0^t\ML
f(\omega(u))\,du,   \quad t \geq 0,
\ee
is a $\Q_z$-submartingale on
$(\ccspace,\MM,\{\MM_t\})$;
\item[3.] For every $z \in \overline G$,
\[\E^{\Q_z}\left[\int_0^\infty
\ind_{\MV}(\omega(s))\,ds \right]=0. \]
\end{enumerate}
In this case,
$\Q_z$ is said to be a solution to the submartingale problem
starting from $z$. Moreover, given a probability distribution $\pi$ on
$\overline G$, the probability measure $\Q_\pi$, defined by
 \be \label{Qpi}\Q_\pi(A)=\int_{\overline G} \Q_z(A)\,\pi(dz), \qquad
 \mbox{  for every }A\in \MM, \ee
 is said to be  a solution to the submartingale problem with initial distribution $\pi$.
\end{defn}

The submartingale problem  is a generalization of the martingale
problem that was first introduced in \cite{StrVar06} to characterize the law of reflected
diffusions in smooth domains.
Extensions of the submartingale problem to characterize reflected
Brownian motions in  non-smooth domains  were considered in previous
works such as \cite{VarWil85} for two-dimensional wedges and
\cite{Wil87} and \cite{Ram06} for certain
convex polyhedral domains.     Definition \ref{def-smg} generalizes these
formulations  further to accommodate a more general class of
multi-dimensional reflected diffusions.
The first condition in Definition \ref{def-smg} simply
states that the family of measures is parameterized by the initial
condition.   The second condition in Definition \ref{def-smg}
captures the notion of diffusive behavior in
the interior, and reflection along the appropriate directions on the
boundary.
 Since the ``test functions'' in property 2 are constant in a neighbourhood of $\MV$, this condition
does not provide information on the behavior of the diffusion in a
neighborhood of $\MV$.  The third condition
is imposed to ensure instantaneous reflection on the boundary (precluding the possibility of
absorption or partial reflection at the boundary).   
The set $\MV$ is typically a subset of the non-smooth parts of the
boundary.   In many cases, there is some flexibility in the choice of 
$\MV$, with several choices yielding equivalent characterizations that
are all compatible with the stochastic
differential equation (with reflection) formulation.  
As described in Remark \ref{rem-MV}, there is a canonical choice for $\MV$  that is suitable for all the considered
examples.  
It is typically harder to establish uniqueness, rather than existence, 
of solutions to the submartingale problem.  
In many cases, it is easier to establish uniqueness, and therefore, 
well-posedness of the submartingale problem 
 (see Definition \ref{def-wellposed} below) 
when the set $\MV$ is smaller. 
For further discussion on the formulation of the 
submartingale problem in the non-smooth setting, see \cite{KanRam11b}. 

\begin{defn}
\label{def-wellposed}
The submartingale problem
associated with $(G,d(\cdot))$, $\MV$, drift $b(\cdot)$ and
dispersion $\sigma(\cdot)$ is said to be well posed if  there exists exactly one solution to the submartingale problem.
\end{defn}

We will only consider submartingale problems
that are  well posed.  In addition, we will also 
assume throughout, without explicit mention, that the drift and
diffusion coefficients are continuous. 
Under this assumption, for every $f \in
\C_c^2(\R^J)$, the mapping
$x \mapsto  \ML f(x)$ is continuous, and so the integral
in (\ref{term-mart}) is clearly well defined.

We now consider  reflected
diffusions associated to the submartingale problem.

\begin{defn}
A stochastic process $Z$ defined on a probability space  $(\Omega, {\cal F}, \P)$ is said to be a reflected diffusion associated with $(G,d(\cdot))$, $\MV$, drift $b(\cdot)$ and
dispersion $\sigma(\cdot)$ if its family of distribution laws $\{\Q_z,\ z\in \overline G\}$ is the unique solution to the submartingale problem, where $\Q_z$, $z\in \overline G$, is the conditional distribution of $Z$ under $\P$,  conditioned on $Z(0)=z$.
\end{defn}

Reflected diffusions are sometimes also defined in terms of weak
or strong solutions to stochastic differential equations using the
Skorokhod problem or extended Skorokhod problem.  
The relationship between this formulation and the submartingale
problem characterization is studied in \cite{KanRam11b}. 
In particular, 
it is shown that weak solutions to  a large class of stochastic differential equations in piecewise
smooth domains are associated with well posed submartingale problems.

For our subsequent analysis, it will
be convenient to also consider a localized version of the
submartingale problem.
  For each $r>0$, consider the stopping time
 \be \label{chi}\chi^r=\inf\{t\geq 0:\ |\omega(t)|\geq r\},\qquad \omega\in \ccspace. \ee

\begin{defn}\label{def-smgl} (Localized Submartingale Problem)
Fix $r>0$. A family $\{\Q_z^r,z\in \overline G\}$ of probability measures on
$(\ccspace,\MM)$ is a solution to the submartingale problem
associated with $(G,d(\cdot))$, $\cal V$, drift $b(\cdot)$ and
dispersion $\sigma(\cdot)$ that is stopped at the time $\chi^r$ if
for each $A\in \MM$, the map $z \mapsto \Q_z^r(A)$ is $\MB(\overline
G)$-measurable and for each $z\in \overline G$,
$\Q_z^r$ satisfies the following three properties:
\begin{enumerate}
\item[1.] $\Q_z^r(\omega(0)=z)=1$;
\item[2.] For every $t\in [0,\infty)$ and $f\in \C_c^2(\R^J)$ such
that $f$ is constant in a neighborhood of $\MV$
and $\left< d,\nabla f(x)\right>\geq 0$ for all $d\in d(x)$ and
$x\in \partial G$,  the process
\[f(\omega(t\wedge \chi^r))-\int_0^{t\wedge \chi^r}\ML
f(\omega(u))\,du, \qquad t \geq 0, \]
is a $\Q_z^r$-submartingale on
$(\ccspace,\MM,\{\MM_t\})$;
\item[3.] For every $z \in \overline{G}$,  \[\E^{\Q_z^r}\left[\int_0^{\chi^r}
\ind_{\MV}(\omega(s))\,ds \right]=0. \]
\end{enumerate}
In this case,
$\Q_z^r$ is said to be a solution to the submartingale problem
starting from $z$ that is stopped at the time $\chi^r$.
Moreover, given a probability distribution $\pi$ and $\overline G$,
the probability measure $\Q^r_\pi$, defined as in (\ref{Qpi}), but with $\Q$
replaced by $\Q^r$, is said to
 be  a solution to the submartingale problem stopped at $\chi^r$ with initial distribution $\pi$.
\end{defn}

\begin{remark} \label{WPL} {\em When the submartingale problem associated with $(G,d(\cdot))$, $\cal V$, drift $b(\cdot)$ and
dispersion $\sigma(\cdot)$ is well posed, for each $r > 0$, the submartingale problem
stopped at the stopping time $\chi^r$ is also well posed.
This can be justified by using an argument similar to the one
used in the proof of Theorem 6.1.2 of \cite{StrVar06}.}
\end{remark}

\section{Main Results}
\label{sec-mainres}

The primary goal of this work is to provide a useful characterization of the
stationary distributions of a broad class of reflected diffusions.

\begin{defn} \label{def:1}
A probability measure $\pi$ on $\overline G$ is a stationary
distribution for the unique solution $\{\Q_z, z\in \overline G\}$ to a
well posed submartingale problem if  $\pi$ satisfies the
property that the law of $\omega(t)$ under $\Q_\pi$ is $\pi$ for each
$t\geq 0$.  In this case, $\pi$ is also said to be a stationary distribution
of any reflected diffusion associated with the well posed submartingale
problem.
\end{defn}


The  main result of this paper is a necessary and sufficient condition for a
probability measure $\pi$ to be a stationary distribution for the well posed
submartingale problem.
In what follows, recall that  $\C_c^2(\overline{G})\oplus\R$ is the
space of functions that are sums of functions in  $\C_c^2(\overline{G})$ and constants in $\R$,
and that $\nabla f$ denotes the 
gradient of a function $f$ on a domain in $\R^J$.   Given a subset
$\MV \subset \partial G$, let $\MH  =
\MH_{\MV}$ be  the set of functions 
\be \label{dis:H}
\MH \doteq  \ds \left\{\begin{array}{ll} f\in \C_c^2(\overline G)\oplus \R:\ & \mbox{$f$ is constant in a neighborhood of $\MV$}, \\ & \mbox{$\left<d, \nabla
f(y)\right>\leq 0$ for $d\in d(y)$ and $y\in \partial G$} \end{array}\right\}. \ee
It is easy to see that if the unique solution $\{\Q_z,z\in \overline
G\}$ to a well posed submartingale problem associated with
$(G,d(\cdot))$ and $\MV$ admits a stationary
distribution $\pi$, then $\pi$ must satisfy the inequality
(\ref{mono0})
 for all $f$ such that $-f\in \MH$,
where $\ML$ is the operator defined in (\ref{operL}).
Indeed, it follows from the second property in Definition
\ref{def-smg} that for each $f$ with $-f\in \MH$,
\[\E^{\Q_\pi}\left[f(\omega(t))-\int_0^t\ML
f(\omega(u))\,du\right]\geq \E^{\Q_\pi}\left[f(\omega(0))\right]. \]
Since  $\E^{\Q_\pi}\left[f(\omega(t))\right] = \E^{\Q_\pi} \left[
  f(\omega(0))\right]$ due to the stationarity of $\pi$, this
 establishes the inequality in (\ref{mono0}) for all functions $f$
 with $-f \in \MH$.
We will show that, under  two assumptions stated below,
 the  later condition is also  sufficient for any probability
 measure $\pi$ with $\pi(\partial G)=0$ to be a stationary
 distribution of $\{\Q_z,z\in \overline G\}$.

The first assumption is the existence of a family of
test functions, which is used in Section
\ref{subs:tight} to establish the tightness
of a  certain sequence of approximating probability measures.

\begin{assumption}\label{ass:TF}
For each $\varepsilon>0$ and $N\in \N$, there exists a family of
nonnegative functions $\{f_{x,\varepsilon} \in \C^2_b(\overline G),\
x\in \overline G\cap B_N(0)\}$ and constants $C(N,\varepsilon)>0$ and
$c(N,\varepsilon)>0$ such that the following three properties hold:
\begin{enumerate}
\item[1.] $f_{x,\varepsilon}$ is a finite or countable sum of functions in $\MH\cap \C^2_c(\overline G)$;
\item[2.]  For each $y\in \overline G$, $f_{x,\varepsilon}(y)=0$ if $|y-x|\leq \varepsilon/2$, and $f_{x,\varepsilon}(y)>c(N,\varepsilon)$ if $|y-x|>3\varepsilon$;
\item[3.] $|\ML f_{x,\varepsilon}(y)|\leq C(N,\varepsilon)$ for each $y\in \overline G$.
\end{enumerate}
\end{assumption}

The second assumption concerns the geometry of the set ${\cal V}$.

\begin{assumption} \label{ass:V}
The set $\MV$ is finite and for each $x\in \MV$, there exist a unit
vector $\nvec_x \neq 0$ and positive  constants $\alpha_x, r_x$ such
that $\lan \nvec_x,y-x\ran \geq  \alpha_x|y-x|$, 
$\nvec_x^T a (y) \nvec_x \geq \alpha_x$ 
and $\lan\nvec_x,d\ran\geq 0$ for all $d\in d(y)$ and all $y\in \overline G
\cap B_{r_x}(x)$.  
\end{assumption}

We now state the first main result of this paper. Its proof is given
in Section \ref{sec-proof1}.
Recall that we assume throughout that the drift and diffusion
coefficients are continuous.

\begin{theorem} \label{thm:SS}  Suppose we are given
$(G, d(\cdot))$, $b(\cdot)$, $\sigma (\cdot)$ and $\MV$ such that
the associated submartingale problem is well posed and
 Assumptions  \ref{ass:TF} and \ref{ass:V} are satisfied.
Let $\pi$ be a probability measure on $(\overline G,\MB(\overline G))$
with $\pi(\partial G)=0$.  Then $\pi$ satisfies the inequality (\ref{mono0})  
 for all  $-f\in \MH$ if and only if
$\pi$ is a stationary distribution for the unique solution to the
associated submartingale problem.
\end{theorem}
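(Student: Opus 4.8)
The plan is to prove the nontrivial direction: assuming $\pi$ is a probability measure with $\pi(\partial G) = 0$ satisfying $\int_{\overline G} \ML f\, d\pi \le 0$ for all $-f \in \MH$ (equivalently $\int \ML f\, d\pi \ge 0$ for all $f \in \MH$, which combined with the reverse inequality gives $\int \ML f\, d\pi = 0$ whenever both $f$ and $-f$ lie in $\MH$), I want to show $\pi$ is stationary. The standard strategy, going back to Echeverria and Weiss, is to build a candidate stationary process out of $\pi$ and the transition mechanism, and then identify it with the unique solution of the submartingale problem. Concretely, I would form the time-reversed/stationary measure and use well-posedness to pin it down. The cleanest route: show that $\Q_\pi$, where $\omega(0) \sim \pi$, has the property that $\pi_t := \text{Law}(\omega(t))$ under $\Q_\pi$ equals $\pi$ for all $t$. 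To do this I would first establish that $\pi$ is an invariant measure for the (sub-Markovian, because of the constraint) semigroup in an integrated/Cesàro sense, then upgrade to genuine stationarity using continuity in $t$.

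The key steps, in order. First, a localization step: because the test functions in $\MH$ have compact support but $G$ may be unbounded, I would work with the stopped submartingale problems of Definition~\ref{def-smgl}, which are well posed by Remark~\ref{WPL}, and also with the family of test functions from Assumption~\ref{ass:TF} to control mass escaping to infinity. Second, the core analytic step: for $f \in \C^2_c(\overline G)$ that is constant near $\MV$ with $\langle d, \nabla f\rangle = 0$ on $\partial G$ (so that both $f$ and $-f$ are in $\MH$, making \eqref{term-mart} a martingale under each $\Q_z$), I would use the hypothesis $\int \ML f\, d\pi = 0$ together with the martingale property to show $\int f\, d\pi_t = \int f\, d\pi$ for all such $f$ and all $t$. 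The difficulty is that the class of $f$ with $\langle d, \nabla f \rangle \equiv 0$ on $\partial G$ is too thin to determine a measure; so I cannot conclude $\pi_t = \pi$ directly this way. Instead I would follow the Echeverria–Weiss scheme: fix $t > 0$, consider the Cesàro average $\bar\pi_T = \frac1T \int_0^T \pi_s\, ds$, and show that $\bar\pi_T$ (or a subsequential limit, using tightness from Assumption~\ref{ass:TF}) satisfies the same inequality \eqref{mono0} for all $-f \in \MH$ while also satisfying $\bar\pi_T(\partial G) = 0$; then an argument that the submartingale problem together with the inequality forces invariance.

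The cleanest logical backbone, and the one I expect the authors use, is: (i) construct, using $\pi$ and the well-posed submartingale family, a stationary version of the process by a compactness argument on path space — take initial laws $\pi$, run to time $n$, reverse or shift, extract a limit of $\frac1n\int_0^n \delta_{\text{shift}_s \omega}\, ds$-type averages, and get a shift-invariant probability measure $\mathbb{P}^*$ on $\ccspace$ whose one-dimensional marginal is some $\pi^*$; (ii) show $\mathbb{P}^*$ solves the submartingale problem with initial law $\pi^*$, using the submartingale property's stability under the averaging and the fact that Assumption~\ref{ass:TF} prevents loss of mass and Assumption~\ref{ass:V} (with the nondegeneracy $\nvec_x^T a(y)\nvec_x \ge \alpha_x$ and the inward-pointing condition $\langle \nvec_x, d\rangle \ge 0$) handles the behavior near $\MV$, in particular forces $\pi^*(\partial G) = 0$ and $\pi^*(\MV)=0$; (iii) invoke well-posedness: $\mathbb{P}^* = \Q_{\pi^*}$, hence $\pi^*$ is stationary; (iv) finally show $\pi^* = \pi$, by showing both satisfy the characterizing inequality and that the stationary distribution consistent with a given occupation-type functional is unique — here is where I would use that $\int \ML f\, d\pi = \int \ML f\, d\pi^* = 0$ on the symmetric class, plus uniqueness in law, to match them, or more simply run the chain from $\pi$ and use the already-established stationarity machinery.

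The main obstacle is step (ii)–(iii): verifying that the measure produced by the path-space averaging genuinely solves the submartingale problem, and in particular that no mass is lost to infinity (handled by the test functions of Assumption~\ref{ass:TF} and property~3 there, $|\ML f_{x,\ve}| \le C$, which gives a uniform drift bound away from a neighborhood of $x$) and that the occupation-time condition 3 in Definition~\ref{def-smg}, $\E[\int_0^\infty \ind_\MV(\omega(s))\, ds] = 0$, is preserved under the limit — this is exactly where Assumption~\ref{ass:V} enters, since the uniform ellipticity in the direction $\nvec_x$ near each $x \in \MV$, combined with the geometric wedge condition $\langle \nvec_x, y - x\rangle \ge \alpha_x |y-x|$ and $\langle \nvec_x, d\rangle \ge 0$, lets one build a local supersolution (an $f$ with $\ML f$ bounded below by a positive constant near $\MV$, analogous to the auxiliary function in the commented-out Assumption on $g_{\delta,\ve}$) certifying that the limiting process spends zero time on $\MV$. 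Once the limiting measure is shown to solve the submartingale problem, uniqueness does the rest; I would be careful that the averaging is done so that the limiting one-dimensional marginal is identifiably $\pi$ itself (e.g. by starting from $\pi$ and exploiting that $\int f\, d\pi_t = \int f\, d\pi$ already holds for the symmetric test class, which pins down $\pi^* = \pi$ after knowing $\pi^*$ is a stationary distribution and invoking ergodic-type uniqueness, or directly by a fixed-point/minimality argument).
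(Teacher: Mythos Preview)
Your proposal has a genuine architectural gap. The heart of Echeverria's method---and the paper's adaptation of it---is not a Ces\`aro or path-space averaging argument starting from $\Q_\pi$, but rather the construction, via Hahn--Banach and Riesz representation, of a family of discrete-time Markov chains on $\overline G$ each of which has $\pi$ \emph{exactly} as its stationary distribution. For each $\lambda>0$ one defines a sublinear functional $Q_{\lambda+}$ on $\C_b(\overline G\times\overline G)$ built from the hypothesis inequality, extends the linear functional $C\mathbf{1}\mapsto C$ to a positive linear functional dominated by $Q_{\lambda+}$, extracts a probability measure $\nu_\lambda$ on $\overline G\times\overline G$ with both marginals equal to $\pi$, and uses its regular conditional distribution as a transition kernel. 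The resulting Markov chain has $\pi$ stationary by construction and also satisfies a discrete submartingale inequality inherited from the functional-analytic setup. One then shows (using Assumptions~\ref{ass:TF} and~\ref{ass:V} in precisely the roles you correctly identified: tightness and the $\MV$-occupation estimate) that the continuous-time interpolations converge as $\lambda\downarrow 0$ to the unique solution $\Q_\pi$ of the submartingale problem, and stationarity of $\pi$ passes to the limit.

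Your scheme instead produces a shift-invariant limit with one-dimensional marginal $\pi^*$ and then attempts to show $\pi^*=\pi$. That identification is where the argument breaks: you yourself note that the class of $f$ with $\langle d,\nabla f\rangle\equiv 0$ on $\partial G$ is too thin to determine a measure, yet your step (iv) ultimately appeals to exactly this class (``both satisfy the characterizing inequality on the symmetric class, plus uniqueness in law''). The theorem does not assert uniqueness of stationary distributions, and the inequality \eqref{mono0} is a one-sided condition that many measures could in principle satisfy; invoking it to pin down $\pi^*=\pi$ is circular. The Hahn--Banach construction sidesteps this entirely by never introducing an auxiliary $\pi^*$: $\pi$ is built into every approximating chain from the outset, so once the chains are shown to converge to the submartingale solution there is nothing left to identify.
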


The necessity of the condition stated in Theorem
\ref{thm:SS} is, as shown above, straightforward.
On the other hand, the proof of  sufficiency, which is given in Section \ref{sec-proof1},
entails first constructing  
 a suitable sequence of Markov chains with stationary
distribution $\pi$ and then showing that the sequence converges to a solution
of the submartingale problem.
Both steps are more involved when the domain is not smooth, 
in part due to the difficulty of estimating the amount of time the reflected
diffusion spends near the boundary, especially in the neighborhood of
the set $\MV \subset \partial G$.  

We now introduce a broad class of data $(G, d(\cdot))$ and
 $\MV$ for which the stationary distribution characterization 
obtained in Theorem \ref{thm:SS} applies. 

\begin{defn} \label{def:A}
The pair $(G,d(\cdot))$ is said to be piecewise ${\cal C}^1$ with
continuous reflection if $G$ and $d(\cdot)$ satisfy the following
properties: 
\begin{enumerate}
\item[1.] The domain $G$ is a non-empty domain with
representation $ G=\bigcap_{i\in \MI}G_i$, where $\MI$ is a finite
index set and for each $i\in \MI$, $G_i$ is a non-empty domain with 
$\C^1$ boundary, that is, there exists a continuously differentiable function $\func^i$
on $\R^J$ such that 
\[   G_i = \{ x: \func^i(x) > 0 \} \quad \mbox{ and } \quad \partial
G_i = \{ x: \func^i(x) = 0 \}.  \]
  Let $n^i(x)$ denote the unit inward normal vector to
$\partial G_i$ at $x\in \partial G_i$  and
define
\[  \MI(x)  \doteq  \{i \in \MI:\ x\in \partial G_i\}, 
\]
and  for each $x\in \partial G$, let 
\[ n(x) \doteq   \left\{\sum_{i\in \MI(x)}s_in^i(x), s_i\geq 0,  i\in
  \MI(x)\right\}.
\]
\item[2.]  The direction vector field $d(\cdot)$ is given by
\[   d(x) \doteq   \left\{\sum_{i\in \MI(x)}s_i\gamma^i(x),\ s_i\geq 0, i\in
   \MI(x) \right\}, \qquad x \in \partial G,
\]
where
for each $i\in \MI$,  $\gamma^i(\cdot)$ is a continuous vector field
  associated with $G_i$  such that $\lan n^i(x),\gamma^i(x)\ran
 >0$ for each $x\in \partial G_i$.
\end{enumerate}
If, in addition,  $n^i(\cdot)$ is constant (so that the domain $G$ is a polyhedron)
and $\gamma^i(\cdot)$ is also constant, then the pair $(G,d(\cdot))$ will be said to be 
polyhedral with piecewise constant reflection. 
\end{defn}

Given $(G,d(\cdot))$ that is piecewise ${\cal C}^1$ with continuous
reflection,  we can assume, without loss of generality that
for each $i \in \MI$, 
\be
\label{normalize}
 \lan n^i(x),\gamma^i(x)\ran =1, \qquad \mbox{ for each }
x\in \partial G_i. 
\ee
In the analysis of reflected diffusions in non-smooth domains, 
a special role is played by the following set on the boundary: 
\be
\label{def-mu}
 \MU \doteq \{x \in \partial G: \exists n \in n(x)  \mbox{ such that }
  \lan n, d \ran > 0,\  \forall d \in d(x)\sm \{0\} \}. 
\ee

\noi 
{\bf Assumption 2'.}
$\MV$ is a finite set such that $\MV \supset \partial G \sm \MU$,  
and for each $x\in \MV$, there exist a unit vector $\nvec_x$,  
  constants $r_x>0$, $\alpha_x>0$ and $0<c^1_x<1< c_x^2<\infty$ such
  that for all $y\in \overline G \cap B_{r_x}(x)$, the following properties are
  satisfied with  $\Theta_x \doteq \{z\in \R^J:\ \lan z, \nvec_x \ran \geq
  0\}$: 
\begin{enumerate}
\item $\lan \nvec_x,y-x\ran \geq  \alpha_x|y-x|$; 
\item $\gamma^i(y)\in \newset_x$ for
  each $i\in \MI(y) \subset \MI(x)$;
\item
for every $r\in (0,r_x/c_x^2)$,
\end{enumerate}
\begin{eqnarray}
\label{include}
 \overline G \cap B_{c^1_xr}(x) & \subseteq &  \{y\in
\overline G\cap B_{r_x}(x):\ \dist(y,x+r \nvec_x+\Theta_x)>0\} 
\\
\nonumber
&\subseteq & \overline G \cap B_{c^2_xr}(x). 
\end{eqnarray}
\begin{enumerate}
\item[(4)]
$\nvec_x^T a (y) \nvec_x \geq \alpha_x$ for all $y \in
  B_{r_x}(x)$;  
\end{enumerate}

\begin{remark}
\label{rem-MV}
{\em  Note that 
Assumption 2' is trivially satisfied when $\partial G=\MU$, 
and $\MV = \emptyset$. 
The  condition that $\partial G = \MU$ 
ensures that at each $x \in \partial G$ there
  exists a normal vector $n \in   n(x)$ that makes a strictly positive inner
  product with all unit reflection vectors $d \in d(x)$.  
It can be viewed as 
a generalization of what is 
  known in the literature as the  completely-${\cal S}$
  condition. 
A canonical choice is to define $\MV$ to be 
equal to the set of points on $\partial G$ at which 
this condition fails to hold.  
In the context of certain polyhedral domains with piecewise constant
reflection fields, the stipulation  that the  
completely-${\cal S}$ condition hold has
  been shown to be necessary and sufficient for the associated
  reflected diffusion to be a semimartingale \cite{Ram06,
    ReiWil88,TaWi93}.  However,  in this work we also allow for cases when 
$\partial G \neq \MU$, 
thus providing a characterization of the stationary distribution for
  reflected diffusions that are not necessarily semimartingales
  \cite{BurKanRam08, KanRam10, Ram06}.
}
\end{remark}

We now state the second main result of this paper, whose proof is
given  in Section \ref{sec-proof2}.

\begin{theorem}  \label{thm:test}  Suppose that either $(G,d(\cdot))$ is  
  piecewise 
${\cal C}^1$ with continuous reflection and $G$ is bounded, 
or 
$(G, d(\cdot))$ is polyhedral with piecewise constant reflection. 
If Assumption 2' is satisfied and 
 the drift   and dispersion coefficients 
  $b(\cdot)$ and  $\sigma(\cdot)$ are uniformly
  bounded,   then $(G,d(\cdot))$ and $\MV$ satisfy Assumptions
  \ref{ass:TF} and \ref{ass:V}.  
\end{theorem}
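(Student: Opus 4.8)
The plan is to verify Assumptions~\ref{ass:V} and~\ref{ass:TF} separately: the former is essentially a restatement of Assumption~2', whereas the latter requires constructing the family of test functions by hand.

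\smallskip
\emph{Assumption~\ref{ass:V}.} Given $x\in\MV$, I would take $\nvec_x,r_x,\alpha_x$ as in Assumption~2'. Properties~(1) and~(4) there are precisely the inequalities $\lan\nvec_x,y-x\ran\ge\alpha_x|y-x|$ and $\nvec_x^Ta(y)\nvec_x\ge\alpha_x$ required by Assumption~\ref{ass:V}. For the remaining condition, by continuity of the defining functions $\func^i$ of Definition~\ref{def:A} one may, after shrinking $r_x$, assume $\MI(y)\subseteq\MI(x)$ for all $y\in\overline G\cap B_{r_x}(x)$; then property~(2) of Assumption~2' gives $\gamma^i(y)\in\newset_x=\{z:\lan z,\nvec_x\ran\ge0\}$ for each $i\in\MI(y)$, and since $d(y)$ is the nonnegative cone generated by $\{\gamma^i(y):i\in\MI(y)\}$, it follows that $d(y)\subseteq\newset_x$, i.e.\ $\lan\nvec_x,d\ran\ge0$ for all $d\in d(y)$. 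This gives Assumption~\ref{ass:V}.

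\smallskip
\emph{Assumption~\ref{ass:TF}.} Fix $\ve>0$ and $N\in\N$. I would take $f_{x,\ve}=\sum_\alpha g_\alpha$, a countable sum (finite when $G$ is bounded) of nonnegative functions $g_\alpha\in\MH\cap\C^2_c(\overline G)$ chosen so that: each $g_\alpha$ has first and second partial derivatives bounded by a constant depending only on $N,\ve$; at most boundedly many $g_\alpha$ are nonzero at any point of $\overline G$; $\supp g_\alpha\cap B_{2\ve}(x)=\emptyset$; and $\sum_\alpha g_\alpha\ge c(N,\ve)>0$ on $\overline G\setminus B_{3\ve}(x)$. Granting this, property~1 of Assumption~\ref{ass:TF} is immediate; property~2 holds because $f_{x,\ve}\equiv0$ on $B_{\ve/2}(x)$ and $f_{x,\ve}\ge c(N,\ve)$ off $B_{3\ve}(x)$; $f_{x,\ve}\in\C^2_b(\overline G)$ by the derivative bounds together with the bounded overlap; and since $b$ and $a=\sigma\sigma^T$ are uniformly bounded the same bounds yield $|\ML f_{x,\ve}|\le C(N,\ve)$, which is property~3. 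To obtain such a family I would cover $\overline G$ with bounded overlap by balls of radius $\le\ve/10$ (and at most a fixed geometric scale), discard those meeting $B_{2\ve}(x)$, attach to each remaining ball a building block as below, and arrange the cover so that every $y$ with $|y-x|>3\ve$ lies in a region on which the attached block is $\ge c(N,\ve)$.

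\smallskip
\emph{Building blocks.} There are three cases according to the center $z$ of the ball $B$. (i)~If $\overline B\subset G^\circ$, take $g_B$ a mollified bump supported in $B$ and equal to a positive constant on the concentric half ball; since $\nabla g_B\equiv0$ near $\partial G\supseteq\MV$, one has $g_B\in\MH$ trivially. (ii)~If $z\in\partial G\setminus\MV$, then $z\in\MU$ by Remark~\ref{rem-MV}, so there is $n=\sum_{i\in\MI(z)}s_in^i(z)\in n(z)$ with $\lan n,\gamma^j(z)\ran>0$ for all $j\in\MI(z)$; taking $B$ small enough that $B\cap\MV=\emptyset$ and $\MI(y)\subseteq\MI(z)$ for $y\in\overline G\cap B$, and using continuity (constancy, in the polyhedral case) of the $n^i(\cdot)$ and $\gamma^i(\cdot)$, I would build a nonnegative $g_B\in\C^2_c(\overline G)$ which near $\partial G$ depends only on the affine coordinate $\lan n,\cdot-z\ran$ and increases from $0$ up to a positive constant (so that $\lan\gamma^j(y),\nabla g_B(y)\ran\le0$ there, hence $\lan d,\nabla g_B\ran\le0$ on $d(y)$), and which is then brought smoothly down to $0$ in the interior of $G$ where no constraint is active; so $g_B\in\MH$. (iii)~If $z\in\MV$, fix $r\le\ve/10$ small enough that Assumption~2' applies on $B_r(z)$ with $B_r(z)\cap\MV=\{z\}$; then $d(y)\subseteq\newset_z=\{w:\lan w,\nvec_z\ran\ge0\}$ for $y\in\overline G\cap B_r(z)$ by property~(2), and I would build a nonnegative $g_B\in\C^2_c(\overline G)$ supported in $B_r(z)$ which near $\partial G$ is a nonincreasing function of $\lan\nvec_z,\cdot-z\ran$ (whence $\lan d,\nabla g_B\ran\le0$ on $d(y)$), equals a positive constant where $\lan\nvec_z,\cdot-z\ran$ is small---hence, by property~(1) and the inclusions~(\ref{include}), on a neighborhood of $z$---and is brought down to $0$ away from $\partial G$; so $g_B$ is constant near $z$, vanishes near the remaining points of $\MV$, and lies in $\MH$. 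In each case the derivative bounds are controlled by a fixed power of $1/\ve$ times constants depending only on finitely many local configurations (finitely many up to translation in the polyhedral case), uniformly in the base point, which supplies the uniformity over $x\in\overline G\cap B_N(0)$.

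\smallskip
\emph{Where the difficulty lies.} The hard part is the construction of the boundary blocks (ii) and (iii): a nonnegative, compactly supported $\C^2$ function that is bounded below on a prescribed region yet whose gradient, wherever nonzero along $\partial G$, lies in the polar cone of the local reflection directions and which is locally constant near $\MV$. This forces those pieces to reduce, near $\partial G$, to monotone functions of a single reflection-adapted coordinate rather than radial bumps, and one must check that the assembled sum nonetheless vanishes only on $B_{\ve/2}(x)$ while remaining $\ge c(N,\ve)$ everywhere else on $\overline G$---including on $\partial G$ near $\MV$, where construction~(ii) degenerates---so that the positive regions of neighbouring blocks must be made to overlap appropriately. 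A further point is making every constant uniform over an unbounded polyhedral domain, which requires reducing via translation invariance of the piecewise constant data to finitely many model configurations; this is where the hypothesis that $(G,d(\cdot))$ be polyhedral with piecewise constant reflection---rather than merely piecewise $\C^1$---is needed when $G$ is unbounded.
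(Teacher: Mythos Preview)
Your treatment of Assumption~\ref{ass:V} and the overall architecture for Assumption~\ref{ass:TF}---writing $f_{x,\ve}$ as a (locally finite) sum of nonnegative building blocks in $\MH\cap\C^2_c(\overline G)$, with three cases according to whether the center lies in $G$, in $\partial G\setminus\MV$, or in $\MV$---is exactly the route the paper takes. Cases~(i) and~(iii) are also essentially correct and agree with the paper's constructions: in case~(iii) the key point, which you use, is that Assumption~2'(1) forces $\{y\in\overline G:\lan\nvec_z,y-z\ran\le c\}$ to lie in $B_{c/\alpha_z}(z)$, so sublevel sets of the single affine coordinate are already bounded in $\overline G$ and compact support comes for free.

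The gap is in case~(ii). First, a sign slip: if near $\partial G$ you set $g_B=\psi(\lan n,\cdot-z\ran)$ with $\psi$ \emph{increasing} and $n\in n(z)$ chosen so that $\lan n,\gamma^j(z)\ran>0$, then $\nabla g_B=\psi'\,n$ with $\psi'\ge0$ and hence $\lan\gamma^j(y),\nabla g_B(y)\ran\ge0$, not $\le0$; you would need $\psi$ non\emph{increasing} in the inward-normal coordinate to land in $\MH$. Second---and this is the real obstruction---even with the corrected monotonicity, an affine-coordinate block cannot simultaneously be compactly supported in $\overline G$ and positive at $z$ when $z$ is a smooth (non-$\MV$) boundary point. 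On the face through $z$ one has $\lan n,y-z\ran=0$, so $g_B\equiv\psi(0)$ along that entire face; if $\psi(0)>0$ the support is not compact, while if $\psi(0)=0$ the block vanishes at $z$ and contributes nothing to the lower bound there. Any attempt to cut off tangentially introduces gradient components along $\partial G$ that are not controlled by the sign of $\lan n,\gamma^j\ran$, so membership in $\MH$ is lost. Note the contrast with case~(iii): there Assumption~2' supplies exactly the ``pointedness'' that makes an affine coordinate localize; at generic boundary points no such vector exists.

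The paper resolves case~(ii) by abandoning affine coordinates altogether. For $z\in\partial G\setminus\MV$ it builds, from the \emph{reflection} directions (not the normals), the compact convex set $K_z=\{-\sum_{i\in\MI(z)}a_i\gamma^i(z):a_i\ge0,\sum a_i=1\}$, fattens it to a smooth body $K_{z,\delta_z}$, and forms the truncated cone $L_{z,\delta_z}=\cup_{t\le R_z/2}\,tK_{z,\delta_z}$, which by the $\MU$-condition satisfies $(z+L_{z,\delta_z})\cap\overline G=\{z\}$. The building block is then $g_{z,r}=\zeta_z\!\circ k_{z,r}$, where $k_{z,r}$ is a mollified distance to a small translate of $rL_{z,\delta_z}$ and $\zeta_z$ is a fixed decreasing cutoff. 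Because the level sets of the distance to a cone curve around $z$, this gives compact support in $B_r(z)\cap\overline G$; and because the gradient of the distance function at $y$ points from the nearest cone point toward $y$, one gets $\lan\nabla k_{z,r}(y),\gamma^i(y)\ran\ge\theta/r>0$, whence $\lan d,\nabla g_{z,r}(y)\ran\le0$. This is the step that requires genuine work (Lemma~\ref{prop:TF} and Proposition~\ref{prop:TF2} in the appendix), and it is precisely the ``hard part'' you flagged but did not carry out; your affine-coordinate proposal does not survive it.
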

\noi 
As an immediate consequence we see that Theorem
\ref{thm:SS} can be used to characterize the stationary distributions 
of reflected diffusions that satisfy the conditions of Theorem
\ref{thm:test}  and are associated with well-posed submartingale 
problems.  As shown in Section \ref{subs-egs}, 
this includes many classes of reflected diffusions that arise in applications.   
But Theorem \ref{thm:SS} is also applicable to reflected diffusions
outside the class described by Theorem \ref{thm:test}.   An illustrative example is given 
in Example \ref{ex:cusp} of Section \ref{subs-egs}.

To state the last main  result of this paper, we introduce $\ML^*$,  the
adjoint operator to $\ML$: for $p \in \C^2(\overline G)$, 
\[\ML^*p(x)=\frac{1}{2}\sum_{i,j = 1}^J \frac{\partial^2}{\partial
x_i\partial x_j}(a_{ij}(x)p(x))-\sum_{i=1}^J \frac{\partial}{\partial
x_i}(b_i(x)p(x)). \]
We now show that nonnegative and integrable solutions of the so-called basic adjoint relation
(BAR) are indeed stationary distributions for the submartingale
problem.
  In what follows, let ${\mathcal S}$ denote the smooth
parts of the boundary $\partial G$.

\begin{corollary} \label{Thm:SBR}
Suppose that the pair $(G,d(\cdot))$  is piecewise ${\cal C}^1$ with
continuous 
reflection,  (\ref{normalize}) is satisfied, $\MV \subset \partial G$
satisfies 
Assumption 2' 
and $b_i(\cdot),
a_{ij}(\cdot)\in \C^2(\overline G)$ for $i, j  = 1, \ldots, J$, and
the submartingale problem associated with 
$(G,d(\cdot))$ and $\MV$ is well posed. 
Furthermore, suppose there exists a non-negative
function $p\in \C^2(\overline G)$ that  satisfies $\int_{\overline G}
p(x) dx <\infty$ and the following partial differential equation with boundary conditions:
\begin{enumerate}
\item[1.] $\ML^*p (x) =0$ for  $x \in G$;
\item[2.] for each $i\in \MI$ and $x\in \partial G_i\cap {\mathcal S}$,
\begin{eqnarray*} && -2p(x)\left<n^i(x),b(x)\right>+(n^i(x))^Ta(x)\nabla p(x)+p(x)K_i(x) \\ && \qquad
-\nabla\cdot(p(x)(n^i(x))^Ta(x)n^i(x)\gamma^i(x)-p(x)a(x)n^i(x))=0, \end{eqnarray*} where
\[ K_i(x) \doteq \left<n^i(x),\sum_{j=1}^J \frac{\partial a_{\cdot
j}(x)}{\partial x_j}\right> = \sum_{k=1}^J n^i_k (x) \sum_{j=1}^J
\frac{\partial a_{kj}}{\partial x_j} (x);
\]
\item[3.] for each $i, j \in \MI$, $i \neq j$ and $x\in \partial G_i\cap \partial G_j \cap \partial G\setminus \MV$,
\begin{eqnarray*}
&& p(x)\left(\left<n^j(x),(n^i(x))^Ta(x)n^i(x)\gamma^i(x)-a(x)n^i(x)\right> \right. \\ && \qquad +\left.
\left<n^i(x),(n^j(x))^Ta(x)n^j(x)\gamma^j(x)-a(x)n^j(x)\right>\right)=0.
\end{eqnarray*}
\end{enumerate}
Then the probability measure on $\overline G$ defined by
\be
\label{def-corpi}
\pi(A) \doteq \ds \dfrac{\int_A p(x)dx}{\int_{\overline G} p(x) dx},
\qquad   A\in \MB(\overline G),
\ee
 is a stationary distribution for the well posed submartingale problem.
\end{corollary}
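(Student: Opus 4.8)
The plan is to deduce Corollary \ref{Thm:SBR} from Theorem \ref{thm:SS} by verifying that the probability measure $\pi$ defined in (\ref{def-corpi}) satisfies the hypotheses of that theorem, namely that $\pi(\partial G)=0$ and that the inequality (\ref{mono0}) holds for every $f$ with $-f\in\MH$. The first of these is immediate: $\pi$ has a density $p$ with respect to Lebesgue measure on $\R^J$, and $\partial G$, being contained in a finite union of $\C^1$-hypersurfaces $\partial G_i$, has Lebesgue measure zero, so $\pi(\partial G)=0$. The substance is therefore to show that $\int_{\overline G}\ML f(x)p(x)\,dx\le 0$ for all $f\in\C^2_c(\overline G)\oplus\R$ that are constant in a neighborhood of $\MV$ and satisfy $\lan d,\nabla f(y)\ran\ge 0$ for all $d\in d(y)$, $y\in\partial G$. (Adding a constant to $f$ changes $\ML f$ by zero, so we may as well take $f\in\C^2_c(\overline G)$.)

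The core step is an integration-by-parts / divergence-theorem computation. Starting from $\int_{\overline G}\ML f\,p\,dx$, I would integrate by parts twice to move all derivatives off $f$ and onto the coefficients and $p$; the interior terms assemble into $\int_G f(x)\,\ML^* p(x)\,dx$, which vanishes by boundary condition (1), while each application of the divergence theorem on the domain $G=\bigcap_{i}G_i$ produces boundary integrals over the smooth faces $\partial G_i\cap\MS$ (with outward normal $-n^i$) plus lower-dimensional contributions supported on the edges $\partial G_i\cap\partial G_j$. Because $f$ is constant — hence has vanishing tangential derivative and its normal derivative is controlled — near $\MV$, and because $\MV\supset\partial G\sm\MU$ (Assumption 2') is exactly the exceptional set, the boundary integrals are effectively taken over the part of the boundary where things are smooth and where the reflection field is well behaved. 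On each face $\partial G_i\cap\MS$, I would use the reflection-field constraint on $f$: writing $\gamma^i$ for the (normalized, by (\ref{normalize})) reflection direction, the hypothesis $\lan d,\nabla f\ran\ge 0$ for all $d\in d(y)$ gives in particular $\lan\gamma^i(y),\nabla f(y)\ran\ge 0$ on $\partial G_i$. The surface integrand on $\partial G_i\cap\MS$, after using the face-PDE boundary condition (2) to eliminate the terms involving $\nabla p$ and $K_i$, should reduce to a nonpositive multiple of $\lan\gamma^i(y),\nabla f(y)\ran$ times $p(y)\,\lan n^i(y),a(y)n^i(y)\ran$ (which is $\ge 0$ since $a$ is nonnegative definite and $p\ge 0$); thus each face contributes a nonpositive quantity. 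The edge terms along $\partial G_i\cap\partial G_j\cap\partial G\sm\MV$ are designed to cancel by boundary condition (3); the edge terms that touch $\MV$ vanish because $f$ is constant there (so $\nabla f$ applied to the relevant tangential/reflection directions is zero in a neighborhood of $\MV$, and the support of $f$ meets $\MV$ trivially in the sense that $f$ is locally constant). Summing, $\int_{\overline G}\ML f\,p\,dx\le 0$, so (\ref{mono0}) holds, and since the submartingale problem is well posed and Assumptions \ref{ass:TF}, \ref{ass:V} hold (the latter being implied by Assumption 2' together with the regularity of $(G,d(\cdot))$, as in Theorem \ref{thm:test}), Theorem \ref{thm:SS} yields that $\pi$ is a stationary distribution.

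The main obstacle is the bookkeeping of the integration by parts on the non-smooth domain: keeping track of exactly which boundary and edge terms arise, verifying that the edge contributions near $\MV$ genuinely drop out using only that $f$ is locally constant there (and that $\MV$ is finite, hence lower-dimensional), and — most delicately — checking that the surface integrand on each smooth face $\partial G_i\cap\MS$ collapses, after substituting boundary condition (2), precisely to a term of a definite (nonpositive) sign rather than an indefinite one. This requires expanding $\nabla\cdot\bigl(p\,\lan n^i,a n^i\ran\gamma^i - p\,a n^i\bigr)$ carefully and matching it against the face integrand coming from the double integration by parts; I expect the normalization (\ref{normalize}) and the $\C^2$-regularity of $a_{ij}$ and $b_i$ to be used exactly here to make the algebra close. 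A secondary technical point is justifying the integrability needed to apply the divergence theorem (finiteness of $\int_{\overline G}p\,dx$ handles the interior, and $f\in\C^2_c$ localizes everything to a compact set, so this is routine but should be stated).
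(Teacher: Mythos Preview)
Your proposal is correct and follows essentially the same route as the paper. The paper streamlines your ``integrate by parts twice'' into a single Lagrange-type identity $p\,\ML f - f\,\ML^*p = \tfrac12\nabla\cdot H$ followed by the divergence theorem, and then---exactly as you anticipate---uses the normalization $\lan n^i,\gamma^i\ran=1$ to show the vector field $(n^i)^Ta\,n^i\,\gamma^i - a\,n^i$ is tangent to $\partial G_i$, so that a second application of the divergence theorem on the $(J{-}1)$-dimensional face pushes the remaining terms onto the edges $\partial G_i\cap\partial G_j$, where condition (3) is verified via the explicit formulas $n^{ij}=(n^j-\lan n^i,n^j\ran n^i)/\sqrt{1-\lan n^i,n^j\ran^2}$.
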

\proof By Theorems \ref{thm:SS} and \ref{thm:test}, it suffices to show that the
probability measure $\pi$ defined
in terms of $p$ via
 (\ref{def-corpi}) satisfies the inequality (\ref{mono0}) for all
 functions $f$ such that $-f\in \MH\cap \C^2_c(\overline G)$.  For any
 such function $f$,  straightforward calculations show that for each $x\in \overline G$,
\begin{eqnarray*} p(x)\ML f(x)-f(x)\ML^*p(x)
=\frac{1}{2}\nabla \cdot H(x),\end{eqnarray*} where $H = H^f$ is the vector
field such that its $i$th component is given by
\begin{eqnarray*}
H_i(x) & = & \sum_{j=1}^J\left(p(x)a_{ij}(x)\frac{\partial
f(x)}{\partial x_j}  -f(x)a_{ij}(x)\frac{\partial p(x)}{\partial
x_j}-f(x)p(x)\frac{\partial a_{ij}(x)}{\partial
x_j}\right)\\
&& \qquad +2b_i(x)f(x)p(x).
\end{eqnarray*}
Since $\ML^* p (x)= 0$ for  $x \in G$, the Divergence
Theorem  implies that
\begin{eqnarray} \label{statd}\int_{\overline G} \ML f(x) p(x)dx &=&
  \frac{1}{2} \int_{\partial G} \lan n(x), H(x) \ran\mu(dx) \\ & = &
-\frac{1}{2}\sum_{i \in \MI} \int_{\partial G_i\cap \partial G}
\left<n^i(x), H(x) \right> d\mu_i(x), \nonumber
\end{eqnarray}
where $n(\cdot)$ is the outward pointing unit normal field on $\partial G$, $\mu(dx)$ is the surface measure on $\partial G$, and $\mu_i(dx)$ is the surface measure on $\partial G\cap \partial G_i$ for each $i\in \MI$.
Note that for each $i\in \MI$ and $x\in \partial G\cap \partial G_i$,
\begin{eqnarray*}
 \left<n^i(x), H(x) \right>  &=& \quad p(x)(n^i(x))^Ta(x)\nabla f(x)
 -f(x)(n^i(x))^Ta(x)\nabla p(x)\\ && \qquad \qquad-f(x)p(x)K_i(x) +
 2f(x)p(x)\left<n^i(x),b(x)\right>.
\end{eqnarray*}
Combining the above display, (\ref{statd}) and condition 2 in the
theorem, we obtain
\[
\begin{array}{l} 
\ds \int_{\overline G} \ML f(x) p(x)dx  \\  =
\ds - \frac{1}{2}\sum_{i\in \MI} \int_{\partial G_i\cap \partial G}
p(x)(n^i(x))^Ta(x)\nabla f(x) d\mu_i(x) \\
 \ds \qquad  +\frac{1}{2} \sum_{i\in \MI} \int_{\partial G_i\cap \partial
   G} f(x) \nabla\cdot(p(x)(n^i(x))^Ta(x)n^i(x)\gamma^i(x) -p(x)a(x)n^i(x)) d\mu_i(x).
\end{array}
\]
Note that for each $i\in \MI$ and $x\in \partial G_i\cap \partial G$,
\begin{eqnarray*}
&& \nabla\cdot(f(x)p(x)[(n^i(x))^Ta(x)n^i(x)\gamma^i(x)-a(x)n^i(x)]) \\ && \quad =f(x)\nabla\cdot(p(x)[(n^i(x))^Ta(x)n^i(x)\gamma^i(x)-a(x)n^i(x)])\\ && \qquad + \lan \gamma^i(x),\nabla f(x)\ran p(x) (n^i(x))^Ta(x)n^i(x) -p(x) (\nabla f(x))^Ta(x)n^i(x).
\end{eqnarray*}
It follows from the last two equalities that
\begin{eqnarray*}  && \int_{\overline G} \ML f(x) p(x)dx \\ \ &=&
\frac{1}{2}\sum_{i \in \MI} \int_{\partial G_i\cap \partial G}
\nabla\cdot(f(x)p(x)((n^i(x))^Ta(x)n^i(x)\gamma^i(x)-a(x)n^i(x)))
d\mu_i(x) \nonumber \\ &&  \quad -\frac{1}{2} \sum_{i \in \MI} \int_{\partial G_i\cap \partial G} \lan \gamma^i(x),\nabla f(x)\ran p(x) (n^i(x))^Ta(x)n^i(x) d\mu_i(x). \nonumber
\end{eqnarray*}
The second term on the right-hand side of the above equality is non-positive since $-f\in \MH$, $p\geq 0$ and $a$ is positive semidefinite.
So, we shall focus on the first term on the right-hand side of the above display.
Now, for each $x\in \partial G_i\cap \partial G$,
$\lan n^i(x),(n^i(x))^Ta(x)n^i(x)\gamma^i(x)$ $-a(x)n^i(x))\ran$ $=0$
because of the assumed normalization $\lan n^i(x), \gamma^i(x) \ran =
1$.  Therefore,  the vector
$(n^i(x))^Ta(x)n^i(x)\gamma^i(x)-a(x)n^i(x)$  is parallel to $\partial
G_i$ at $x$, and the
divergence in the first term on the right-hand side of the above
display is equal to the
divergence taken in the $(J-1)$-dimensional manifold $\partial G_i\cap \partial G$.
Another application of the Divergence Theorem  then yields
\[  \begin{array}{l}
-\ds \sum_{i \in \MI} \int_{\partial G_i\cap \partial G}
\nabla\cdot(f(x)p(x)((n^i(x))^Ta(x)n^i(x)\gamma^i(x)-a(x)n^i(x)))
d\mu_i(x) \\  \ds =
\sum_{i,j \in \MI, i \neq j}\int_{F_{ij} \sm \MV}
f(x)p(x)\left<n^{ij}(x),(n^i(x))^Ta(x)n^i(x)\gamma^i(x)-a(x)n^i(x)\right>d\mu_{ij}(x),
\end{array}
\]
 where $F_{ij} \doteq \partial G_i\cap
\partial G_j\cap \partial G$,
$n^{ij}(x)$ denotes the unit vector that is
normal to both $F_{ij}$  and $n^i(x)$ at $x$ and points
into $\partial G_i\cap {\mathcal S}$ from $F_{ij}$, and $\mu_{ij}(dx)$ is the
surface measure on the $(J-2)$-dimensional manifold $F_{ij}$.
To prove the theorem, it suffices to show that the last equality in
the above display is zero.  To do this, it suffices to show that for
each $i,j\in \MI$ with $i\neq j$ and $x\in F_{ij}
\setminus \MV$,
\begin{eqnarray} \label{statd2}
&& p(x)\left(\left<n^{ij}(x),(n^i(x))^Ta(x)n^i(x)\gamma^i(x)-a(x)n^i(x)\right>\right. \\ && \qquad \left.+ \left<n^{ji}(x),(n^j(x))^Ta(x)n^j(x)\gamma^j(x)-a(x)n^j(x)\right>\right)=0. \nonumber
\end{eqnarray}
Since $n^{ij}(x)$ is normal to $\partial G_i\cap
\partial G_j$ at $x \in \partial G_i\cap \partial G_j$,  it must lie in the two dimensional space spanned by $n^i(x)$ and $n^j(x)$. In addition, $n^{ij}(x)$ is a unit vector normal to $n^i(x)$ and points into $\partial G_i$ from $\partial G_i\cap
\partial G_j$. Then we have \[n^{ij}(x)=(n^j(x)-\lan n^i(x),n^j(x)\ran n^i(x))/(1-\lan n^i(x),n^j(x) \ran^2)^{1/2}\] and \[n^{ji}(x)=(n^i(x)-\lan n^i(x),n^j(x)\ran n^j(x))/(1-\lan n^i(x),n^j(x) \ran^2)^{1/2}.\]
With the above two representations, we see that (\ref{statd2}) is
equivalent to condition (3) in the theorem. This yields the desired
result.  \endproof
\bigskip

\section{Examples}
 \label{subs-egs}

In this  section, we provide several examples of reflected
diffusions in general domains for which the submartingale
problem is well posed and Assumptions \ref{ass:TF} and \ref{ass:V} are
satisfied, so that Theorem \ref{thm:SS} provides a characterization
of their stationary distributions.   The examples serve to illustrate 
the range of applicability of the main result of the paper.  In the first three examples, the
reflected diffusions are semimartingales, whereas in the last two
examples they fail to be semimartingales,  two of the examples deal
with curved domains and three of the examples are multi-dimensional. 
  For simplicity, in all cases  we assume that $b$ and 
$\sigma$ are continuous and uniformly bounded and that when 
${\mathcal V} \neq \emptyset$,  $a$ is uniformly elliptic (although only 
the partial uniform ellipticity in the direction $v_x$ of Assumption
\ref{ass:V} is actually required).

We start with the simple case of smooth domains.

\begin{example}
\label{ex-zero}
{\em  Let $G$ be a bounded open set in $\R^J$ such that $G=\{x\in \R^J:\ \phi(x)>0\}$, where $\phi\in \C^2_b(\R^J)$ and $|\nabla \phi|\geq 1$ on $\partial G$. Thus, $\nabla \phi(x)$ is the
inward normal vector  at $x \in \partial G$. Let
$\gamma(\cdot)$ be a bounded Lipschitz continuous vector field that satisfies $\lan \nabla \phi (x), \gamma(x)  \ran > 0$ on $\partial G$,
and let $\MV = \emptyset$. Then it trivially follows that
$(G,d(\cdot))$ is a ${\cal C}^1$ domain with
continuous reflection and, since $\MU = \partial G$, Assumption 2' is trivially
satisfied with $\MV = \emptyset$.   By \cite{StrVar71} (see Theorems
3.1 and 5.4 therein) 
the associated submartingale problem is well posed. Moreover, since $G$ is bounded, Theorem \ref{thm:test}  shows that Assumptions \ref{ass:TF} and
\ref{ass:V} are satisfied (the latter holding trivially).
 }
\end{example}

\begin{example}
\label{ex-one}
{\em Consider a two-dimensional wedge $G$ given in polar coordinates
  by \[G=\{(r,\theta):\ 0\leq \theta\leq \zeta,\ r\geq 0\},\] where
  $\zeta\in (0,\pi)$ is the angle of the wedge.  Then $G$ admits the
  representation $G = G_1\cap G_2$, where $G_1$ and $G_2$ are the two half
  planes
\begin{eqnarray*}
G_1 & = & \{(r,\theta):\ 0\leq \theta\leq \pi,\
r\geq 0\}, \\
G_2 & = & \{(r,\theta):\ \zeta-\pi\leq \theta\leq
\zeta,\ r\geq 0\}.
\end{eqnarray*}
   Let the directions of reflection
on $\partial G_1$ and $\partial G_2$ be specified as constant vectors
$\gamma^1$ and $\gamma^2$ normalized such that for $j=1,2$, $\lan
\gamma^j,n^j\ran =1$. For $j=1,2$, define the angle of reflection
$\theta_j$ to be the angle between $n^j$ and $\gamma^j$, such that
$\theta_j$ is positive if and only if $\gamma^j$ points towards the
origin. Note that $-\pi/2<\theta_j<\pi/2$. Define
$\alpha=(\theta_1+\theta_2)/\zeta$.  
It was proved in \cite{VarWil85} that the submartingale problem is well posed when $\alpha<2$.

Let $\MV  = \{0\}$.  
For every $\alpha < 2$, it is easily verified that $\MU$ contains $\partial G\sm \{0\}$, and so $\MV$
satisfies the 
 first property stated in  Assumption 2'.  
In fact, when $\alpha < 1$,  $\MU = \partial G$  and so 
 there exists a positive combination $n_0$ of $n^1$ and $n^2$ such that
 $\lan n_0,\gamma^j \ran >0$ for $j=1,2$. Since $\zeta\in (0,\pi)$, it
 is obvious that $\lan n_0, x \ran >0$ for each $x\in \overline G$. 
On the other hand, when $\alpha = 1$,  $\gamma^1$ and
$\gamma^2$ are pointing towards each other.  In this case, $G$ lies on  one side of the line through the
vertex that is parallel to $\gamma^1$ and $\gamma^2$. Let $n_0$ be a
unit vector perpendicular to the line that points into 
the half-space that contains $G$, that is, let $\lan n_0, x \ran > 0$
for each $x\in \overline G\sm \{0\}$.
In either case, let $\newset (0)=\{y\in \R^2:\ \lan n_0,y\ran \geq
0\}$. 
It can be verified that the properties of Assumption 2' hold with the choice of $r_0=\infty$, $\alpha_0=\inf\{\left<n_0,x\right>:\ |x|=1, x\in \overline G\}>0$, $ c_0^1=1/2$ and $c_0^2=\max\{1/\sin(\zeta+\theta_1), 1/\sin(\theta_1)\}$. Thus, $\MV = \{0\}$ satisfies Assumption 2'. 
 Since $(G, d(\cdot))$ is polyhedral with constant reflection,  Assumptions \ref{ass:TF} and \ref{ass:V} hold by Theorem \ref{thm:test}.
}
\end{example}

\begin{example}
\label{ex-orthant}
{\em We now describe another class of RBMs in the orthant that
arise as diffusion approximations of queueing networks \cite{ReiWil88}
and  of Leontief systems in economics \cite{Ramsub00}.
In this case, $G = \R_+^J$ is the non-negative orthant in $\R^J$,
which admits the representation $G = \bigcap_{i=1}^J G_i$, where
$G_i \doteq \{x \in \R^J: x_i \geq 0\}$, and the direction vector
field $\gamma^i$ on $G_i$ is a constant vector field, pointing in a  direction
$d^i \in \R^J$.   Moreover, the matrix $D$ with column $d^i$ is
assumed to satisfy the
completely-${\cal S}$ condition, which implies that $\MU = \partial
G$.  Thus, by Remark \ref{rem-MV}, Assumption 2' is trivially
satisfied with $\MV  = \emptyset$. 
In this case, it was shown in  \cite{TaWi93} that 
 the stochastic differential equation
with reflection associated with  $G$ and $d(\cdot)$ admits a weak
solution that is unique in law.  By Theorem 2 of
\cite{KanRam11b} it then follows that the submartingale problem 
is well posed. Moreover, since $(G,d(\cdot))$ is polyhedral, once again Theorem \ref{thm:test} shows that
Assumptions \ref{ass:TF} and \ref{ass:V} are satisfied.  
}
\end{example}

\begin{example} \label{ex:GPS} {\em Generalized processor sharing (GPS) is a service
discipline used in high-speed networks that allows for  efficient
sharing of a single resource amongst traffic of different classes.
It was shown in \cite{dupram2,dupram4,dupram5,RamRei03} that the GPS discipline can be
modelled in terms of an associated (extended) Skorokhod problem or,
equivalently, a submartingale problem.   We now introduce a class of
RBMs that were shown  in \cite{RamRei03} and
\cite{RamRei08}  to arise as reflected diffusion approximations of
multiclass queueing
networks using this scheduling discipline.  The two-dimensional case
also corresponds to the case $\alpha = 1$ and $\pi = 90^\circ$ in Example \ref{ex-one}.

The state space $G$ associated with the GPS
ESP has the representation \[ G=\bigcap_{i=1}^{J+1}\{x\in \R^J:\
\left<x,n^i\right>>0\},\] where $n^i = e_i$ for $i = 1, \ldots, J$
(here $\{e_i, i = 1, \ldots, J\}$ is the standard orthonormal basis
in $\R^J$) and $n_{J+1} = \sum_{i=1}^J e_i/\sqrt{J}$. The reflection
vector field is piecewise constant on each face, governed by the
vectors
 $\{\gamma^i,\ i=1,...,J+1\}$ that are defined as follows:
$\gamma^{J+1} = \sum_{i=1}^J e_i/\sqrt{J}$ and $\{\gamma^i, i = 1,
\ldots, J\}$ are defined in terms of a ``weight'' vector
$\bar{\alpha} \in \R^J_+$ that satisfies $\bar \alpha_i>0$ for each
$i=1,...,J$ and $\sum_{i=1}^J \bar{\alpha}_i = 1$: for $i,j = 1,
\ldots, J$,
\[
\gamma^{i}_j = \left\{ \ba{rl}
-\dfrac{\bar{\alpha}_j}{1 - \bar{\alpha}_i} & \mbox{ for } j \neq i, \\
1 & \mbox{ for } j = i. \ea \right.
\]
It is easily verified that the generalized completely-${\cal S}$
condition is satisfied at all points in $\partial G \sm \{0\}$ (see Lemma 3.4  of
\cite{Ram06}).   Therefore,  $\MV = \{0\}$ satisfies the first
property in Assumption 2'. 
 Moreover, note that $d(0)=\{x\in \R^J:\ \lan x, d_{J+1} \ran \geq
 0\}$.
Choose $n_0=d_{J+1}$, $r_0=\infty$, $\alpha_0=1$, $c_0^1=1$, and $c_0^2=\sqrt{J}$ and $\newset(0)=d(0)$. Then
$\lan n_0,y\ran \geq  c_0|y|$ for every $y\in \overline G$ and $\overline G\cap B_{c_0^1r}(0) \subseteq \{ y\in
\overline G:\ \dist(y,rn_0+\newset(0))>0\}\subseteq B_{c_0^2r}(0)$ for each
$r\geq 0$. Then $\MV$ satisfies Assumption 2' and  $G, d(\cdot)$ is
polyhedral.   
Therefore,  Theorem
\ref{thm:test}  shows that Assumptions \ref{ass:TF} and \ref{ass:V}
hold.  The fact that the associated stochastic differential equation
with reflection has a pathwise unique solution  follows from
Corollary 4.4 of \cite{Ram06}.  Hence, well-posedness of the submartingale
problem follows from Theorem 2 of  \cite{KanRam11b}. }
\end{example}

\begin{example}\label{ex:cusp}
{\em Consider a two-dimensional domain $G$ with representation \[G=
  \{(x,y):\ x\geq 0,\ -x^\beta<y<x^\beta\},\qquad \beta>1.\] The
  domain $G$ has a cusp at the origin and $G=G_1\cap G_2$,
  where \begin{eqnarray*} G_1&=&\{(x,y):\ y<x^\beta \mbox{ when }
    x\geq 0 \mbox{ and } y<0 \mbox{ when } x<0\}, \\ G_2&=&\{(x,y):\
    y>-x^\beta \mbox{ when } x\geq 0 \mbox{ and } y>0 \mbox{ when }
    x<0\} .\end{eqnarray*} For each $j=1,2$ and $z\in \partial G_j$,
  let $n^j(z)$ be the inward unit normal vector to $\partial G_j$ and
  let $\gamma^j(z)$ make a constant angle $\theta_j\in (-\pi/2,\pi/2)$
  with $n^j(z)$. We take $\theta_j > 0$ if and only if the first
  component of $\gamma^j(z)$ is negative, that is, $\gamma^j(z)$
  points towards the origin, in small neighborhoods of the origin.
  Since $\theta_j \neq \pm \pi/2$, we can without loss of generality
  assume the normalization $\lan \gamma^j(z), n^j(z)\ran =1$ holds.  
 It was proved in \cite{DT} that the submartingale problem is well posed when $\theta_1+\theta_2\leq 0$.

With the choice $\MV=\{0\}$, it is clear that the first
condition of Assumption 2' is satisfied.  To see that the second set
of  properties is also satisfied,  let $n_0$ be a unit vector that is
perpendicular to $\gamma^1(0)$ and points towards $G$, and as usual
let $\newset(0)=\{x\in \R^2:\ \lan n_0,x\ran \geq 0\}$. It is easy to
see that $\gamma^j(0)\in \newset(0)$ for $j=1,2$ and  that there exist
$r_0>0$, $\alpha_0>0$, $c_0^1>0$ and $c_0^2>0$ such that $\lan
n_0,y\ran \geq  \alpha_0|y|$ for every $y\in \overline G\cap
B_{r_0}(0)$ and $\overline G \cap B_{c_0^1r}(0)\subseteq \{y\in
\overline G\cap B_{r_0}(0):\ \dist(y,rn_0+\newset(0))>0\}\subseteq
B_{c_0^2r}(0)$ for each $r\in (0,r_0/2)$.  Thus,  $(G,d(\cdot))$ is clearly piecewise ${\cal C}^1$ with continuous
reflection and $\MV$ satisfies Assumption 2', which immediately
implies Assumption \ref{ass:V} is satisfied. 
Since $G$ is neither bounded nor a polyhedral domain,  Theorem
\ref{thm:test} cannot be applied directly to show that Assumption
\ref{ass:TF} holds. But a similar argument as the one used in the proof of
Theorem \ref{thm:test} can be used to establish Assumption
\ref{ass:TF}. The details are  deferred to  Appendix \ref{app:cusp}.
}
\end{example}

\section{Proof of Theorem \ref{thm:SS}}
\label{sec-proof1}

Let $\pi$ be a probability measure on
$(\overline G,\MB(\overline G))$ such that
$\pi(\partial G)=0$.
The discussion prior to the statement of Theorem \ref{thm:SS} shows
that  (\ref{mono0})  is a necessary condition for $\pi$ to be a
stationary distribution of the
submartingale problem.
 So, it only remains to prove the sufficiency of the condition
(\ref{mono0}).
For the remainder of this section we assume that $\pi$ has the
additional property that it satisfies (\ref{mono0}) for every
$-f\in \MH$, and
 prove that then $\pi$ is a stationary distribution
for the well posed submartingale problem.
   The proof consists of two main steps.   In the first step, which is carried out in
Section \ref{sec:MC},   a sequence of discrete time Markov
chains is constructed such that $\pi$ is the stationary distribution for each Markov
chain  in the sequence.  In the second step, which is presented in
Sections \ref{subs:loc}--\ref{subs:pfss},  it is first shown that the sequence of continuous time
extensions of the Markov chains converges to
the unique solution of the submartingale problem with initial
distribution $\pi$.   The fact that $\pi$ is a stationary distribution
for each Markov chain in the
approximating sequence is then used to deduce that $\pi$ is a stationary
distribution for the submartingale problem.

\subsection{Construction of Markov Chains} \label{sec:MC}

In what follows, let $\MH$ be the class of functions defined in (\ref{dis:H})
and let $\lambda>0$ be a given constant.
We shall construct a discrete time Markov chain
$\{X^\lambda(n\lambda),\ n\in \Z_+\}$  that has $\pi$ as its
stationary distribution and satisfies an inequality that is analogous
to property 2 of the submartingale problem.

We first state a preliminary lemma.
This lemma is similar to Lemma 3 of \cite{Weiss81}, but
 the class of functions $\MH$ considered here also takes into account the set
$\MV$. For completeness, we provide the
 proof of the lemma in Appendix \ref{app:1}.

\begin{lemma} \label{lem:concave} Given $n\in \N$ and  $\ f_i\in \MH$, $i=1,\cdots,n$, for each concave function $\psi\in \C^2(\R^{n})$ that is monotone increasing in each variable separately, and any $\lambda>0$, we have
\[\begin{array}{l} \ds \int_{\overline G}\psi
  \left(f_1(y),\cdots,f_n(y)\right)d\pi(y) \\ \qquad \geq \ds
  \int_{\overline G}\psi\left(f_1(y)-\lambda \ML
    f_1(y),\cdots,f_n(y)-\lambda \ML f_n (y)\right)d\pi(y). \end{array} \]
\end{lemma}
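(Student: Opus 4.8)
The plan is to prove the inequality by combining the hypothesis on $\pi$ (namely that $\int_{\overline G}\ML f\,d\pi\le 0$ for every $-f\in\MH$) with the monotonicity and concavity of $\psi$, reducing the $n$-dimensional statement to $n$ applications of the scalar hypothesis via a telescoping/convexity argument. First I would record the key structural fact about $\MH$: it is a cone that is closed under addition and under multiplication by nonnegative scalars, and it contains constants, so that for $f_i\in\MH$ and $\lambda>0$ each function $\lambda f_i$ and each partial sum again lies (up to the constant shift allowed in $\C_c^2(\overline G)\oplus\R$) in $\MH$; in particular $-g\in\MH$ whenever $g$ is a nonnegative linear combination of the $-f_i$. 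This is what will let me feed combinations of the $f_i$ into the scalar inequality $\int_{\overline G}\ML g\,d\pi\le 0$.

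The core step is a pointwise estimate. Fix $y\in\overline G$ and write $u=(f_1(y),\dots,f_n(y))$ and $u^\lambda=(f_1(y)-\lambda\ML f_1(y),\dots,f_n(y)-\lambda\ML f_n(y))$. By concavity of $\psi$,
\[
\psi(u^\lambda)\le \psi(u)+\langle\nabla\psi(u),u^\lambda-u\rangle
=\psi(u)-\lambda\sum_{i=1}^n \frac{\partial\psi}{\partial x_i}(u)\,\ML f_i(y).
\]
Integrating this over $\overline G$ against $\pi$ gives
\[
\int_{\overline G}\psi(u^\lambda)\,d\pi(y)\le \int_{\overline G}\psi(u)\,d\pi(y)-\lambda\sum_{i=1}^n\int_{\overline G}\frac{\partial\psi}{\partial x_i}(u)\,\ML f_i(y)\,d\pi(y),
\]
so it suffices to show that each term $\int_{\overline G}\tfrac{\partial\psi}{\partial x_i}(f_1(y),\dots,f_n(y))\,\ML f_i(y)\,d\pi(y)$ is nonnegative, i.e.\ that $\int_{\overline G}w_i(y)\,\ML f_i(y)\,d\pi(y)\ge 0$ where $w_i(y)\doteq \partial_i\psi(f_1(y),\dots,f_n(y))\ge 0$ by monotonicity. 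The difficulty is that $w_i$ is not itself one of our test functions, so the scalar hypothesis does not apply directly; the idea is to approximate $w_i$ from below by finite nonnegative combinations of functions in $\MH$. Concretely, since $\psi$ is $\C^2$, concave and coordinatewise nondecreasing, its first partials are nonnegative and monotone, and one can use the standard representation of a concave increasing $\C^1$ function as an infimum (or a limit of minima) of affine functions with nonnegative slopes — equivalently, approximate the map $y\mapsto w_i(y)$ on the (bounded, since each $f_j\in\C_c^2$) range of $(f_1,\dots,f_n)$ by expressions of the form $\min_k\big(a_k+\sum_j b_{kj}f_j\big)$ with $b_{kj}\ge 0$. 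The scalar hypothesis gives $\int \ML(\text{each affine piece})\,d\pi\le 0$, and one needs a lemma (this is exactly the kind of statement whose proof the authors relegate to the appendix) asserting that $\int_{\overline G} \big[\psi(\dots)-\lambda\sum_i\partial_i\psi(\dots)\ML f_i\big]d\pi$ is controlled by such minima; iterating over finitely many affine pieces and using that $\ML$ applied to an affine combination of the $f_i$ plus a constant is the same affine combination of the $\ML f_i$, one concludes.

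The cleanest route, which I expect the authors take, avoids explicit mollification of $w_i$ and instead argues by a direct two-step reduction: (i) the case $n=1$ and $\psi$ affine increasing is literally the hypothesis on $\pi$ (applied to $-f\in\MH$, noting $\ML$ of a constant is $0$); (ii) a general concave coordinatewise-increasing $\C^2$ function $\psi$ is a pointwise limit of finite minima $\psi_m=\min_{k\le m}\ell_k$ of affine functions $\ell_k(x)=a_k+\sum_j b_{kj}x_j$ with all $b_{kj}\ge 0$, where the convergence is monotone decreasing and also holds for the relevant difference quotients; then one checks the inequality for $\psi=\min(\ell_1,\ell_2)$ by splitting $\overline G$ according to which $\ell_k$ achieves the minimum — here the key point is that $\min(\ell_1(u^\lambda),\ell_2(u^\lambda))\le \ell_k(u^\lambda)=\ell_k(u)-\lambda\ML(\sum_j b_{kj}f_j)(y)$ on the set where $\ell_k(u)$ is the smaller, and $\int \ML(\sum_j b_{kj}f_j)\,d\pi\le 0$ since $-\sum_j b_{kj}f_j\in\MH$ — and finally one passes to the limit by dominated convergence, using that $f_j$ and $\ML f_j$ are bounded (they belong to $\MH\cap\C_c^2(\overline G)$) so all integrands are uniformly bounded. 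The main obstacle, and the part requiring genuine care, is step (ii): justifying that a coordinatewise-increasing concave $\C^2$ function on the relevant compact range can be written as a monotone limit of finite minima of affine functions with \emph{nonnegative} coefficients (the nonnegativity is what keeps each $-\sum_j b_{kj}f_j$ inside $\MH$), and handling the subsets where the minimum switches so that the pointwise bound can be integrated. Everything else — the concavity inequality, linearity of $\ML$, the dominated convergence passage — is routine.
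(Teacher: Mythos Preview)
Your first reduction is exactly right and matches the paper: concavity of $\psi$ gives the pointwise bound
\[
\psi(u^\lambda)\le \psi(u)-\lambda\sum_{i=1}^n \partial_i\psi(u)\,\ML f_i(y),
\]
so after integrating it remains to show $\sum_i\int_{\overline G}\partial_i\psi(f_1,\dots,f_n)\,\ML f_i\,d\pi\ge 0$. (Note: only the \emph{sum} must be nonnegative, not each term separately.) From this point on, however, your two proposed routes both miss the mark.

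The approximation-by-minima approach has a real gap. If $\psi_m=\min_{k\le m}\ell_k$ with $\ell_k(x)=a_k+\sum_j b_{kj}x_j$, $b_{kj}\ge 0$, and you split $\overline G$ into the sets $A_k$ where $\ell_k(u)$ is smallest, your bound becomes
\[
\int\psi_m(u^\lambda)\,d\pi\le \int\psi_m(u)\,d\pi-\lambda\sum_k\int_{A_k}\ML\Big(\sum_j b_{kj}f_j\Big)\,d\pi,
\]
and you need $\sum_k\int_{A_k}\ML g_k\,d\pi\ge 0$. But the hypothesis on $\pi$ only gives $\int_{\overline G}\ML g_k\,d\pi\ge 0$ for each fixed $g_k\in\MH$; it says nothing about integrals over the $y$-dependent subsets $A_k$. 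The argument does not close.

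What the paper does instead is much simpler and is the idea you are missing: the composite function $\Psi(y)\doteq\psi(f_1(y),\dots,f_n(y))$ itself belongs to $\MH$. Indeed, $\Psi\in\C_c^2(\overline G)\oplus\R$ because each $f_i$ is constant outside a compact set; $\Psi$ is constant near $\MV$ because each $f_i$ is; and for $y\in\partial G$, $d\in d(y)$,
\[
\langle d,\nabla\Psi(y)\rangle=\sum_{i=1}^n\partial_i\psi(f_1(y),\dots,f_n(y))\,\langle d,\nabla f_i(y)\rangle\le 0
\]
since $\partial_i\psi\ge 0$ (monotonicity) and $\langle d,\nabla f_i(y)\rangle\le 0$ (each $f_i\in\MH$). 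Hence the hypothesis applies directly to $\Psi$: $\int_{\overline G}\ML\Psi\,d\pi\ge 0$. Now expand $\ML\Psi$ by the chain rule:
\[
\ML\Psi=\sum_i\partial_i\psi(\cdot)\,\ML f_i+\tfrac12\sum_{i,j}a_{ij}\sum_{k,l}\partial^2_{kl}\psi(\cdot)\,\partial_i f_k\,\partial_j f_l.
\]
The second-order term is $\le 0$ because the Hessian of $\psi$ is negative semidefinite and $a(y)$ is positive semidefinite (write $a=a^{1/2}(a^{1/2})^T$ and regroup). Thus $\sum_i\partial_i\psi(\cdot)\,\ML f_i\ge \ML\Psi$ pointwise, and integrating finishes the proof.
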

\bigskip

\begin{lemma} \label{lem:cont}
Let $r_i$ be a bounded non-negative function defined on $\overline G$ for each $i=1,\cdots, n$ and $n\in \N$.  For $z=(z_1,\cdots,z_n)\in \R^n$, let the function $\psi:\
\R^n\rightarrow \R$ be defined by\[\psi(z_1,\cdots,z_n)\doteq \inf_{x\in
  \overline G}\left(\sum_{i=1}^n r_i(x)z_i\right).\] Then $\psi$ is continuous, concave and
monotone increasing in each variable separately.
\end{lemma}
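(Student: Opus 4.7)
The statement asserts three properties of $\psi$ — concavity, monotonicity (in the non-decreasing sense, as strict monotonicity would fail when an $r_i$ vanishes identically), and continuity — each of which is a fairly immediate consequence of the defining formula $\psi(z) = \inf_{x \in \overline G} \sum_{i=1}^n r_i(x) z_i$. The plan is to verify the three properties in turn without ever invoking any heavy machinery, since $\psi$ is simply an infimum of functions that are linear in $z$.

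First I would dispense with concavity. For each fixed $x \in \overline G$, the map $z \mapsto \sum_{i=1}^n r_i(x) z_i$ is linear, hence concave, on $\R^n$. Since the pointwise infimum of any family of concave functions is concave, $\psi$ is concave on $\R^n$. Monotonicity is equally short: if $z, z' \in \R^n$ satisfy $z_i \leq z_i'$ for every $i$, then $r_i(x) z_i \leq r_i(x) z_i'$ for every $x \in \overline G$ because $r_i(x) \geq 0$; summing over $i$ and taking the infimum over $x$ preserves the inequality, so $\psi(z) \leq \psi(z')$, which is what is meant by monotone increasing in each variable separately.

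For continuity I would prove the sharper statement that $\psi$ is globally Lipschitz on $\R^n$. Setting $M_i \doteq \sup_{x \in \overline G} r_i(x) < \infty$ (which is finite by the boundedness hypothesis on $r_i$) and $M \doteq \sum_{i=1}^n M_i$, for any $z, z' \in \R^n$ and any $x \in \overline G$ one has
\[
\sum_{i=1}^n r_i(x) z_i \leq \sum_{i=1}^n r_i(x) z_i' + \sum_{i=1}^n M_i |z_i - z_i'|,
\]
and taking the infimum over $x$ on both sides (noting that the second sum does not depend on $x$) yields $\psi(z) \leq \psi(z') + M \max_i |z_i - z_i'| \cdot (n/\max)$; more cleanly, $\psi(z) - \psi(z') \leq \sum_{i=1}^n M_i |z_i - z_i'|$. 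Interchanging the roles of $z$ and $z'$ gives the reverse inequality, so
\[
|\psi(z) - \psi(z')| \leq \sum_{i=1}^n M_i |z_i - z_i'|,
\]
which establishes Lipschitz continuity and hence continuity.

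There is no real obstacle here — the one place to be a little careful is that the boundedness of each $r_i$ is used twice: once to ensure $\psi(z) > -\infty$ for every $z$ (so that $\psi$ is a genuine real-valued function on $\R^n$, not identically $-\infty$), and once to obtain the uniform Lipschitz constant $\sum_i M_i$. Note also that no continuity assumption on the $r_i$ or on $\overline G$ is needed: the proof works with $r_i$ merely bounded and non-negative, as assumed.
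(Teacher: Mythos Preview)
Your proof is correct and follows essentially the same approach as the paper: concavity via infimum of affine functions, monotonicity from non-negativity of the $r_i$, and continuity from the uniform bound $\sup_x r_i(x) < \infty$. The only difference is packaging: the paper proves continuity by a sequential $\limsup/\liminf$ argument, whereas you extract the global Lipschitz bound $|\psi(z)-\psi(z')| \le \sum_i M_i|z_i-z_i'|$ directly, which is slightly cleaner and gives a marginally stronger conclusion from the same estimate. (The garbled intermediate inequality with ``$M\max_i|z_i-z_i'|\cdot(n/\max)$'' should simply be deleted in favor of the clean version you state immediately after.)
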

\proof Since $\psi$ is the pointwise infimum of affine functions and $r_i$ is non-negative for each $i=1,\cdots,n$, then it is not hard to see that $\psi$ is concave and is monotone increasing in each variable separately. We next show that $\psi$ is continuous, which completes the proof of the lemma. Fix $z\in \R^n$ and let $\{z^m,\ m\in \N\}$ be a sequence of points in $\R^n$ such that $z^m\rightarrow z$ as $m\rightarrow \infty$. By the definition of $\psi(z)$, for each $\varepsilon>0$, there exists $x\in \overline G$ such that $\sum_{i=1}^n r_i(x)z_i<\psi(z)+\varepsilon$. Since $z^m\rightarrow z$ and $r_i$ is bounded for each $i=1,\cdots,n$, for all $m$ large enough, we have $\sum_{i=1}^n r_i(x)z^m_i<\psi(z)+2\varepsilon$. This shows that $\psi(z^m)<\psi(z)+2\varepsilon$ for all $m$ large enough. Thus, by taking limsup and then $\varepsilon \rightarrow 0$, we have $\limsup_{m\rightarrow \infty} \psi(z^m)\leq \psi(z)$. On the other hand, For each $x\in \overline G$, $\psi(z)\leq \sum_{i=1}^n r_i(x)z_i$ by definition. For each $\varepsilon>0$, $\sup_{x\in \overline G}\sum_{i=1}^n r_i(x)|z^m_i-z_i|<\varepsilon$ for all $m$ large enough since $r_i$ is bounded for each $i=1,\cdots,n$. Then $\psi(z)\leq \sum_{i=1}^n r_i(x)z^m_i+\varepsilon$ for all $x\in \overline G$ and all $m$ large enough. It follows that $\psi(z)\leq \inf_{x\in \overline G}\left(\sum_{i=1}^n r_i(x)z^m_i\right)+\varepsilon$ for all $m$ large enough and hence $\psi(z)\leq \liminf_{m\rightarrow \infty}\psi(z^m)$. As a result, we have $\lim_{m\rightarrow \infty}\psi(z^m)=\psi(z)$ and $\psi$ is continuous at $z\in \R^n$. This shows that $\psi$ is continuous. \endproof
\bigskip

 For each function $v\in \C_b(\overline G \times \overline G)$, define
$\Upsilon_\lambda (v)$ as follows:
\be \label{dis:up}  \Upsilon_\lambda(v) \doteq
\left\{ \begin{array}{c}  \ds (r_1, \cdots, r_n, f_1,\cdots, f_n,\newH):
  n \in \N,  f_i\in \MH,  \\
\ds   r_i  \in \C_b(\overline G), \ \inf_{x\in \overline G} r_i(x)\geq 0,\ i = 1, \ldots, n, \\
\ds \  \newH \in \C(\overline G), \mbox{ and } \forall
x, y\in \overline G, \\ \ds
 \sum_{i=1}^n r_i(x)(f_i(y)-\lambda \ML f_i(y))+\newH(y)\geq v(x,y)\
\end{array}
\right\}.
\ee
Thus, it is easy to see that $\Upsilon_\lambda (v)$ is non-empty for each $v\in \C_b(\overline G\times \overline G)$.

For each $v\in \C_b(\overline G \times \overline G)$, define
\be \label{dis:Q+}  Q_{\lambda+}(v) \doteq
\inf\left\{ \begin{array}{c}  \ds\int_{\overline G}
    \left(\sum_{i=1}^n r_i(x)f_i(x)+\newH(x)\right)d\pi(x): \forall n\in
    \N, \\
 \qquad  (r_1,\cdots,r_n,f_1,\cdots,f_n, \newH)\in
 \Upsilon_\lambda(v)\ \end{array} \right\}, \ee
and
\be  \label{dis:Q-} Q_{\lambda-}(v)\doteq -Q_{\lambda+}(-v).\ee
Then the functionals $Q_{\lambda+}$ and $Q_{\lambda-}$ are real-valued on $\C_b(\overline G\times \overline G)$ and satisfy the
following properties. We use $\bf 0$ and $\bf 1$, respectively,
 to represent the identically zero and the identically one functions
 on $\overline{G} \times \overline{G}$. Then ${\bf 0,\ \bf 1}\in \C_b(\overline G\times \overline G)$.

 \begin{prop} \label{prop:Q}
 For each $v,\ v_1,\ v_2\in \C_b(\overline G \times \overline G)$, $t\geq 0$ and $C, C_1, C_2\in \R$,
 \begin{enumerate}
 \item $Q_{\lambda+}({\bf 0}) \geq 0$;
 \item $Q_{\lambda+}(v_1+v_2)\leq Q_{\lambda+}(v_1)+Q_{\lambda+}(v_2)$ and $Q_{\lambda+}(tv)=tQ_{\lambda+}(v)$;
  \item $Q_{\lambda-}(v) \leq Q_{\lambda+}(v)$;
 \item $ Q_{\lambda-}(C{\bf 1}) = Q_{\lambda+}(C{\bf 1})= C$;
  \item if $v_1\leq v_2$, then $Q_{\lambda+}(v_1) \leq Q_{\lambda+}(v_2)$ and $Q_{\lambda-}(v_1) \leq Q_{\lambda-}(v_2)$;
\item if $C_1{\bf 1}\leq v \leq C_2{\bf 1}$, then $C_1\leq Q_{\lambda-}(v)\leq Q_{\lambda+}(v)\leq C_2$.
 \end{enumerate}
 \end{prop}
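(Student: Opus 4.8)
The six assertions split into one genuine claim and five bookkeeping consequences. Only item~(1) uses a property of $\pi$ — namely that it satisfies (\ref{mono0}) for all $-f\in\MH$, which enters through Lemma~\ref{lem:concave}; items (2)--(6) are purely formal consequences of the inf/sup structure of $Q_{\lambda\pm}$ together with the identity $Q_{\lambda-}=-Q_{\lambda+}(-\cdot)$. The plan is to prove (1) first, extract from its proof the sharper estimate that feeds the lower bound in (4), and then read off the rest.

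To prove (1), I would fix an arbitrary tuple $(r_1,\dots,r_n,f_1,\dots,f_n,\newH)\in\Upsilon_\lambda({\bf 0})$ and set $\psi(z_1,\dots,z_n)\doteq\inf_{x\in\overline G}\sum_{i=1}^n r_i(x)z_i$, which by Lemma~\ref{lem:cont} is continuous, concave and nondecreasing in each coordinate (the $r_i$ being bounded and nonnegative). Taking the infimum over $x$ in the defining inequality $\sum_i r_i(x)(f_i(y)-\lambda\ML f_i(y))+\newH(y)\ge0$ gives $\psi\big(f_1(y)-\lambda\ML f_1(y),\dots,f_n(y)-\lambda\ML f_n(y)\big)+\newH(y)\ge0$ for every $y\in\overline G$. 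Integrating against $\pi$ and invoking Lemma~\ref{lem:concave} — which I would first extend from $\C^2$ to merely continuous concave coordinatewise-nondecreasing $\psi$ by mollification (a symmetric nonnegative mollifier preserves concavity and monotonicity, $\psi_\delta\to\psi$ locally uniformly, and the $f_i$ and $\ML f_i$ are bounded so all arguments stay in one fixed compact subset of $\R^n$) — yields
\[\int_{\overline G}\psi\big(f_1(y),\dots,f_n(y)\big)\,d\pi(y)+\int_{\overline G}\newH\,d\pi\ \ge\ 0.\]
Since $\psi\big(f_1(y),\dots,f_n(y)\big)\le\sum_i r_i(y)f_i(y)$ (use $x=y$ in the infimum), this gives $\int_{\overline G}\big(\sum_i r_i f_i+\newH\big)\,d\pi\ge0$, and taking the infimum over $\Upsilon_\lambda({\bf 0})$ proves (1). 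The identical computation with $C{\bf 1}$ in place of ${\bf 0}$ (replacing $\newH$ by $\newH-C$ to reduce to the ${\bf 0}$ case) shows $\int_{\overline G}\big(\sum_i r_i f_i+\newH\big)\,d\pi\ge C$ for every tuple in $\Upsilon_\lambda(C{\bf 1})$; this is the lower bound needed in (4), and applied to $v\ge C_1{\bf 1}$ it also shows $Q_{\lambda+}$ is bounded below.

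The remaining items are manipulations of feasible tuples. Multiplying a tuple for $v$ by $t\ge0$ gives a tuple for $tv$, and for $t>0$ the reverse scaling yields $Q_{\lambda+}(tv)=tQ_{\lambda+}(v)$; together with $Q_{\lambda+}({\bf 0})\le0$ (take $r_1\equiv0$, any $f_1\in\MH$, $\newH\equiv0$) and (1) this gives $Q_{\lambda+}({\bf 0})=0$. Concatenating a tuple for $v_1$ with one for $v_2$ and summing the two last components gives a tuple for $v_1+v_2$, proving subadditivity; then $0=Q_{\lambda+}({\bf 0})=Q_{\lambda+}(v+(-v))\le Q_{\lambda+}(v)+Q_{\lambda+}(-v)$ rearranges to (3). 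For (4), the tuple $n=1$, $r_1\equiv1$, $f_1\equiv C$ (which lies in $\MH$, with $\ML f_1\equiv0$), $\newH\equiv0$ is feasible for $C{\bf 1}$ and gives $Q_{\lambda+}(C{\bf 1})\le C$; with the lower bound above, $Q_{\lambda+}(C{\bf 1})=C$, hence $Q_{\lambda-}(C{\bf 1})=-Q_{\lambda+}(-C{\bf 1})=C$. Item (5) holds because $v_1\le v_2$ forces $\Upsilon_\lambda(v_2)\subseteq\Upsilon_\lambda(v_1)$, so the infimum for $Q_{\lambda+}(v_1)$ runs over a larger set; the $Q_{\lambda-}$ statement follows by applying this to $-v_2\le-v_1$. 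Finally (6) is the sandwich: (5) and (4) give $Q_{\lambda+}(v)\le Q_{\lambda+}(C_2{\bf 1})=C_2$ and $Q_{\lambda-}(v)\ge Q_{\lambda-}(C_1{\bf 1})=C_1$, while (3) gives $Q_{\lambda-}(v)\le Q_{\lambda+}(v)$; and the constant-function tuple $r_1\equiv1$, $f_1\equiv C_2$, $\newH\equiv0$ shows directly that $Q_{\lambda+}(v)\le C_2$, confirming real-valuedness.

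The only step with genuine content is the mollification extension of Lemma~\ref{lem:concave}: one must verify that convolving with a symmetric nonnegative mollifier preserves both concavity and coordinatewise monotonicity, and that the two integrals converge as $\delta\to0$ (uniform convergence on the compact range of the arguments, which is where one uses that each $f_i\in\C^2_c(\overline G)\oplus\R$ and that $b,\sigma$ are continuous, so $\ML f_i$ is bounded). Everything else is routine.
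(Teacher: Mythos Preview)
Your proposal is correct and follows essentially the same approach as the paper: the key step for (1) is the same (define $\psi$ via Lemma~\ref{lem:cont}, mollify to apply Lemma~\ref{lem:concave}, pass to the limit, then compare $\psi(f_1(y),\dots,f_n(y))$ with $\sum_i r_i(y)f_i(y)$), and the remaining items are derived by the same formal tuple manipulations. The only cosmetic differences are that for (4) the paper uses the feasible tuple $(r_1,f_1,\newH)=({\bf 0},{\bf 0},C{\bf 1})$ for the upper bound and obtains the lower bound via (3) applied to $-C{\bf 1}$, whereas you use the constant $f_1\equiv C\in\MH$ and a direct reduction to the $\mathbf{0}$ case; both are valid.
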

\proof We start with the proof of the first property.
For each $n\in \N$, let \[(r_1,\cdots,r_n,f_1,\cdots,f_n,\newH)\in
\Upsilon_\lambda(\bf 0).\]  It follows from Lemma \ref{lem:cont} that the function $\psi:\
\R^n\rightarrow \R$ defined by\[\psi(z_1,\cdots,z_n)\doteq \inf_{x\in
  \overline G}\left(\sum_{i=1}^n r_i(x)z_i\right)\] is continuous, concave and
monotone increasing in each variable separately. For each $\alpha>0$,
let $\phi_{\alpha}:\ \R^{n}\rightarrow \R$ be a function (mollifier) that satisfies $\phi_{\alpha}\in \C^\infty(\R^n)$,
\begin{eqnarray*}
 \phi_{\alpha}\geq 0,\qquad \mbox{supp}(\phi_{\alpha})\subset
 B_{\alpha}(0)\subset \R^{n},\qquad \int_{\R^{n}}\phi_{\alpha}(x)\,dx=1.
  \end{eqnarray*}
Letting $\ast$ stand for the convolution operation, we define
\[
\psi^\alpha \doteq \psi \ast \phi_{\alpha}.
\]
Then $\psi^\alpha\in \C^\infty(\R^n)$ and $\psi^\alpha$
is also concave and  monotone increasing in each variable separately.
It follows from Lemma \ref{lem:concave} that
\[\begin{array}{l} \ds \int_{\overline
    G}\psi^\alpha\left(f_1(y),\cdots,f_n(y)\right)d\pi(y) \\ \qquad
  \geq \ds \int_{\overline G}\psi^\alpha\left(f_1(y)-\lambda \ML
    f_1(y),\cdots,f_n(y)-\lambda \ML
    f_1(y)\right)d\pi(y). \end{array} \]
Since $f_i\in \C^2_c(\overline G)\oplus\R$ for each $i=1,\cdots,n$ and
$\psi^\alpha\rightarrow \psi$ pointwise  as
$\alpha\rightarrow 0$,  by sending $\alpha\rightarrow 0$ and applying
the dominated convergence theorem to both sides of the above
inequality, we have
\be
\label{ql-ineq}
\begin{array}{l} \ds \int_{\overline G}\inf_{x\in \overline
    G}\left(\sum_{i=1}^n r_i(x)f_i(y)\right)d\pi(y) \\ \qquad \geq \ds
  \int_{\overline G}\inf_{x\in \overline G}\left(\sum_{i=1}^n
    r_i(x)(f_i(y)-\lambda \ML f_i(y))\right)d\pi(y).
\end{array}
\ee
 Thus, for every $(r_1,\cdots,r_{n},f_1,\cdots,f_{n},\newH)\in
 \Upsilon_\lambda(\bf 0)$, $n\in \N$, using (\ref{ql-ineq}) and the
 definition of  $\Upsilon_\lambda(\bf 0)$ given in  (\ref{dis:up}),
it follows that
  \[\begin{array}{l} \ds \int_{\overline G}\left(\sum_{i=1}^n
      r_i(y)f_i(y)+\newH(y)\right)d\pi(y) \\ \qquad \geq \ds
    \int_{\overline G}\left(\inf_{x\in \overline G}\left(\sum_{i=1}^n
        r_i(x)f_i(y)\right)+\newH(y)\right)d\pi(y) \\ \qquad \geq \ds
    \int_{\overline G}\left(\inf_{x\in \overline G}\left(\sum_{i=1}^n
        r_i(x)(f_i(y)-\lambda \ML
        f_i(y))\right)+\newH(y)\right)d\pi(y)\\ \qquad  \geq \ds 0. \end{array} \]
Property 1 is then a consequence of the definition of $Q_{\lambda+}$ given in (\ref{dis:Q+}).

The second property follows from the definition of $Q_{\lambda+}$ in
(\ref{dis:Q+}) and the observations
that \[(r_1^j,\cdots,r_{n_j}^j,f_1^j,\cdots,f_{n_j}^j,\newH^j)\in
\Upsilon_\lambda(v_j) \qquad \mbox{ for } j=1,2,\]
imply \[(r_1^1,\cdots,r_{n_1}^1,r_1^2,\cdots,r_{n_2}^2,f_1^1,\cdots,f_{n_1}^1,f_1^2,\cdots,f_{n_2}^2,\newH^1+\newH^2)\in
\Upsilon_\lambda(v_1+v_2)\] and
that \[(r_1,\cdots,r_n,f_1,\cdots,f_n,\newH)\in \Upsilon_\lambda(v)\] if
and only if \[(r_1,\cdots,r_n,tf_1,\cdots,tf_n,t\newH)\in
\Upsilon_\lambda(tv).\] Now, the definition of $Q_{\lambda-}$ in
(\ref{dis:Q-}) and properties 1 and 2 of $Q_{\lambda+}$ just
established above
imply that
 \[Q_{\lambda+}(v)-Q_{\lambda-}(v) = Q_{\lambda+}(v)+Q_{\lambda+}(-v)\geq Q_{\lambda+}({\bf 0})\geq 0,\] which establishes the third property.

Since $(r_1,f_1,\newH)=({\bf 0}, {\bf 0}, C{\bf 1})\in
\Upsilon_\lambda(C\bf 1)$ and $\pi$ is a probability measure, it
follows from the definition (\ref{dis:Q+}) of $Q_{\lambda+}$ that \[
Q_{\lambda+}(C{\bf 1})\leq C.\] Replacing $C$ by $-C$ in the last
inequality and using (\ref{dis:Q-}), this implies
that \[Q_{\lambda-}(C{\bf 1}) = -Q_{\lambda+}(-C{\bf 1})\geq -(-C) =
C.\]  Thus, property 4 follows from property 3 and the last two
assertions. Since $\Upsilon_\lambda(v_2)\subseteq
\Upsilon_\lambda(v_1)$ when $v_1\leq v_2$, property 5 is an immediate
consequence of the definitions of $Q_{\lambda+}$ and $Q_{\lambda-}$
given in (\ref{dis:Q+}) and (\ref{dis:Q-}), respectively. Property 6
is easily deduced  from properties 3--5.
\endproof

\bigskip

Properties 2 and 4 of Proposition \ref{prop:Q} show that
$Q_{\lambda+}$ is a sublinear functional on $\C_b(\overline G \times
\overline G)$ and $C = Q_{\lambda+}(C{\bf 1}) = Q_{\lambda-}(C{\bf
  1})$ for each $C\in \R$. Note that $\C_b(\overline G \times
\overline G)$  is a vector space.
Let $\Lambda_\lambda$ be the linear functional on the space of
constant functions on $\overline G\times \overline G$ defined
by \[\Lambda_\lambda(C{\bf 1})=C.\] By the Hahn-Banach theorem and the
definitions of $Q_{\lambda+}$ and $Q_{\lambda-}$, $\Lambda_\lambda$
can be extended to a linear functional on $\C_b(\overline G\times
\overline G)$ that satisfies  \be \label{QL} Q_{\lambda-}\leq
\Lambda_\lambda \leq Q_{\lambda+}.\ee Together with property 6 of
Proposition \ref{prop:Q}, this implies that  $\Lambda_\lambda({\bf 1})=1$ and $\Lambda_\lambda$ is a
positive linear functional on $\C_b(\overline G\times
\overline G)$ and hence on $\C_0(\overline G\times
\overline G)$. Thus,  by the Riesz
representation theorem and the fact that $\Lambda_\lambda(C{\bf 1})=C$, there exists a probability measure
$\nu_\lambda$ on $\overline G\times \overline G$ with
\be \label{Lam}\Lambda_\lambda(v)=\int_{\overline G\times \overline
  G}v(x,y)\,d\nu_\lambda(x,y) \qquad\mbox{ for each }v\in \C_0(\overline
G\times \overline G)\oplus \R. \ee

The next lemma shows that both marginals of the probability measure $\nu_\lambda$ are equal to $\pi$.

\begin{lemma} For each $w\in \C_c(\overline G)$,
\be \label{dis:marg}\int_{\overline G\times \overline G}w(x)d\nu_\lambda(x,y)=\int_{\overline G\times \overline G}w(y)d\nu_\lambda(x,y)=\int_{\overline G} w(x)d\pi(x). \ee
\end{lemma}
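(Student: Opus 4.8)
The plan is to read off both identities from the sandwich $Q_{\lambda-}\le \Lambda_\lambda\le Q_{\lambda+}$ in (\ref{QL}) by producing, for each marginal, very cheap elements of $\Upsilon_\lambda$, and then to promote the resulting one‑sided bounds to equalities by a regularity argument. First I would observe that it suffices to prove
$\int_{\overline G\times\overline G}w(x)\,d\nu_\lambda(x,y)\le \int_{\overline G}w\,d\pi$ and
$\int_{\overline G\times\overline G}w(y)\,d\nu_\lambda(x,y)\le\int_{\overline G}w\,d\pi$
for every nonnegative $w\in\C_c(\overline G)$: indeed, these two inequalities will force each marginal of $\nu_\lambda$ to coincide with $\pi$ as a measure, and then (\ref{dis:marg}) holds for every $w\in\C_c(\overline G)$.

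Since $\overline G$ may be unbounded, the functions $(x,y)\mapsto w(x)$ and $(x,y)\mapsto w(y)$ need not lie in $\C_0(\overline G\times\overline G)\oplus\R$, so one cannot apply (\ref{Lam}) to them directly. I would therefore introduce cutoffs $\phi_N\in\C_c(\overline G)$ with $0\le\phi_N\le 1$ and $\phi_N\to 1$ pointwise, and set $v_N(x,y)=w(x)\phi_N(y)$ (resp. $v_N(x,y)=w(y)\phi_N(x)$); these lie in $\C_c(\overline G\times\overline G)\subset\C_0(\overline G\times\overline G)\oplus\R$, so $\Lambda_\lambda(v_N)=\int v_N\,d\nu_\lambda$ by (\ref{Lam}). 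For the $x$‑marginal, the tuple (with $n=1$) consisting of $r_1=w$, $f_1\equiv 1$ and $S\equiv 0$ belongs to $\Upsilon_\lambda(v_N)$: $w\ge 0$ is an admissible weight in $\C_b(\overline G)$, the constant function $1$ lies in $\MH$ (it is constant near $\MV$ and has vanishing gradient on $\partial G$) and is annihilated by $\ML$, and $w(x)\ge w(x)\phi_N(y)$ because $0\le\phi_N\le 1$; its value in the infimum (\ref{dis:Q+}) is $\int_{\overline G}w\,d\pi$, so $\Lambda_\lambda(v_N)\le Q_{\lambda+}(v_N)\le\int_{\overline G}w\,d\pi$. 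For the $y$‑marginal, the tuple $r_1\equiv 0$, $f_1\equiv 0$, $S=w$ belongs to $\Upsilon_\lambda(v_N)$ (here $w\in\C(\overline G)$ and $w(y)\ge w(y)\phi_N(x)$), again with value $\int_{\overline G}w\,d\pi$, so $\Lambda_\lambda(v_N)\le\int_{\overline G}w\,d\pi$. Letting $N\to\infty$, $v_N$ converges pointwise to $w(x)$ (resp. $w(y)$), dominated by $\|w\|_\infty$, and since $\nu_\lambda$ is a probability measure the dominated convergence theorem gives $\int w(x)\,d\nu_\lambda(x,y)\le\int_{\overline G}w\,d\pi$ and $\int w(y)\,d\nu_\lambda(x,y)\le\int_{\overline G}w\,d\pi$.

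It then remains to upgrade these inequalities to equalities. Writing $\mu$ for either marginal of $\nu_\lambda$, one has $\int w\,d\mu\le\int w\,d\pi$ for all nonnegative $w\in\C_c(\overline G)$; approximating the indicator of an open set from below by such functions gives $\mu(U)\le\pi(U)$ for every open $U\subseteq\overline G$, hence, taking complements and using that $\mu$ and $\pi$ are probability measures, $\mu(C)\ge\pi(C)$ for every closed $C$, while outer regularity of the finite Borel measure $\pi$ on the metric space $\overline G$ yields the reverse inequality $\mu(C)\le\pi(C)$. Thus $\mu$ and $\pi$ agree on the $\pi$‑system of closed subsets of $\overline G$, so $\mu=\pi$, and (\ref{dis:marg}) follows. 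The only genuinely delicate point is that there is no comparably cheap element of $\Upsilon_\lambda$ witnessing the reverse bound $\int w(y)\,d\nu_\lambda\ge\int_{\overline G}w\,d\pi$ for the $y$‑marginal — producing one directly would amount to inverting $I-\lambda\ML$ within $\MH$ — which is precisely why the argument must obtain equality for that marginal indirectly, through the regularity/complementation step above, after both marginals have been shown to be dominated by $\pi$.
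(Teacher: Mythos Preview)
Your argument is correct. The one‑sided bounds are obtained with exactly the same cheap witnesses the paper uses (namely $(w,\mathbf 1,\mathbf 0)$ for the $x$–marginal and $(\mathbf 0,\mathbf 0,w)$, equivalently $(\mathbf 1,\mathbf 0,w)$, for the $y$–marginal), and the passage through compactly supported truncations $\phi_N$ is precisely the device the paper employs with its sequence $f_n$ to stay inside $\C_0(\overline G\times\overline G)\oplus\R$.

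The only genuine difference is in how you convert the one‑sided bounds into equalities. The paper does this at the level of $\Lambda_\lambda$: for each marginal it also bounds $\Lambda_\lambda(1-v_n)$ from above by inserting $(1-w,\mathbf 1,\mathbf 0)$ (respectively $(\mathbf 1,\mathbf 0,1-w)$) into $\Upsilon_\lambda(1-v)$ and controlling the truncation remainder $Q_{\lambda+}(1-u_n)\to 0$; subtracting from $\Lambda_\lambda(\mathbf 1)=1$ then gives the matching lower bound directly. You instead pass to the limit first, obtain $\int w\,d\mu\le\int w\,d\pi$ for each marginal $\mu$, and recover equality by the measure‑theoretic complementation/outer‑regularity step. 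Both routes ultimately exploit that $\nu_\lambda$ and $\pi$ have total mass one; your packaging is slightly cleaner in that it avoids the explicit bookkeeping of the error term $Q_{\lambda+}(1-u_n)$, while the paper's version has the minor advantage of never leaving the functional framework.
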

\proof It suffices to prove (\ref{dis:marg}) for non-negative functions $w\in \C_c(\overline G)$ with $\sup_{x\in \overline G}w(x)\leq 1$. Let $\{f_n,\ n\in \N\}$ be a sequence of functions in $\C_c(\overline G)$ such that $0\leq f_n\leq f_{n+1}\leq 1$ for each $n\in \N$ and $f_n(x)\rightarrow 1$ as $n\rightarrow \infty$ for each $x\in \overline G$.  For each $n\in \N$, set \[ u_n(x,y)=f_n(y),\ v_n(x,y)=w(x)f_n(y), \ (x,y)\in \overline G \times \overline G,\] then $v_n,\ 1-v_n\in \C_c(\overline G\times \overline G)$. Set $v(x,y)=w(x), \ (x,y)\in \overline G \times \overline G $.  Since $(r_1,f_1,\newH_1)=(w,{\bf 1},{\bf 0})\in \Upsilon_\lambda(v)$ and  $(r_2,f_2,\newH_2)=(1-w,{\bf 1},{\bf 0})\in \Upsilon_\lambda(1-v)$, by (\ref{QL}), linearity of $\Lambda_\lambda$, property (5) of Proposition \ref{prop:Q} and the definition (\ref{dis:Q+}) of $Q_{\lambda+}$, we have
\be\label{LQ1}\Lambda_\lambda(v_n)\leq Q_{\lambda+}(v_n)\leq Q_{\lambda+}(v) \leq
\int_{\overline G}w(x)\,d\pi(x)\ee and
\begin{eqnarray} \label{LQ2}\Lambda_\lambda(1-v_n) &=&\Lambda_\lambda(1-v) + \Lambda_\lambda(v(1-u_n)) \\ &\leq & Q_{\lambda+}(1-v) +Q_{\lambda+}(1-u_n) \nonumber\\ &\leq &
\int_{\overline G}(1-w(x))\,d\pi(x) + Q_{\lambda+}(1-u_n).\nonumber \end{eqnarray}  The above two inequalities (\ref{LQ1}) and (\ref{LQ2}), together with (\ref{Lam}) and the facts that $\Lambda_\lambda({\bf 1})=1$
and $\pi$ is a probability measure, imply that
\be \label{LQ3} \int_{\overline G}w(x)\,d\pi(x) - Q_{\lambda+}(1-u_n) \leq \int_{\overline G\times \overline G}w(x)f_n(y)d\nu_\lambda(x,y)\leq \int_{\overline G}w(x)\,d\pi(x).\ee
Since $({\bf
  1},{\bf 0},1-f_n)\in \Upsilon_\lambda(1-u_n)$, by property (6) of Proposition \ref{prop:Q}  and the definition (\ref{dis:Q+}) of $Q_{\lambda+}$, we have \[0\leq Q_{\lambda+}(1-u_n) \leq
\int_{\overline G}(1-f_n(x))\,d\pi(x).\]
Applying the dominated convergence theorem to the right-hand side of the above inequality, we have \[\lim_{n\rightarrow \infty} Q_{\lambda+}(1-u_n) =0. \] Then another application of the dominated convergence theorem to the middle term of (\ref{LQ3}) yields that \[\int_{\overline G\times \overline G}w(x)d\nu_\lambda(x,y) = \int_{\overline G}w(x)\,d\pi(x).\]
In an exactly analogous fashion, we can define \[ v(x,y) = w(y),\ v_n(x,y)=f_n(x)w(y), \
(x,y)\in \overline G \times \overline G,\ n\in \N. \] Using the facts that $({\bf
  1},{\bf 0},w)\in \Upsilon_\lambda(v)$ and  $({\bf 1},{\bf 0},1-w)\in
\Upsilon_\lambda(1-v)$, the same argument given above can then be used to establish the second equality in (\ref{dis:marg}). This completes the proof of the lemma.  \endproof
\bigskip

Let $\{q_x^{\lambda,0}(dy),\ x\in \overline G\}$ be a regular
conditional probability of $\nu_{\lambda}$.
For each bounded and
measurable function $g$ and $x\in \overline G$, define
 \be \label{dis:pil}\pi_{\lambda}^0(g)(x)\doteq \int_{\overline G}
 g(y) q_x^{\lambda,0}(dy). \ee
Then for each $r\in \MC(\overline G)$
 with $ \inf_{x\in \overline G} r(x)\geq 0$ and $g\in \C_c(\overline
 G)$, we have\be \label{dis:chain3} \begin{array}{ll}\ds
   \int_{\overline G\times \overline G}r(x)g(y)d\nu_\lambda(x,y)\\
   \qquad \ds =\int_{\overline G\times \overline
     G}r(x)g(y)q_x^{\lambda,0}(dy)d\pi(x)\\ \qquad \ds
   =\int_{\overline G} r(x)\pi_\lambda^0(g)(x) d\pi(x). \end{array}\ee
For every $g \in \C_c (\overline G)$, substituting $w=g$ in (\ref{dis:marg}) and $r=\bf 1$ in
(\ref{dis:chain3}), it follows that
\be
\label{dis-pil2}
   \int \int_{\overline G \times \overline G} g(y) q_x^{\lambda, 0}
(dy) \pi(dx) = \int g(y) \pi (dy).
\ee
This shows that $\pi$ would serve as a stationary distribution for
any Markov chain with  $q_x^{\lambda,0}$ as its transition kernel.
However, in order to be able to show that the limit of the sequence of Markov
chains, as $\lambda \rightarrow 0$,
satisfies the submartingale problem, we will need to establish
an additional inequality.  As we show below, this will require us to
 modify  the definition of $q_x^{\lambda, 0}$ for $x$ on  a set of zero $\pi$-measure.

Let $r\in \C_c(\overline G)$ with $ \inf_{x\in \overline G}
r(x)\geq 0$, $g\in \C_c(\overline G)$, and $-f\in \MH$ such that
$f(y)-\lambda \ML f(y)\leq g(y)$ for all $y\in \overline G$.  Defining
 $v(x,y) = r(x) g(y)$ for $(x,y) \in \overline G \times
\overline G$, we see that $v\in \C_c(\overline G\times \overline G)$ and $(r,-f,{\bf 0}) \in
\Upsilon_{\lambda}(-v)$.
Therefore, using first  (\ref{dis:chain3}) and (\ref{Lam}), next (\ref{QL})
and (\ref{dis:Q-}), and then the  definition of $Q_{\lambda+}$
in (\ref{dis:Q+}), it follows that
\be \label{dis:chain6} \begin{array}{rcl}
\ds \int_{\overline G}r(x)\pi_{\lambda}^0(g)(x)d\pi(x)  =
\Lambda_\lambda(v) \geq  Q_{\lambda-}(v) & = &\ds -Q_{\lambda+}(-v)  \\
& \geq & \ds \int_{\overline G}r(x)f(x)d\pi(x). \end{array}\ee

We now show that the function $r$ in (\ref{dis:chain6}) can be replaced by the indicator function of any bounded Borel measurable set.

\begin{lemma} \label{lem:pif}
Suppose we are given $g\in \C_c(\overline G)$ and $-f\in \MH$ such that
$f(y)-\lambda \ML f(y)\leq g(y)$ for all $y\in \overline G$. Then for each bounded Borel measurable set $A$, \be \label{dis:chain61}  \int_{\overline G}\ind_A(x)\pi_{\lambda}^0(g)(x)d\pi(x) \geq \int_{\overline G}\ind_A(x)f(x)d\pi(x). \ee
\end{lemma}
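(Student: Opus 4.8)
The plan is to read inequality (\ref{dis:chain6}) as the statement that a certain finite signed measure integrates \emph{all} nonnegative functions in $\C_c(\overline G)$ to nonnegative values, and then to invoke the elementary fact that such a signed measure must itself be nonnegative. Concretely, I would fix $g$ and $f$ as in the hypothesis and introduce the finite signed Borel measure on $(\overline G,\MB(\overline G))$ given by
\[
\rho(dx) \doteq \bigl(\pi_{\lambda}^0(g)(x) - f(x)\bigr)\,\pi(dx).
\]
This is well defined and finite: $x\mapsto \pi_{\lambda}^0(g)(x)=\int_{\overline G} g(y)\,q_x^{\lambda,0}(dy)$ is bounded (each $q_x^{\lambda,0}$ is a probability measure) and Borel measurable, $f\in\C_c^2(\overline G)\oplus\R$ is bounded, and $\pi$ is a probability measure. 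In this notation the claim (\ref{dis:chain61}) is exactly ``$\rho(A)\geq 0$ for every (bounded) Borel set $A$'', while (\ref{dis:chain6}) says precisely that $\int_{\overline G} r\,d\rho \geq 0$ for every $r\in\C_c(\overline G)$ with $\inf_{x\in\overline G} r(x)\geq 0$.

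It then remains to show that a finite signed Borel measure $\rho$ on $\overline G$ satisfying $\int r\,d\rho\geq 0$ for all nonnegative $r\in\C_c(\overline G)$ is nonnegative. I would argue by contradiction using the Jordan decomposition $\rho=\rho^+-\rho^-$: if $\rho\not\geq 0$ then $\rho^-$ is nonzero and concentrated on a Borel set $N$ with $\rho^+(N)=0$. Choosing $M\in\N$ large enough that $\delta\doteq\rho^-\bigl(N\cap B_M(0)\bigr)>0$, and using that $|\rho|=\rho^++\rho^-$ is a finite Borel measure on the $\sigma$-compact, locally compact metric space $\overline G$ (hence inner and outer regular), I would pick a compact $K\subseteq N\cap B_M(0)$ with $\rho^-(K)>\frac{2\delta}{3}$, and then, since $\rho^+(K)=0$, a bounded relatively open set $V\supseteq K$ with $\rho^+(V)<\frac{\delta}{3}$ (so that $\overline V$ is compact, $\overline G$ being closed in $\R^J$). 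Urysohn's lemma supplies $r\in\C_c(\overline G)$ with $\ind_K\leq r\leq\ind_V$; for this $r$ one has $\inf_{x\in\overline G} r(x)\geq 0$ yet
\[
\int_{\overline G} r\,d\rho = \int r\,d\rho^+ - \int r\,d\rho^- \leq \rho^+(V)-\rho^-(K) < \frac{\delta}{3}-\frac{2\delta}{3} < 0,
\]
contradicting the positivity furnished by (\ref{dis:chain6}). Hence $\rho\geq 0$, so $\rho(A)\geq 0$ for every Borel set $A$ (in particular every bounded one, as in the statement), which is (\ref{dis:chain61}).

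The only point requiring a little care — rather than a genuine obstacle — is that the test functions available through (\ref{dis:chain6}) have \emph{compact} support while $\overline G$ may be unbounded; this is exactly why the contradiction is extracted only after truncating to a ball $B_M(0)$ on which $\rho^-$ still carries positive mass, after which the argument reduces to standard regularity of finite Borel measures plus Urysohn's lemma. An equivalent route, which I would adopt if it turns out shorter to write, is to first deduce $\rho(U)\geq 0$ for every relatively open $U\subseteq\overline G$ with compact closure, by approximating $\ind_U$ monotonically from below by functions in $\C_c(\overline G)$ and applying dominated convergence separately to $\rho^+$ and $\rho^-$, and then to pass to an arbitrary Borel set via outer regularity of $|\rho|$.
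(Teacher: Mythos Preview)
Your argument is correct. You recast (\ref{dis:chain6}) as the statement that the finite signed measure $\rho(dx)=(\pi_\lambda^0(g)(x)-f(x))\,\pi(dx)$ integrates every nonnegative $r\in\C_c(\overline G)$ to a nonnegative value, and then show via the Jordan decomposition, regularity of finite Borel measures on the locally compact, $\sigma$-compact space $\overline G$, and Urysohn's lemma that this forces $\rho\geq 0$. All of the ingredients (measurability and boundedness of $\pi_\lambda^0(g)$, finiteness of $\rho$, regularity) are justified as you describe.

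The paper's proof reaches the same conclusion by a more direct approximation: fix a bounded Borel set $A$, invoke Lusin's theorem to produce a sequence $r_n\in\C_c(\overline G)$ with $|r_n|\leq 1$ and $r_n\to\ind_A$ $\pi$-a.e., replace $r_n$ by $r_n\vee 0$ (still continuous, compactly supported, nonnegative, and converging to $\ind_A$ $\pi$-a.e.), apply (\ref{dis:chain6}) to each $r_n\vee 0$, and pass to the limit by dominated convergence. Your route is more conceptual in that it isolates a general positivity principle for signed measures; the paper's route is shorter and stays closer to the specific objects at hand, avoiding the Hahn--Jordan machinery entirely. Either is perfectly adequate here.
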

\proof  Fix a bounded Borel measurable set $A$. For the space $\overline G$ and the finite measure $\pi$, it follows from Lusin's theorem that  there exists a sequence $\{r_n,\ n\in \N\}$ of continuous functions with compact support such that $\sup_{x\in \overline G}|r_n(x)|\leq 1$ for all $n\in \N$ and $\ind_A(x)=\lim_{n\rightarrow \infty} r_n(x)$ almost everywhere with respect to $\pi$. Since $\ind_A$ is non-negative, we have $\ind_A(x)=\lim_{n\rightarrow \infty} (r_n(x)\vee 0)$ almost everywhere with respect to $\pi$, where $r_n(\cdot)\vee 0$ is continuous and non-negative for each $n\in \N$. By (\ref{dis:chain6}), we have for each $n\in \N$,\[
\int_{\overline G}(r_n(x)\vee 0)\pi_{\lambda}^0(g)(x)d\pi(x) \geq \int_{\overline G}(r_n(x)\vee 0)f(x)d\pi(x). \] Thus, (\ref{dis:chain61}) follows from (\ref{dis:chain6}) and an application of the dominated convergence theorem. \endproof
\bigskip

For each $g\in \C_c(\overline G)$ and $y\in \overline G$, let
\be \kappa_\lambda(g)(y)\doteq \sup\left\{
\ba{c}
f(y): -f\in \MH, \mbox{ and for all }z\in
\overline G, \\
f(z)-\lambda \ML f(z)\leq g(z)
\ea
\right\}.
\ee

\begin{lemma} \label{lem:control}
There exists a Borel measurable set $U$ with $\pi(U) = 0$ such
 that for every $g \in \C_c (\overline G)$,
\be
\label{ineq-pi-kappa}
\pi_\lambda^0(g)(y)\geq
 \kappa_\lambda(g)(y),  \qquad   y\in \overline
 G\setminus U.
\ee
\end{lemma}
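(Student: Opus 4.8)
The plan is to first derive, for each fixed pair $(g,f)$, a pointwise $\pi$-almost-everywhere version of Lemma~\ref{lem:pif}, and then to collapse the relevant suprema and function ranges onto a single $\pi$-null set by a double separability argument. For the first part, fix $g\in\C_c(\overline G)$ and $f$ with $-f\in\MH$ and $f(z)-\lambda\ML f(z)\le g(z)$ for all $z\in\overline G$. Since $q^{\lambda,0}$ is a regular conditional probability, $y\mapsto\pi_\lambda^0(g)(y)$ is Borel measurable and bounded by $\|g\|_\infty$, so $B_{g,f}\doteq\{y:\pi_\lambda^0(g)(y)<f(y)\}$ is a Borel set; decomposing it as $\bigcup_{N}\{y\in B_N(0):f(y)-\pi_\lambda^0(g)(y)\ge 1/N\}$ and applying Lemma~\ref{lem:pif} to each of these bounded Borel sets forces $\pi(B_{g,f})=0$, i.e.\ $\pi_\lambda^0(g)(y)\ge f(y)$ for $\pi$-a.e.\ $y$.

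Next I would remove the supremum over $f$ for a fixed $g$. The admissible set $\mathcal F(g)\doteq\{f:-f\in\MH,\ f-\lambda\ML f\le g\text{ on }\overline G\}$ is a subset of the separable normed space $(\C_c^2(\overline G)\oplus\R,\|\cdot\|_\infty)$, hence itself separable; choose a countable $\|\cdot\|_\infty$-dense subset $\{f_k\}_{k\in\N}\subset\mathcal F(g)$. Because pointwise evaluation $f\mapsto f(y)$ is $\|\cdot\|_\infty$-continuous, $\sup_k f_k(y)=\sup_{f\in\mathcal F(g)}f(y)=\kappa_\lambda(g)(y)$ for every $y\in\overline G$. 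Setting $U_g\doteq\bigcup_k B_{g,f_k}$, a $\pi$-null Borel set, we obtain $\pi_\lambda^0(g)(y)\ge\sup_k f_k(y)=\kappa_\lambda(g)(y)$ for all $y\notin U_g$; note this bound also shows $\kappa_\lambda(g)(y)\le\|g\|_\infty<\infty$ on $U_g^c$, so no separate a priori finiteness estimate for $\kappa_\lambda$ is needed.

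Finally I would remove the dependence on $g$. As $\overline G$ is a closed, hence $\sigma$-compact and locally compact, subset of $\R^J$, the space $\C_c(\overline G)$ is $\|\cdot\|_\infty$-separable; fix a countable $\|\cdot\|_\infty$-dense subset $\{g_m\}_{m\in\N}$ and put $U\doteq\bigcup_m U_{g_m}$, again $\pi$-null and Borel. Fix $y\notin U$ and an arbitrary $g\in\C_c(\overline G)$, and choose $g_m\to g$ in sup norm. If $-f\in\MH$ then $-(f+c)\in\MH$ and $\ML(f+c)=\ML f$ for every $c\in\R$, so $\kappa_\lambda(g+c)(y)=\kappa_\lambda(g)(y)+c$; combined with the obvious monotonicity ($g\le g'$ implies $\mathcal F(g)\subseteq\mathcal F(g')$, hence $\kappa_\lambda(g)\le\kappa_\lambda(g')$) this gives $\kappa_\lambda(g)(y)\le\kappa_\lambda(g_m)(y)+\|g-g_m\|_\infty$. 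Since $q_y^{\lambda,0}$ is a probability measure, likewise $\pi_\lambda^0(g_m)(y)\le\pi_\lambda^0(g)(y)+\|g-g_m\|_\infty$. Using the bound from the previous paragraph, $\kappa_\lambda(g)(y)\le\kappa_\lambda(g_m)(y)+\|g-g_m\|_\infty\le\pi_\lambda^0(g_m)(y)+\|g-g_m\|_\infty\le\pi_\lambda^0(g)(y)+2\|g-g_m\|_\infty$, and letting $m\to\infty$ yields (\ref{ineq-pi-kappa}).

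The step I expect to be the real obstacle is the measure-theoretic bookkeeping in the last two paragraphs: one must check carefully that $\kappa_\lambda(g)(y)$, although defined as a supremum over an uncountable family of test functions, is genuinely realized by a countable dense subfamily — this is exactly where sup-norm continuity of pointwise evaluation enters — and that the single exceptional set built from the countable dense family of $g$'s propagates to all $g\in\C_c(\overline G)$ via the $1$-Lipschitz dependence of both $g\mapsto\pi_\lambda^0(g)(y)$ and $g\mapsto\kappa_\lambda(g)(y)$ on $g$. Everything else — the application of Lemma~\ref{lem:pif} and the separability of $\C_c(\overline G)$ and $\C_c^2(\overline G)\oplus\R$ in the sup norm — is routine.
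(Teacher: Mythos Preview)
Your proof is correct and reaches the same conclusion, but the route you take to handle the supremum over $f$ in $\kappa_\lambda(g)$ genuinely differs from the paper's. The paper proves the integrated inequality $\int_A\pi_\lambda^0(g)\,d\pi\ge\int_A\kappa_\lambda(g)\,d\pi$ directly for all measurable $A$ by a contradiction argument that exploits the lower semicontinuity of $\kappa_\lambda(g)$: it takes the essential supremum of $\kappa_\lambda(g)$ on a bad set, finds a point $y$ near which this essential supremum is nearly attained, extracts a single $f$ witnessing the value at $y$, and then applies Lemma~\ref{lem:pif} on a small ball around $y$ to derive a contradiction. Only after this does the paper pass to a countable dense family $\{g_n\}$ and extend by approximation, exactly as you do in your last step. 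Your argument instead bypasses the topological machinery entirely: you first get the pointwise $\pi$-a.e.\ bound $\pi_\lambda^0(g)\ge f$ for each individual admissible $f$ straight from Lemma~\ref{lem:pif}, and then pass to the supremum by noting that $\mathcal F(g)$ is $\|\cdot\|_\infty$-separable so a countable dense subfamily already realizes $\kappa_\lambda(g)(y)$ at every $y$. Your approach is arguably cleaner---it trades the lower-semicontinuity/essential-supremum argument for a routine separability observation---while the paper's version has the minor advantage of never needing to name the space in which $\mathcal F(g)$ sits. The final extension from a dense family $\{g_m\}$ to all of $\C_c(\overline G)$ is essentially identical in both proofs: the paper writes it as the substitution $f_k=f-\epsilon_k$, which is exactly your observation that $g\mapsto\kappa_\lambda(g)(y)$ is $1$-Lipschitz in sup norm.
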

\proof
To establish the lemma it suffices to show that
for each $g\in \C_c(\overline G)$,
\be
\label{ineq-kappa1}
\int_{\overline G}\ind_A(y)\pi_{\lambda}^0(g)(y)d\pi(y)\geq
\int_{\overline G}\ind_A(y)\kappa_{\lambda}(g)(y)d\pi(y)
\ee
for every measurable subset $A$ of $\overline
G$.  Indeed, then fix a  countable dense subset
$\{g_n, n \in \N\}$ in $\C_c(\overline
 G)$.  For each $n \in \N$, the inequality (\ref{ineq-kappa1})  applied with $g = g_n$
 implies that there exists a  Borel measurable set $U_{g_n}$ such that
 $\pi(U_{g_n})=0$ and
$\pi_\lambda^0(g_n)(y)\geq \kappa_\lambda(g_n)(y)$
for all $ y\in \overline G\setminus U_{g_n}$.
If we set $U=\cup_{n\in \N}U_{g_n}$,  then  $\pi(U)=0$ and
\be
\label{key-inequality}
\pi_\lambda^0(g_n)(y)\geq \kappa_\lambda(g_n)(y), \qquad n \in \N, y
\in \overline G \setminus U.
\ee
We  now extend this inequality to all $g \in\C_c(\overline G)$ by a standard approximation argument.
Fix $g\in \C_c(\overline G)$ and $y \in \overline G\setminus U$.
Since $g\in \C_c(\overline G)$ and $\{g_n,n\in \N\}$ is a countable
dense set of $\C_c(\overline G)$, there exists a subsequence
$\{g_{n_k},k\in \N\}$ of $\{g_n,n\in \N\}$ such that
$g_{n_k}\rightarrow g$ uniformly as $k\rightarrow \infty$. Let
$\epsilon_k=\sup_{z\in \overline G}|g_{n_k}(z)-g(z)|$ for each $k\in
\N$. For each $-f\in \MH$ and
$f(z)-\lambda \ML f(z)\leq g(z)$  for all $z\in \overline G$, and for
each $k\in \N$, let $f_k=f-\epsilon_k$. Then $-f_k\in \MH$ and for each $z\in \overline G$, \[f_k(z)-\lambda
\ML f_k(z) = f(z)-\lambda \ML f(z) -\epsilon_k \leq g(z)
-\epsilon_k\leq g_{n_k}(z). \] Thus by (\ref{key-inequality}) we have
for each $k\in \N$,\[\pi_\lambda^0(g_{n_k})(y) \geq \kappa_\lambda
(g_{n_k})(y)\geq f_k(y) = f(y)-\epsilon_k.\]
 By taking the limit as $k\rightarrow \infty$ on both sides of the
 above inequality, we obtain (\ref{ineq-pi-kappa}).

It remains to show (\ref{ineq-kappa1}).
Fix $g\in \C_c(\overline G)$, and let $m_g$ denote the supremum of
$g$ on $\overline G$.  From (\ref{dis:pil}), it is clear that  $\pi_\lambda^0 (g)(y)\leq m_g$ for each $y\in \overline G$.
Suppose that (\ref{ineq-kappa1}) does not hold. Then there exist $\varepsilon>0$ and a bounded Borel set $A\subseteq \overline G$ such that $\pi(A)>0$ and for each $y\in A$, \[\pi_{\lambda}^0(g)(y)\leq \kappa_{\lambda}(g)(y)-\varepsilon.\]
Let $M_{g,A}$ be the essential supremum of $\kappa_\lambda(g)$ on $A$
under $\pi$. Then it follows from the definition of the essential
supremum that \[ \pi(A\cap
\{y \in \overline G:\kappa_\lambda(g)(y)>M_{g,A}-\varepsilon/4\})>0.\] Notice that the definition of  $\kappa_\lambda(g)$ implies that $\kappa_\lambda(g)$ is lower semicontinuous, and then  the set $\{y \in \overline G:\kappa_\lambda(g)(y)>M_{g,A}-\varepsilon/4\}$ is an open set. Since any open set in $\R^J$ is the union of an increasing sequence of closed sets and $A$ is bounded, then there exists a closed set $D$ such that $D$ is a subset of $\{y \in \overline G:\kappa_\lambda(g)(y)>M_{g,A}-\varepsilon/4\}$ and $\pi(A\cap D) >0$. We claim that there exists  $y\in D$ such that  $\pi(A \cap
B_r(y))>0$ for each $r>0$. Indeed, suppose  the claim does not hold.
In other words,  suppose that for every $y\in D$, there exists $r_{y}>0$ such that $\pi(A \cap
B_{r_y}(y))=0$.  Then, since $D$ is compact, by applying a finite
cover argument, it follows that $\pi(A\cap D)=0$, which is a
contradiction.  Hence, the claim holds. Now, choose  $y\in D$ that
satisfies the properties of the claim, and note that then
$\kappa_\lambda(g)(y)>M_{g,A}-\varepsilon/4$.  By the definition of
$\kappa_\lambda(g)(y)$, there exists  a function $f\in
\C_c^2(\overline G)$ such that $-f\in \MH$, $f(z)-\lambda \ML f(z)\leq
g(z)$  for all $z\in \overline G$ and $f(y)>
\kappa_\lambda(g)(y)-\varepsilon/4$.  Hence, it follows that $f(y)>M_{g,A}-\varepsilon/2$. By the continuity of $f$, there exists $r_1>0$ such that $f(z)\geq M_{g,A}-3\varepsilon/4$ for all $z\in A\cap B_{r_1}(y)$. As a consequence, $\pi$-almost surely on $A\cap B_{r_1}(y)$, \[\pi_\lambda^0(g)(z)\leq \kappa_{\lambda}(g)(z)-\varepsilon \leq M_{g,A}-\varepsilon \leq f(z) - \varepsilon/4. \] Thus, we have\[\int_{\overline G}\ind_{A\cap B_{r_1}(y)}(z)\pi_{\lambda}^0(g)(z)d\pi(z)\leq \int_{\overline G}\ind_{A\cap B_{r_1}(y)}(z)(f(z)-\varepsilon/4)d\pi(z).\]
On the other hand, it follows from Lemma \ref{lem:pif} that
\[\int_{\overline G}\ind_{A\cap
  B_{r_1}(y)}(z)\pi_{\lambda}^0(g)(z)d\pi(z)\geq \int_{\overline
  G}\ind_{A\cap B_{r_1}(y)}(z)f(z)d\pi(z).\]
But this contradicts the previous inequality since $\pi(A \cap
B_{r_1}(y))>0$.
This establishes (\ref{ineq-kappa1}) and the lemma follows.
\endproof
\bigskip

Let the measurable set  $U$ be as in Lemma \ref{lem:control},  and define
\be \label{def1} q_y^\lambda(dz)\doteq
 q_y^{\lambda,0}(dz) \qquad \mbox{ and }\qquad \pi_\lambda(g)(y)\doteq
 \pi_\lambda^0(g)(y), \qquad y\in \overline G\setminus U.\ee
We shall now extend the definition of $q_y^\lambda$ to
 $y$ for all $y\in\overline G$ in such a way
that the analogous inequality
 (\ref{ineq-pi-kappa}) holds
on all of $\overline G$.   Since the
 submartingale problem is well posed, let $\{\Q_z,\ z\in \overline
 G\}$ be the unique solution to the submartingale problem. For each
 $y\in U$, $t\geq 0$ and Borel set $A$, let
 $p(t,y,A)=\Q_y(\omega(t)\in A)$; thus, $p(t,y,\cdot)$ is the
 probability distribution of $\omega(t)$ under $\Q_y$. For $y\in U$
 and Borel set $A$, define \be q_y^\lambda(A)\doteq \frac{1}{\lambda}
 \int_0^\infty e^{-t/\lambda} p(t,y,A)dt \ee and for each  $g\in
 \C_b(\R^J)$, define
\be \label{def3}
\pi_\lambda(g)(y)\doteq \int_{\overline G}g(z) q_y^\lambda(dz)
=\frac{1}{\lambda}\int_{\overline
  G}e^{-t/\lambda}\E^{\Q_y}[g(\omega(t))]\,dt, \qquad y\in U,
\ee
where  the second equality above follows from an application of Fubini's theorem.

\begin{lemma} \label{lem:KI}
For each $g\in \C_c(\overline G)$ and $y\in \overline G$, $\pi_\lambda(g)(y)\geq \kappa_\lambda(g)(y)$.
\end{lemma}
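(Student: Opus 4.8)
The plan is to split on whether $y$ lies in the $\pi$-null set $U$ of Lemma \ref{lem:control}. For $y\in\overline G\setminus U$ there is nothing to do: by the definition (\ref{def1}), $\pi_\lambda(g)(y)=\pi_\lambda^0(g)(y)$, and Lemma \ref{lem:control} already gives $\pi_\lambda^0(g)(y)\geq\kappa_\lambda(g)(y)$. So all the work is for $y\in U$, where, by (\ref{def3}), $\pi_\lambda(g)(y)=\tfrac1\lambda\int_0^\infty e^{-t/\lambda}\,\E^{\Q_y}[g(\omega(t))]\,dt$ is the $\lambda$-resolvent of the reflected diffusion started at $y$. Fix such a $y$, and fix a function $f$ with $-f\in\MH$ satisfying $f(z)-\lambda\ML f(z)\leq g(z)$ for all $z\in\overline G$; since $\kappa_\lambda(g)(y)$ is defined as the supremum of $f(y)$ over exactly such $f$, it suffices to prove $\pi_\lambda(g)(y)\geq f(y)$.

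The first step is to feed $f$ into the submartingale property. As in the verification of necessity for Theorem \ref{thm:SS}, a function $f$ with $-f\in\MH$ is admissible in property 2 of Definition \ref{def-smg} (after extending its $\C_c^2(\overline G)$-part to a function in $\C_c^2(\R^J)$ agreeing with it on $\overline G$, and noting that adding the constant part changes nothing since $\ML$ annihilates constants); hence $f(\omega(t))-\int_0^t\ML f(\omega(u))\,du$ is a $\Q_y$-submartingale. Writing $h(t)\doteq\E^{\Q_y}[f(\omega(t))]$ and $G(t)\doteq\E^{\Q_y}[g(\omega(t))]$, and using $\Q_y(\omega(0)=y)=1$ together with Fubini (legitimate because $f$ and $\ML f$ are bounded — $\ML$ kills the constant part of $f$, and $\nabla f,\nabla^2 f$ have compact support), we get $h(t)\geq f(y)+\int_0^t\E^{\Q_y}[\ML f(\omega(u))]\,du$. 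Substituting the hypothesis in the form $\ML f\geq(f-g)/\lambda$ then yields the Gronwall-type inequality
\[ h(t)\;\geq\; f(y)+\frac1\lambda\int_0^t\bigl(h(u)-G(u)\bigr)\,du, \qquad t\geq 0. \]

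The decisive step is to integrate this inequality against the exponential kernel $\tfrac1\lambda e^{-t/\lambda}\,dt$ over $[0,\infty)$ — this is exactly the weight with which $\pi_\lambda$ was defined, and it makes the unknown $h$ cancel. All the integrals that appear are finite and Fubini applies, since $h$ and $G$ are bounded. The term $f(y)$ contributes $f(y)\cdot\tfrac1\lambda\int_0^\infty e^{-t/\lambda}\,dt=f(y)$; and exchanging the order of integration in the double integral and using $\tfrac1\lambda\int_u^\infty e^{-t/\lambda}\,dt=e^{-u/\lambda}$ turns $\tfrac1{\lambda^2}\int_0^\infty e^{-t/\lambda}\!\int_0^t(h(u)-G(u))\,du\,dt$ into $\tfrac1\lambda\int_0^\infty e^{-u/\lambda}(h(u)-G(u))\,du$. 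Setting $A\doteq\tfrac1\lambda\int_0^\infty e^{-t/\lambda}h(t)\,dt<\infty$ and recalling from (\ref{def3}) that $\tfrac1\lambda\int_0^\infty e^{-t/\lambda}G(t)\,dt=\pi_\lambda(g)(y)$, the integrated inequality reads $A\geq f(y)+A-\pi_\lambda(g)(y)$; cancelling $A$ gives $\pi_\lambda(g)(y)\geq f(y)$. Taking the supremum over all admissible $f$ yields $\pi_\lambda(g)(y)\geq\kappa_\lambda(g)(y)$, as desired.

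I expect the only friction to be bookkeeping rather than conceptual: verifying that $f$ with $-f\in\MH$ (a priori only in $\C_c^2(\overline G)\oplus\R$) is legitimately usable in the submartingale property, and justifying the interchange of $\E^{\Q_y}$ with the time integral and the use of Fubini — all of which rest on the uniform boundedness of $f$, $g$ and $\ML f$. The one genuinely clever point is the cancellation in the last display, which is the reason the $\lambda$-resolvent kernel $\tfrac1\lambda e^{-t/\lambda}$ was built into the definitions of $q_y^\lambda$ and $\pi_\lambda$.
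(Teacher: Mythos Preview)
Your proof is correct and follows essentially the same approach as the paper: split on $y\in\overline G\setminus U$ (handled by Lemma \ref{lem:control}) versus $y\in U$ (handled via the submartingale property of $f$ under $\Q_y$, integrated against the resolvent weight $\tfrac1\lambda e^{-t/\lambda}$, with the Fubini swap producing the cancellation). The only cosmetic difference is that the paper applies Fubini first to obtain $\pi_\lambda(f-\lambda\ML f)(y)\geq f(y)$ and then invokes $f-\lambda\ML f\leq g$, whereas you substitute $\ML f\geq(f-g)/\lambda$ before integrating; the computations are equivalent.
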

\proof
For each $g\in \C_c(\overline G)$, let $-f\in \MH$ be such that $f(z)-\lambda \ML f(z)\leq g(z)$ for all $z\in \overline G$. It follows from property 2 of Definition \ref{def-smg} that \[f(\omega(t))-\int_0^t\ML
f(\omega(u))\,du \] is a $\Q_y$-submartingale on
$(\ccspace,\MM,\{\MM_t\})$, which implies that
 \[
\E^{\Q_y}\left[f(\omega(t))-\int_0^t\ML
f(\omega(u))\,du\right] \geq \E^{\Q_y}[f(\omega(0))] = f(y), \]
and hence, that  \be
\label{ineq-submg}
\frac{1}{\lambda}\int_0^\infty  e^{-t/\lambda}\E^{\Q_y}\left[f(\omega(t))-\int_0^t\ML
f(\omega(u))\,du\right]dt \geq  f(y). \ee
An application of Fubini's theorem shows that
\begin{eqnarray*}
&& \frac{1}{\lambda}\int_0^\infty  e^{-t/\lambda}\E^{\Q_y} \left[
  \int_0^t \ML f(\omega(u)) \, du \right] \, dt \\
&& \qquad  =
\frac{1}{\lambda} \int_0^\infty \E^{\Q_y} \left[ \ML f(\omega(u))
\right] \, \left(\int_u^\infty e^{-t/\lambda} \, dt \right) \, du \\
&& \qquad  =   \int_0^\infty e^{-u/\lambda} \,
 \E^{\Q_y} \left[ \ML f(\omega(u))
\right] du.
\end{eqnarray*}
When combined with (\ref{def3}) and (\ref{ineq-submg})  this implies that
\[  \pi_{\lambda} (f- \lambda \ML f) (y) \geq f(y),   \qquad y
\in U.
\]
Taking the supremum over all $f$ that satisfy $-f\in \MH$ and $f(z)-\lambda \ML f(z)\leq g(z)$ for
all $z\in \overline G$,  we conclude that $\pi_\lambda(g)(y)\geq
\kappa_\lambda(g)(y)$ for $y \in \overline G \setminus U$.
Together with (\ref{def1}) and  Lemma \ref{lem:control}, this completes the proof.
\endproof
\bigskip

We are now in a position to construct the desired Markov chain. Let
$\{X^\lambda(n\lambda),n \in \Z_+\}$ be a discrete time Markov chain
defined on some probability space $(\Omega, \MF, \P)$ with transition
kernel $q_x^{\lambda}(dy)$. Also, let \[ \MF^\lambda_n\doteq
\sigma(X^\lambda(j\lambda), j=0,1,\cdots, n),\ n\in \N. \]
The next result shows that this Markov chain has the  desired properties.

\begin{proposition}\label{prop:sub}
For each $\lambda>0$, $\pi$ is a stationary distribution of the Markov
chain $\{X^\lambda(n\lambda),n \in \Z_+\}$.  Moreover, for each $y\in
\overline G$,  $g\in \C_c(\overline G)$ and $f\in \C_c^2(\overline G)$
such that $-f\in \MH$ and $g=f-\lambda\ML f$, \be \label{dis:submart}
g(X^\lambda(n\lambda))-g(X^\lambda(0))-\lambda\sum_{j=0}^{n-1}\ML
f(X^\lambda(j\lambda)),  \quad n \in \N, \ee   and \be \label{dis:submart4}
f(X^\lambda(n\lambda))-f(X^\lambda(0))-\lambda\sum_{j=1}^{n}\ML
f(X^\lambda(j\lambda)), \quad n \in \N, \ee   are $\P$-submartingales
with respect to $\{\MF^\lambda_n\}$.
\end{proposition}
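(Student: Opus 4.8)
The proposition packages together the machinery built up above, so the plan is essentially to read off both assertions from Lemma \ref{lem:KI} and from the marginal identity (\ref{dis-pil2}); no genuinely new estimate is needed. The one preliminary point is to record that $q^\lambda$ really is a transition kernel, so that the chain $\{X^\lambda(n\lambda)\}$ exists: measurability of $y\mapsto q_y^{\lambda,0}(A)$ is automatic since it is a regular conditional probability, and measurability of $y\mapsto q_y^\lambda(A)$ on the $\pi$-null set $U$ of Lemma \ref{lem:control} follows from the $\MB(\overline G)$-measurability of $y\mapsto\Q_y$ required in Definition \ref{def-smg}.

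\textbf{Stationarity.} Here the plan is to use that $q_y^\lambda = q_y^{\lambda,0}$ off the $\pi$-null set $U$, so that for every $g\in\C_c(\overline G)$,
\[
\int_{\overline G}\pi_\lambda(g)(x)\,d\pi(x) = \int_{\overline G}\pi_\lambda^0(g)(x)\,d\pi(x) = \int_{\overline G} g\,d\pi ,
\]
the second equality being exactly (\ref{dis-pil2}). Consequently, if $X^\lambda(n\lambda)$ has law $\pi$, the Markov property gives $\E[g(X^\lambda((n+1)\lambda))] = \int_{\overline G} g\,d\pi$ for all $g\in\C_c(\overline G)$; since $\C_c(\overline G)$ is measure-determining on the locally compact space $\overline G$, this forces the law of $X^\lambda((n+1)\lambda)$ to be $\pi$ as well, and I would finish by induction starting from $X^\lambda(0)\sim\pi$.

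\textbf{Submartingale property.} For the second part I would fix $f\in\C_c^2(\overline G)$ with $-f\in\MH$, set $g = f-\lambda\ML f$, and first note that $\ML f\in\C_c(\overline G)$ (the coefficients are continuous), so $g\in\C_c(\overline G)$ and $f,\ \ML f,\ g$ are bounded, whence both processes in (\ref{dis:submart}) and (\ref{dis:submart4}) are integrable. The crucial observation is that $f$ is itself an admissible competitor in the supremum defining $\kappa_\lambda(g)$ — it satisfies $-f\in\MH$ and $f-\lambda\ML f = g$ — so $f\le\kappa_\lambda(g)$, and Lemma \ref{lem:KI} then yields the pointwise inequality $\pi_\lambda(g)(y)\ge f(y)$ for every $y\in\overline G$. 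Both submartingale claims follow from this by a single conditioning step using the Markov property and the identity $g+\lambda\ML f = f$: for the process $M_n$ of (\ref{dis:submart}),
\[
\E\!\left[M_{n+1}-M_n\mid\MF^\lambda_n\right] = \pi_\lambda(g)(X^\lambda(n\lambda)) - g(X^\lambda(n\lambda)) - \lambda\ML f(X^\lambda(n\lambda)) = \pi_\lambda(g)(X^\lambda(n\lambda)) - f(X^\lambda(n\lambda)) \ge 0 ,
\]
and for the process $N_n$ of (\ref{dis:submart4}), after grouping the two terms evaluated at time $(n+1)\lambda$ as $g = f-\lambda\ML f$,
\[
\E\!\left[N_{n+1}-N_n\mid\MF^\lambda_n\right] = \pi_\lambda(g)(X^\lambda(n\lambda)) - f(X^\lambda(n\lambda)) \ge 0 .
\]

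\textbf{Where the work sits.} I do not expect any real obstacle in this proposition itself; the difficulty has been front-loaded into Lemma \ref{lem:KI} and the construction of $q^\lambda$. The one subtlety to keep in mind is the asymmetry between the two conclusions: stationarity uses only the $\pi$-almost-everywhere behavior of the kernel, so the unmodified $q^{\lambda,0}$ would suffice there, whereas the submartingale property must hold no matter where the chain starts, which is exactly why $q^\lambda$ had to be redefined on the $\pi$-null set $U$ and why Lemma \ref{lem:KI} was stated so as to hold at \emph{every} point of $\overline G$.
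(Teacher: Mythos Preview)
Your proposal is correct and follows essentially the same approach as the paper: stationarity from the marginal identity (\ref{dis-pil2}) together with $q^\lambda=q^{\lambda,0}$ off the $\pi$-null set $U$, and the submartingale property from the pointwise bound $\pi_\lambda(g)\ge f$ furnished by Lemma \ref{lem:KI}. The only cosmetic difference is that the paper first records the standard martingale $g(X^\lambda(n\lambda))-g(X^\lambda(0))-\sum_{j=0}^{n-1}(\pi_\lambda-I)(g)(X^\lambda(j\lambda))$ and then compares the compensator to $\lambda\ML f$, whereas you compute the one-step conditional increments directly; the underlying inequality $(\pi_\lambda-I)(g)\ge\lambda\ML f$ is the same.
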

\proof For $g\in \C_c(\overline G)$,
substituting $w=g$ in (\ref{dis:marg}), $r=\bf 1$ in
(\ref{dis:chain3}), and using the relations
$q_y^\lambda(dz)=q_y^{\lambda,0}(dz)$ for $y\in \overline G\setminus
U$ and $\pi(U)=0$, it follows that
\[\E^\pi[g(X^\lambda(\lambda))]=\E^\pi [g(X^\lambda(0)].\]
This shows that $\pi$ is a stationary distribution of the Markov chain $\{X^\lambda(n\lambda),n \in \Z_+\}$.

To prove the second part of the lemma, fix
 $g\in\C_c(\overline G)$ and $f\in \C_c^2(\overline G)$ such that $-f\in
\MH$ and $g=f-\lambda\ML f$, and let $I$ denote the identity map
$I(g)=g$. Then, by the definition of $\pi_\lambda$ given in
(\ref{def1}), (\ref{dis:pil}) and (\ref{def3}),

\be \label{dis:mart}
g(X^\lambda(n\lambda))-g(X^\lambda(0))-\lambda\sum_{j=0}^{n-1}\frac{1}{\lambda}(\pi_\lambda-I)(g)(X^\lambda(j\lambda))
\ee is a $\P$-martingale with respect to $\{\MF^\lambda_n\}$.  By
Lemma \ref{lem:KI},  for each $z\in \overline G$,
$\pi_\lambda(g)(z)\geq f(z)$, and hence $(\pi_\lambda - I) g(z)\geq
\lambda \ML f(z)$. This establishes the submartingale property for
the process in (\ref{dis:submart}).  In turn, since $f
= g + \ML f$, this immediately implies the  submartingale property for the process in
(\ref{dis:submart4}).
\endproof

\subsection{Localization and Conditioning}
\label{subs:loc}

Let $\{X^\lambda(n\lambda),n \in \Z_+\}$ be the Markov chain with
transition kernel $\{q_x^{\lambda}(dy),\ x\in \overline G\}$ and
initial distribution $\pi$ as constructed in the last section, and let
$X^\lambda$ denote the continuous time extension obtained by linearly
interpolating
$\{X^\lambda(n\lambda),n \in \Z_+\}$ between time points $n\lambda$ and $(n+1)\lambda$ for $n\in \Z_+$. Let \[ \MF^\lambda_t\doteq \sigma(X^\lambda(n\lambda),n\leq \lceil t \rceil),\ t\geq 0.\] Note that, by construction, $X^\lambda$ has continuous paths.  Let $\{\lambda_m, m\in \N\}$ be a sequence of positive decreasing real numbers such that $\lambda_m\rightarrow 0$ as $m\rightarrow \infty$.
For each $m \in \N$, let $\Q^{m}$ denote the probability measure on  $(\ccspace,\MM)$
induced by $X^{\lambda_m}(\cdot)$ when $X^{\lambda_m}(0)$ has distribution $\pi$.
By (\ref{Qpi}), $\Q_{\pi}$ is the
integral of $\Q_y$, the solution to the submartingale problem for a given initial condition $y$, with
respect to the probability measure $\pi$.
  It will prove convenient to represent $\Q^{m}$ in a similar fashion.
For each  $m \in \N$ and $\omega \in \ccspace$, let
$\Q^m_{\omega'}$ be a regular conditional probability
 distribution of $\Q^m$ given $\MM_0$.
 Then, for each $\omega' \in \ccspace$,
\be \label{initial}\Q^{m}_{\omega'}(\omega(0)=\omega'(0))=1.\ee
Moreover,  disintegrating $\Q^m$ and using the fact that the distribution of
$\omega(0)$ under $\Q^m$ is $\pi$, we obtain
\be
\label{rep-qm}
\Q^m (\cdot) = \int_{\ccspace}  \Q^m_{\omega'} (\cdot) \Q^m(d\omega') = \int_{\ccspace}
\Q^m_{\omega'} (\cdot) \P^\pi (d\omega'),
\ee
where  $\P^{\pi}$ is the probability measure on
$(\ccspace, \MM_0)$ obtained as the restriction of $\Q^m$ to $\MM_0$
defined as follows: for every $A_0 \in {\cal
  B}(\R^J)$,
\be
\label{def-ppi}
\P^{\pi} (A) \doteq \pi(A_0 \cap \overline G),  \qquad \mbox{ if } A = \{\omega \in
\ccspace: \omega(0) \in A_0\}.
\ee

Since $G$ may be unbounded, we now carry out a localization. For each
 $N\in \N$ and $\omega\in \ccspace$, let
\[
\kappa^{N,\lambda_m}(\omega)=\inf\left\{t\geq 0: \ba{c}
\ t=j\lambda_m \mbox{ for some } j\in \Z_+\mbox{ and } \\
\omega(t) \notin B_N(0)
\ea
\right\}.
\]

To ease the notation, for each $N,m\in \N$, we use $\Q^{N,m}$ to
denote the probability measure on  $(\ccspace,\MM)$ induced by
$\omega(\cdot\wedge \kappa^{N,\lambda_m})$.   Also,
for each $m, N \in \N$, let $\Q^{N,m}_{\omega'}$ be the law of
$\omega(\cdot\wedge \kappa^{N,\lambda_m})$ under $\Q^m_{\omega'}$.
Then $\{\Q^{N,m}_{\omega'}\}$ is a regular conditional probability
distribution of $\Q^{N,m}$ given $\MM_{0}$.
Now,  for each $N \geq 0$ recall the stopping time $\chi^N$ defined in
(\ref{chi}). It is an immediate consequence of the definitions that
for each $m, N\in \N$, $\chi^N\leq \kappa^{N,\lambda_m}$ and hence,
\be
\label{eq-law}
\Q_{\omega'}^{N,m} (A) = \Q_{\omega'}^m (A),   \qquad A \in
\MM_{\chi^N}.
\ee
 Moreover, analogous to (\ref{rep-qm}), $\Q^{N,m}_{\omega'}$ satisfies the integral
 representation
\be
\label{rep-qnm}
\Q^{N,m} (\cdot) = \int_{\ccspace}  \Q^{N,m}_{\omega'} (\cdot) \Q^{N,m}(d\omega') = \int_{\ccspace}
\Q^{N,m}_{\omega'} (\cdot) \P^\pi (d\omega').
\ee

We now state two main results and show that they imply  Theorem
\ref{thm:SS}.   The first is a compactness result, which is proved in Section
\ref{subs:pftight}.

\begin{proposition}
\label{lem:ctightxnm}
Suppose that Assumption \ref{ass:TF} holds. Then for each $N\in \N$,
 for each $\omega'\in \ccspace$, the sequence of
 probability measures $\{\Q^{N,m}_{\omega'},m\in \N\}$ on
  the Polish space $(\ccspace, \MM)$, equipped with the uniform
  topology on compact sets,
  is precompact.
\end{proposition}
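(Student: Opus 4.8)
The plan is to establish precompactness via a tightness argument based on the standard criterion for $C[0,\infty)$: for each $\omega' \in C[0,\infty)$ and each $N \in \N$, the laws $\{\Q^{N,m}_{\omega'}, m \in \N\}$ are tight iff (i) the initial distributions are tight (here they are trivial, being the point mass at $\omega'(0)$ by \eqref{initial}), and (ii) a modulus-of-continuity estimate holds uniformly in $m$. Since the processes are stopped at the exit time from $B_N(0)$, the state space is effectively the compact set $\overline G \cap B_N(0)$, so only the oscillation control is at issue. First I would reduce to showing that for every $T > 0$ and $\eta > 0$,
\[
\lim_{\delta \to 0} \limsup_{m \to \infty} \sup_{\omega'} \Q^{N,m}_{\omega'}\left( \sup_{|s-t| \le \delta, \, s,t \le T} |\omega(s) - \omega(t)| > \eta \right) = 0.
\]

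The key step is to control these oscillations using the submartingale inequalities from Proposition \ref{prop:sub} together with the test functions provided by Assumption \ref{ass:TF}. For a fixed starting point $x = \omega'(0) \in \overline G \cap B_N(0)$ and a given $\varepsilon > 0$, take the nonnegative function $f_{x,\varepsilon} \in \C^2_b(\overline G)$ from Assumption \ref{ass:TF}, which vanishes on $B_{\varepsilon/2}(x)$, exceeds $c(N,\varepsilon)$ outside $B_{3\varepsilon}(x)$, and has $|\ML f_{x,\varepsilon}| \le C(N,\varepsilon)$. Because $f_{x,\varepsilon}$ is a countable sum of functions in $\MH \cap \C^2_c(\overline G)$ and the $\ML f$-bound is uniform, property \eqref{dis:submart4} (applied summand-by-summand and passing to the limit, using the uniform bound to justify interchange) shows that
\[
f_{x,\varepsilon}(X^{\lambda_m}(n\lambda_m)) - \lambda_m \sum_{j=1}^n \ML f_{x,\varepsilon}(X^{\lambda_m}(j\lambda_m)) + C(N,\varepsilon)\, n \lambda_m
\]
is a submartingale. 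Hence $t \mapsto f_{x,\varepsilon}(X^{\lambda_m}(t)) + C(N,\varepsilon) t$ is (along the discrete skeleton, then after linear interpolation up to an error going to $0$ with $\lambda_m$) a nonnegative submartingale under $\Q^{m}_{\omega'}$, and therefore the same holds under $\Q^{N,m}_{\omega'}$ on $[0, \chi^N]$ by \eqref{eq-law}. A maximal inequality for nonnegative submartingales then bounds $\Q^{N,m}_{\omega'}(\sup_{t \le \delta} f_{x,\varepsilon}(X(t)) > c(N,\varepsilon))$ in terms of $\E[f_{x,\varepsilon}(X(\delta))] + C(N,\varepsilon)\delta$, and since $f_{x,\varepsilon}(X(0)) = f_{x,\varepsilon}(x) = 0$, the submartingale property gives $\E[f_{x,\varepsilon}(X(\delta))] \le C(N,\varepsilon)\delta$, so this probability is at most $2C(N,\varepsilon)\delta / c(N,\varepsilon)$. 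On the complement of that event the path stays in $B_{3\varepsilon}(x)$ up to time $\delta$; applying this at the successive positions of the path along a mesh of $[0,T]$ and using the strong Markov property (or just the sequential structure of the chain) yields the uniform modulus-of-continuity bound with $\eta = 6\varepsilon$, completing tightness.

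Finally I would invoke Prohorov's theorem: tightness of $\{\Q^{N,m}_{\omega'}, m \in \N\}$ on the Polish space $C[0,\infty)$ with the topology of uniform convergence on compact sets gives precompactness. The main obstacle I anticipate is the bookkeeping needed to transfer the submartingale property of \eqref{dis:submart4} — stated for $f \in \C^2_c(\overline G)$ with $-f \in \MH$ — to the function $f_{x,\varepsilon}$, which is nonnegative (so $-f_{x,\varepsilon} \notin \MH$ in general) and only a countable sum of such functions. One must instead use that each summand $h_k$ satisfies $-h_k \in \MH$ is not what is wanted; rather, property 1 of Assumption \ref{ass:TF} says the summands lie in $\MH \cap \C^2_c(\overline G)$, i.e. $h_k \in \MH$, so $-(-h_k) \in \MH$ and \eqref{dis:submart4} applies to $-h_k$, making $-h_k(X(n\lambda)) + \lambda \sum \ML(-h_k)(X(j\lambda))$ a submartingale, equivalently $h_k(X(n\lambda)) - \lambda \sum \ML h_k(X(j\lambda))$ a supermartingale — so one actually gets the reverse inequality for each summand. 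This sign issue must be handled carefully: the relevant statement is that for $h \in \MH$, \eqref{dis:submart} with the roles reversed gives that $f_{x,\varepsilon}(X(n\lambda)) - \lambda \sum \ML f_{x,\varepsilon}(X(j\lambda))$ is a \emph{supermartingale}, and combined with $|\ML f_{x,\varepsilon}| \le C(N,\varepsilon)$ this still yields that $f_{x,\varepsilon}(X(t)) + C(N,\varepsilon) t$ is a submartingale, which is exactly what the maximal inequality needs; so the argument goes through, but one has to be precise about which direction of inequality Proposition \ref{prop:sub} provides for functions in $\MH$ versus functions whose negatives lie in $\MH$, and about summability interchanges justified by the uniform bounds $c(N,\varepsilon), C(N,\varepsilon)$.
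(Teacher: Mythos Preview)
Your approach is on the right track in using the test functions $f_{x,\varepsilon}$ from Assumption~\ref{ass:TF} together with the supermartingale property inherited from Proposition~\ref{prop:sub}, and this is indeed the core of the paper's argument. However, there are two problems, one minor and one substantial.

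First, the minor point: from the supermartingale property of $f_{x,\varepsilon}(X(n\lambda)) - \lambda\sum_{j=1}^{n}\ML f_{x,\varepsilon}(X(j\lambda))$ and the bound $|\ML f_{x,\varepsilon}|\le C(N,\varepsilon)$, one obtains that $f_{x,\varepsilon}(X(n\lambda)) - C(N,\varepsilon)\,n\lambda$ is a supermartingale, \emph{not} that $f_{x,\varepsilon}(X(n\lambda)) + C(N,\varepsilon)\,n\lambda$ is a submartingale; the supermartingale inequality gives only an upper bound on the conditional increment of $f_{x,\varepsilon}$, not a lower bound. The exit-time estimate you want, namely $\Q^{N,m}_{\omega'}(\tau\le\delta)\le C(N,\varepsilon)\delta/c(N,\varepsilon)$ with $\tau$ the first grid time at which the process leaves $B_{3\varepsilon}(x)$, does follow, but by applying optional stopping directly to the supermartingale at $\tau\wedge\lfloor\delta/\lambda_m\rfloor$ rather than by Doob's maximal inequality; this is exactly Claim~2 of the paper's proof.

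Second, and more seriously, your ``mesh'' argument does not close. Partitioning $[0,T]$ into intervals of length $\delta$ and taking a union bound yields
\[
\Q^{N,m}_{\omega'}\bigl(\text{some interval has a }3\varepsilon\text{-excursion}\bigr)\ \le\ \frac{T}{\delta}\cdot\frac{C(N,\varepsilon)\,\delta}{c(N,\varepsilon)}\ =\ \frac{T\,C(N,\varepsilon)}{c(N,\varepsilon)},
\]
which does not vanish as $\delta\to 0$. Converting a one-interval exit bound of order $\delta$ into a modulus-of-continuity statement requires the more refined Stroock--Varadhan criterion recorded here as Lemma~\ref{prop:ctight}. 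Its condition~1 is precisely the exit-time estimate above, but its condition~2 demands in addition that
\[
\sum_{0\le j\lambda_m\le T}\Q^{N,m}_{\omega'}\bigl(|\omega((j+1)\lambda_m)-\omega(j\lambda_m)|\ge\varepsilon\bigr)\ \longrightarrow\ 0\quad\text{as }m\to\infty.
\]
This one-step jump control is \emph{not} a consequence of condition~1: taking $\delta=\lambda_m$ there gives only an $O(\lambda_m)$ bound per step, hence an $O(1)$ sum. The paper handles this in its Claim~3 by a separate two-scale argument: one dominates $|\ML f_{x,\varepsilon_{l_1}}|$ pointwise by a multiple of $f_{x,\varepsilon_{l_2}}$ for a smaller $\varepsilon_{l_2}$, and then applies the supermartingale inequality a second time to gain an additional factor of $\lambda_m$, so that each one-step probability is $O(\lambda_m^{2})$ and the sum over $T/\lambda_m$ terms tends to zero. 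This ingredient is genuinely missing from your sketch and cannot be absorbed into the mesh argument.
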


The second result is a characterization of limit points of the
sequence, and is proved in Section \ref{subs-propconv}.

\begin{prop}
\label{prop:conv}
Suppose Assumptions  \ref{ass:V} holds.  Then
there exists $F_0 \in \MM_0$ with $\P^{\pi}
(F_0) = 0$  such
that for every $N \in \N$ large enough such that $\MV\subset B_N(0)$ and each $\omega'\notin F_0$, any limit point
$\Q^{N,*}_{\omega'}$ of the sequence
$\{\Q^{N,m}_{\omega'}, m \in \N\}$ satisfies
\be
\label{restrict}
\Q^{N,*}_{\omega'} (A) = \Q_{\omega'(0)} (A),  \qquad  A \in
\MM_{\chi^N},
\ee
where for $z \in \overline G$, $\Q_{z}$ is the unique solution to the submartingale problem
with initial condition $z$.
\end{prop}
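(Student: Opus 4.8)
\emph{Plan.} The strategy is to show that, for $\P^\pi$-a.e.\ $\omega'$, every subsequential limit $\Q^{N,*}_{\omega'}$ of $\{\Q^{N,m}_{\omega'},\,m\in\N\}$ (these exist by Proposition \ref{lem:ctightxnm}) solves the submartingale problem stopped at $\chi^N$ with initial condition $\omega'(0)$, and then to invoke well-posedness of the stopped submartingale problem (Remark \ref{WPL}) to conclude that $\Q^{N,*}_{\omega'}$ agrees with $\Q_{\omega'(0)}$ on $\MM_{\chi^N}$, which is exactly (\ref{restrict}). The exceptional set $F_0$ will be the union of $\{\omega':\,\omega'(0)\in U\}$, with $U$ the $\pi$-null set of Lemma \ref{lem:control}, together with the further $\P^\pi$-null sets that arise in the disintegration steps below; since $\pi(U)=0$ and each added set is null, $\P^\pi(F_0)=0$. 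Property~1 of Definition \ref{def-smgl} is then immediate: $\omega\mapsto\omega(0)$ is continuous on $\ccspace$, so the law of $\omega(0)$ under $\Q^{N,*}_{\omega'}$ is the weak limit of its law under $\Q^{N,m}_{\omega'}$, which is $\delta_{\omega'(0)}$ by (\ref{initial}); moreover the paths under $\Q^{N,*}_{\omega'}$ are constant after $\chi^N$, since those under $\Q^{N,m}_{\omega'}$ are constant after $\kappa^{N,\lambda_m}$ and $\kappa^{N,\lambda_m}\to\chi^N$, so it suffices to verify the remaining properties on $[0,\chi^N]$.

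\emph{Submartingale property (property~2).} Fix $f\in\C^2_c(\R^J)$ that is constant in a neighborhood of $\MV$ and satisfies $\lan d,\nabla f(x)\ran\ge0$ for all $d\in d(x)$, $x\in\partial G$; restricted to $\overline G$ this means $-f\in\MH$. By Proposition \ref{prop:sub} the process in (\ref{dis:submart4}) is a $\P$-submartingale for every $m$. Stopping at the mesh stopping time $\kappa^{N,\lambda_m}$ and disintegrating over $\MM_0$ gives, for $\P^\pi$-a.e.\ $\omega'$, that $f(\omega(t\wedge\kappa^{N,\lambda_m}))-\int_0^{t\wedge\kappa^{N,\lambda_m}}\ML f(\bar\omega^m(u))\,du$ is a $\Q^{N,m}_{\omega'}$-submartingale, where $\bar\omega^m$ is the mesh skeleton of $\omega$. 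Along the chosen subsequence one then passes to the limit: $f$ is bounded, $\ML f$ is bounded and continuous so the time integrals converge uniformly on compacts (with the skeleton replaced by $\omega$ in the limit), and $\kappa^{N,\lambda_m}\to\chi^N$; testing against bounded continuous $\MM_s$-measurable functionals yields that $f(\omega(t\wedge\chi^N))-\int_0^{t\wedge\chi^N}\ML f(\omega(u))\,du$ is a $\Q^{N,*}_{\omega'}$-submartingale. The one delicate point is the discontinuity of $\chi^N$ as a functional of $\omega$; this is handled by restricting attention to all but countably many $N$, namely those for which $\Q^{N,*}_{\omega'}$ charges no path that meets $\partial B_N(0)$ without immediately exiting $B_N(0)$, which is harmless since $N\to\infty$ in the application of Theorem \ref{thm:SS}.

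\emph{Zero occupation of $\MV$ — the main obstacle.} Property~3, $\E^{\Q^{N,*}_{\omega'}}[\int_0^{\chi^N}\ind_\MV(\omega(s))\,ds]=0$, is where Assumption \ref{ass:V} enters and is the crux. Because $\MV$ is finite and $X^{\lambda_m}$ is affine and a.s.\ nonconstant on each mesh interval, $X^{\lambda_m}$ spends zero Lebesgue time on $\MV$ itself; but $\ind_\MV$ is only upper semicontinuous, so this does not pass to the limit, and one needs instead a bound on the time spent in the open fattening $\MV^\ve$ that is uniform in $m$ and vanishes as $\ve\downarrow0$. The idea is, for each $x\in\MV$, to use the direction $\nvec_x$ of Assumption \ref{ass:V} to build a nonnegative function that near $x$ is a smooth concave function of $\lan\nvec_x,\cdot-x\ran$, is constant outside $B_{r_x}(x)$, and is constant in a neighborhood of every other point of $\MV$: the conditions $\lan\nvec_x,d\ran\ge0$ for $d\in d(y)$ and $\nvec_x^Ta(y)\nvec_x\ge\alpha_x$ on $\overline G\cap B_{r_x}(x)$ force the second-order part of $\ML$ applied to this function to dominate a positive multiple of $\ind_{B_\ve(x)}$ up to an error controlled by $\|b\|_\infty$ and the chosen scale, while the cone condition $\lan\nvec_x,y-x\ran\ge\alpha_x|y-x|$ identifies the sublevel set $\{\lan\nvec_x,\cdot-x\ran<\ve\}\cap\overline G\cap B_{r_x}(x)$ with a neighborhood of $x$ in $\overline G$. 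Combining the resulting inequality with the submartingale property just established and with the stationarity of $\pi$ for each approximating chain (so that at mesh points the expected occupation time of $\MV^\ve$ over $[0,t]$ is at most $(t+1)\pi(B_\ve(\MV))$, the between-mesh contribution being controlled by the modulus-of-continuity estimates underlying Proposition \ref{lem:ctightxnm}), and then using lower semicontinuity of $\ind_{\MV^\ve}$ to pass to the limit and Fatou's lemma to bring the $\P^\pi$-average inside, one obtains $\E^{\Q^{N,*}_{\omega'}}[\int_0^{t\wedge\chi^N}\ind_{\MV^\ve}(\omega(s))\,ds]\le\varpi(t,\ve)$ with $\varpi(t,\ve)\to0$ as $\ve\downarrow0$ for fixed $t$, since $\pi(B_\ve(\MV))\downarrow\pi(\MV)\le\pi(\partial G)=0$. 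Letting $\ve\downarrow0$ and then $t\to\infty$ yields property~3 for $\P^\pi$-a.e.\ $\omega'$, and an appeal to Remark \ref{WPL} completes the proof.
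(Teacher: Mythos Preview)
Your overall architecture is the same as the paper's: verify that any subsequential limit $\Q^{N,*}_{\omega'}$ solves the stopped submartingale problem and invoke Remark~\ref{WPL}. Properties~1 and~2 are handled adequately; your remark on the discontinuity of $\chi^N$ is even more explicit than the paper's. The genuine gap is in property~3.

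First, any test function $g$ with $-g\in\MH$ must be constant in a neighborhood of \emph{every} point of $\MV$, including $x$ itself; hence $\ML g$ vanishes near $x$, and your claim that the second-order part of $\ML g$ dominates a positive multiple of $\ind_{B_\ve(x)}$ cannot hold. (Incidentally, to make the second-order term positive you need $\phi$ \emph{convex} in $\lan\nvec_x,\cdot-x\ran$, not concave.) What the test function can give you---and what the paper's Lemma~\ref{lem:g0} is designed to deliver---is a bound on the occupation time in an annulus $\{\delta+2\sqrt{\delta}<\lan\nvec_x,\cdot-x\ran<\ve/2\}$, uniformly in $m$ and valid for \emph{every} $\omega'$, because it comes from the submartingale inequality. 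Sending $\delta\downarrow0$ still leaves the single point $\{x\}$ uncovered.

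Second, to cover $\{x\}\subset\partial G$ you invoke stationarity, but ``Fatou's lemma to bring the $\P^\pi$-average inside'' does not produce the pointwise bound $\E^{\Q^{N,*}_{\omega'}}[\,\cdot\,]\le\varpi(t,\ve)$ you assert. Stationarity gives only the unconditional bound $\E^{\Q^{N,m}}[\text{occ.\ in }\MV^\ve]\le(t+1)\pi(\MV^\ve)$, and Fatou goes the wrong way to extract a uniform $\omega'$-wise bound; moreover, if you try to pass to the limit first and disintegrate afterwards, the null set depends on the particular limit point (hence on the $\omega'$-dependent subsequence), so you do not obtain a fixed $F_0$ as the proposition requires. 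The paper resolves this by proving Lemma~\ref{lem:tildef} separately: tying the fattening radius to $m$ (via $B_{1/m}(\partial G)$) and using $\pi(\partial G)=0$, one gets a single null set $F_0\in\MM_0$ outside of which the conditional mesh-occupation of a shrinking neighborhood of $\partial G$ tends to zero along the full sequence. This is then combined $\omega'$-wise with the annulus estimate from the test function to conclude property~3 for every $\omega'\notin F_0$ and every limit point.
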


\begin{proof}[Proof of Theorem \ref{thm:SS}]
To prove Theorem \ref{thm:SS}, it suffices to show that the inequality (\ref{mono0}) is sufficient for
$\pi$ to be a stationary distribution for the unique solution to the
associated submartingale problem. For each $N \in \N$,  by Lemma 11.1.2 of \cite{StrVar06} the stopping
time $\chi^N$ is lower semicontinuous and, for every
 $\omega \in \ccspace$,
$\{ \chi^N(\omega), N \in \N\}$ is a nondecreasing sequence that
increases to $\infty$.
Due to  (\ref{eq-law}), Proposition \ref{lem:ctightxnm} and  Proposition \ref{prop:conv},
Lemma 11.1.1 of \cite{StrVar06}  (applied to $\Q^{N,m}_{\omega'}, \Q^m_{\omega'(0)}$ and
$\chi^N$, respectively, in place of $P^{n,k}, P^n$ and $\tau^k$ in \cite{StrVar06})
shows that for $\P^{\pi}$ almost every
$\omega'$, as $m \ra \infty$,  $\Q^{m}_{\omega'} \Rightarrow
\Q_{\omega'(0)}$.   By the  Portmanteau theorem, this implies that for $\P^{\pi}$ almost every
$\omega'$ and every open set $A \in \MM$,
\[ \liminf_{m \ra \infty}  \Q^{m}_{\omega'} (A) \geq \Q_{\omega'(0)}
(A).
\]
When combined with the representations (\ref{Qpi}) and (\ref{rep-qm})
for $\Q_{\pi}$ and $\Q^{m}$, respectively, and
 Fatou's lemma, this implies that  for every open set $A \in \MM$,
\begin{eqnarray*}
\liminf_{m \ra \infty} \Q^{m} (A) & = &  \liminf_{m \ra \infty}
\int_{\ccspace} \Q^{m}_{\omega'} (A) \P^{\pi} (d\omega') \\ & \geq &
  \int_{\ccspace} \Q_{\omega'(0)} (A) \P^{\pi}
(d\omega')  \\
&= & \int_{\overline{G}} \Q_{z} (A) \pi(dz) =    \Q_{\pi} (A).
\end{eqnarray*}
Another application of the Portmanteau theorem then shows that, as $m
\ra \infty$, $\Q^{m} \Rightarrow \Q_{\pi}$.

Fix $t > 0$ and $f \in {\cal C}_b^1 (\overline G)$.
To show that $\pi$ is a stationary distribution of the unique solution
to the submartingale problem, it suffices to show that
\be
\label{toshow-stat}
 \E^{\Q_{\pi}} \left[ f(\omega(t)) \right] = \int_{\overline G}
f(z) \pi(dz).
\ee
Since $\pi$ is a stationary distribution for each Markov chain, it
follows that
\[  \lim_{m \ra \infty}  \E^{\Q^m} \left[ f(w( \lfloor t/\lambda_m
  \rfloor \lambda_m ))\right] = \int_{\overline G} f(z) \pi(dz),
\]
and the convergence $\Q^m \Rightarrow \Q_{\pi}$ established
above implies
\[  \lim_{m \ra \infty} \left| \E^{\Q_\pi}\left[  f(w(t)) \right] -  \E^{\Q^m} \left[ f(w( t))\right] \right| = 0.
\]
 Thus, to establish (\ref{toshow-stat}),   it suffices to show that for every
$f \in {\cal C}_b^1 (\overline G)$,
\be
\label{toshow-stat2}  \lim_{m\ra \infty} \E^{\Q^m} \left[ \left| f( \omega(t)) - f(w( \lfloor t/\lambda_m
  \rfloor \lambda_m ))\right|\right] = 0.
\ee
Now, for each $m \in \N$, $\delta>0$ and $\rho > 0$, define
\[ K^m_{\rho}  \doteq  \left\{ \omega: \sup_{s: |t-s| \leq \lambda_m}
  |\omega(t) - \omega(s)| < \rho \right\}
\] and \[ K_{\rho,\delta}  \doteq  \left\{ \omega: \sup_{s: |t-s| \leq \delta}
  |\omega(t) - \omega(s)| < \rho \right\}.\]
Then $K^{m}_{\rho},\ K_{\rho,\delta}$ are two open sets in $\MM$ and $K_{\rho,\delta} \subset K^{m}_{\rho}$ if $\lambda_m<\delta$.  On the set $K^m_{\rho}$,
\[  \left| f( \omega(t)) - f(w( \lfloor t/\lambda_m
  \rfloor \lambda_m ))\right|\leq ||f^\prime||_{\infty} \rho.
\] On the other hand, the convergence $\Q^m \Rightarrow \Q_{\pi}$ and an application of the Portmanteau theorem show that \[\limsup_{m\rightarrow \infty}\Q^m(K_{\rho,\delta}^c) \leq \Q_{\pi}(K_{\rho,\delta}^c).\] Since  $\Q_{\pi}$ is a probability measure on $\ccspace$, then $\lim_{\delta\rightarrow 0}\Q_{\pi}(K_{\rho,\delta}^c) = 0.$
Putting these all together, we see that
\begin{eqnarray*}
 \lim_{m\ra \infty} \E^{\Q^m} \left[ \left| f( \omega(t)) - f(w( \lfloor t/\lambda_m
  \rfloor \lambda_m ))\right|\right] & \leq & \lim_{m \ra \infty}
2||f||_{\infty} \Q^m
((K^m_{\rho})^c)  + ||f^\prime||_{\infty} \rho  \\
& \leq & 2||f||_{\infty} \Q^m
(K_{\rho,\delta}^c)  + ||f^\prime||_{\infty} \rho.
\end{eqnarray*}
Sending $\delta \downarrow 0$, then $\rho \downarrow 0$ we obtain (\ref{toshow-stat2}) and, hence,
(\ref{toshow-stat}), which
proves that $\pi$ is a stationary distribution for the submartingale
problem.
\end{proof}

\subsection{Precompactness}
\label{subs:tight}

The proof of Proposition \ref{lem:ctightxnm} is given in Section
\ref{subs:pftight}.  It makes
use of a general sufficient condition for  the precompactness of
a sequence of probability measures, which is first stated in
Section \ref{subs:suffcond}.

\subsubsection{A Sufficient Condition for Precompactness}
\label{subs:suffcond}

 For $\rho>0$, let $\tau^m_0(\rho)\doteq 0$ and for $m\in \N$, let
\be
\label{def-taum}
\tau^m_n(\rho)\doteq \inf\left\{
\ba{c}
t\geq \tau^m_{n-1}(\rho):\ t=j\lambda_m
\mbox{ for some } j\in \Z_+ \mbox{ and } \\
|\omega(t)-\omega(\tau^m_{n-1}(\rho))|\geq \rho/4
\ea
\right\}.
\ee

\begin{lemma}\label{prop:ctight}
A sequence of probability measures  $\{\P^m,m\in \N\}$ is precompact in the Prohorov topology on the Polish space $(\ccspace,\MM)$ if the following conditions hold:
\begin{enumerate}
\item for any $\rho>0$, $\delta=j\lambda_m$ for some $j\in \N$, and
  $n\in \Z_+$, \[\P^{m}(\tau^m_{n+1}(\rho)-\tau^m_n(\rho)\leq
  \delta|\MM_{\tau^m_n(\rho)})\leq \delta A_{\rho/4} \]
$\P^m$ almost surely on $\{\tau^m_n(\rho)<\infty\}$,  where $A_{\rho/4}$ is a
constant depending only on $\rho$;
\item for each $T>0$ and $\varepsilon>0$, \[\lim_{m\rightarrow \infty}\sum_{0\leq j\lambda_m\leq T}\P^{m}(|\omega((j+1)\lambda_m)-\omega(j\lambda_m)|\geq \varepsilon)=0;\]
\item \[\lim_{l\rightarrow \infty}\sup_{m\in \N}\P^{m}(|\omega(0)|\geq l)=0.\]
\end{enumerate}
\end{lemma}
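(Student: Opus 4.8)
The plan is to verify the standard precompactness criterion for probability measures on $\ccspace$ (Theorem~1.3.2 of \cite{StrVar06}): $\{\P^m\}$ is precompact in the Prohorov topology provided (a) $\lim_{l\to\infty}\sup_{m\in\N}\P^m(|\omega(0)|\ge l)=0$, and (b) for every $T>0$ and $\rho>0$,
\[
\lim_{\delta\downarrow 0}\ \limsup_{m\to\infty}\ \P^m\big(\sup\nolimits_{0\le s\le t\le T,\ t-s\le\delta}|\omega(t)-\omega(s)|\ge\rho\big)=0 .
\]
Condition~(a) is exactly hypothesis~(3), so the task is to derive~(b) from hypotheses~(1) and~(2), using also that under $\P^m$ the path is piecewise linear between consecutive points of $\lambda_m\Z_+$ (as is the case for the measures $\Q^{N,m}_{\omega'}$ to which this lemma is applied), so that its oscillation inside a grid cell is bounded by the two adjacent single-step increments.

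Fix $T$ and $\rho$ and set $N^m_T(\rho)\doteq\sup\{n\ge 0:\tau^m_n(\rho)\le T\}$, which is finite since $\tau^m_n(\rho)\ge n\lambda_m$. I would first show $\sup_{m\in\N}\E^{\P^m}[N^m_T(\rho)]<\infty$. Choose $\delta_0=\delta_0(\rho)>0$ with $\delta_0 A_{\rho/4}\le 1/2$ and put $\delta_{0,m}\doteq\lfloor\delta_0/\lambda_m\rfloor\lambda_m$; for all large $m$ one has $\delta_0/2\le\delta_{0,m}\le\delta_0$, so hypothesis~(1) gives $\P^m(\tau^m_{n+1}(\rho)-\tau^m_n(\rho)\le\delta_{0,m}\mid\MM_{\tau^m_n(\rho)})\le 1/2$ on $\{\tau^m_n(\rho)<\infty\}$. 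On $\{\tau^m_k(\rho)\le T\}$ at most $\lfloor T/\delta_{0,m}\rfloor$ of the first $k$ gaps $\tau^m_{n+1}(\rho)-\tau^m_n(\rho)$ can exceed $\delta_{0,m}$, so a Chernoff bound applied to the indicators of $\{\tau^m_{n+1}(\rho)-\tau^m_n(\rho)\le\delta_{0,m}\}$ (conditional means $\le 1/2$) gives $\P^m(\tau^m_k(\rho)\le T)=\P^m(N^m_T(\rho)\ge k)\le C(T,\rho)(3/4)^k$ for all large $m$; for the finitely many remaining $m$ the trivial bound $N^m_T(\rho)\le T/\lambda_m$ suffices. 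Summing over $k$ yields the desired uniform bound on $\E^{\P^m}[N^m_T(\rho)]$.

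Next, by hypothesis~(1) and the tower property, for each $n$ and $\delta>0$ (replacing $\delta$ by the least multiple of $\lambda_m$ above it, at a cost of an extra $\lambda_m$),
\[
\P^m\big(\tau^m_n(\rho)\le T,\ \tau^m_{n+1}(\rho)-\tau^m_n(\rho)\le\delta\big)\le(\delta+\lambda_m)\,A_{\rho/4}\,\P^m(\tau^m_n(\rho)\le T);
\]
summing over $n$ bounds the probability that two consecutive $\tau$-times in $[0,T]$ lie within $\delta$ of each other by $(\delta+\lambda_m)A_{\rho/4}\E^{\P^m}[N^m_T(\rho)]$, which by the previous step tends to $0$ as first $m\to\infty$ and then $\delta\downarrow 0$. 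Hypothesis~(2) gives, for each $\varepsilon>0$, $\lim_{m\to\infty}\P^m(\exists\, j:\, j\lambda_m\le T,\ |\omega((j+1)\lambda_m)-\omega(j\lambda_m)|\ge\varepsilon)=0$, since this probability is dominated by the sum in~(2). It then remains to carry out an elementary geometric argument on the complement $E_m$ of these two exceptional events (for the horizon $[0,T+1]$, with a fixed $\varepsilon=\varepsilon(\rho)<\rho/4$): on $E_m$ every grid value on $[\tau^m_{n-1}(\rho),\tau^m_n(\rho)]$ lies within $\rho/4$ of $\omega(\tau^m_{n-1}(\rho))$, and the increment of $\omega$ across $\tau^m_n(\rho)$ is below $\rho/4+\varepsilon$; since $\delta$-separation of the $\tau$-times forces any interval of length $\le\delta$ to meet at most one of them, piecewise linearity lets one estimate $|\omega(t)-\omega(s)|<\rho$ for all $0\le s\le t\le T$ with $t-s\le\delta$, once $\varepsilon$ is chosen small enough relative to $\rho$. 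Hence $\P^m(\sup_{0\le s\le t\le T,\,t-s\le\delta}|\omega(t)-\omega(s)|\ge\rho)\le\P^m(E_m^c)$, and taking $\limsup_m$ and then $\delta\downarrow 0$ gives~(b), completing the proof.

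The main obstacle is the last, combinatorial step: controlling the modulus of continuity by interlacing the $\lambda_m$-grid, the excursion times $\tau^m_n(\rho)$, and arbitrary times $s,t$, in particular the boundary cases in which $s$ or $t$ lies in the grid cell straddling a $\tau$-time. A secondary nuisance is that hypothesis~(1) is available only for $\delta$ equal to a multiple of $\lambda_m$, which forces the bookkeeping with $\delta_{0,m}$ and with $\delta+\lambda_m$ above; this is harmless because $\lambda_m\downarrow 0$.
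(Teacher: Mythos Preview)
Your proposal is correct in substance but takes a different route from the paper. The paper's proof is a two-sentence citation: it invokes Theorem~1.4.11 of \cite{StrVar06} directly, observing that the extra hypothesis there (uniformity in $n$ of conditions 1.4.8--1.4.9 and of the constants $A_f$) is used only to prove their Lemma~1.4.10, and that the present hypothesis~(1) is precisely the conclusion of that lemma. So the paper simply plugs hypothesis~(1) in as a replacement for Lemma~1.4.10 and is done.

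What you have written is, in effect, a self-contained reproof of Theorem~1.4.11 of \cite{StrVar06}: you go back to the Arzel\`a--Ascoli-type criterion (Theorem~1.3.2 there), bound $\E^{\P^m}[N^m_T(\rho)]$ uniformly via hypothesis~(1), control the spacing of the $\tau^m_n(\rho)$, use hypothesis~(2) to kill large single steps, and then run the geometric argument linking these to the modulus of continuity. This is exactly the skeleton of the Stroock--Varadhan argument, and it is sound; the only caveat is that your last step uses piecewise linearity of the paths between grid points, which is not part of the lemma statement itself but holds for the measures $\Q^{N,m}_{\omega'}$ to which the lemma is applied (and is implicit in the Stroock--Varadhan setup as well). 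Your version buys self-containment; the paper's buys brevity by delegating the combinatorics to the reference.
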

\proof The lemma follows directly from Theorem 1.4.11 of
\cite{StrVar06}. Note that Theorem 1.4.11 of \cite{StrVar06} contains
the additional assumption that the hypotheses 1.4.8 and 1.4.9 (with
$h=h_n$) and the choice of the constants $A_f$ can be made independent
of $n$. However, this assumption is used only to prove Lemma 1.4.10
of \cite{StrVar06}.
Here, we put the result of Lemma 1.4.10 therein as one of the conditions of the lemma. \endproof

\subsubsection{Proof of Proposition \ref{lem:ctightxnm}} \label{subs:pftight}

 Fix $n\in \N$. For each $x\in \overline G$ and $\varepsilon\in (0,\infty)$, let
 $f_{x,\varepsilon}$ be the function from the family defined in
 Assumption \ref{ass:TF}.
The proof consists of three main claims. \\
{\em Claim 1.}  For each $\omega'\in \C[0,\infty)$, $x\in \overline G$ and $\varepsilon\in (0,\infty)$,
\be \label{q2}
f_{x,\varepsilon}(\omega(\left\lceil
  t/\lambda_m\right\rceil\lambda_m))-f_{x,\varepsilon}(\omega(0))-\lambda_m\sum_{j=1}^{\left\lceil
    t/\lambda_m\right\rceil \wedge
  (\kappa^{N,\lambda_m}/\lambda_m)}\ML
f_{x,\varepsilon}(\omega(j\lambda_m)) \ee
is an integrable $\Q^{N,m}_{\omega'}$-supermartingale. \\
{\em Proof of Claim 1.}  Fix $\omega'\in \C[0,\infty)$, $x\in \overline G$ and $\varepsilon\in (0,\infty)$. By the construction of
 $\{X^{\lambda_m}(i\lambda_m),\ i\in \Z_+\}$, under $\Q^{m}$,
 $\{\omega(i\lambda_m),\ i\in \Z_+\}$ is a Markov chain. Thus,  $\{\omega(i\lambda_m),\ i\in \Z_+\}$ is still a Markov chain under $\Q^{m}_{\omega'}$. It follows from (\ref{dis:submart4})
 that for each $f\in \MH\cap \C^2_c(\overline G)$,\be \label{mart5}
 f(\omega(\left\lceil
   t/\lambda_m\right\rceil\lambda_m))-f(\omega(0))-\sum_{j=1}^{\left\lceil
     t/\lambda_m\right\rceil}\ML f(\omega(j\lambda_m))\lambda_m
 \ee is a $\Q^m_{\omega'}$-supermartingale. By property (1) of Assumption \ref{ass:TF}, $f_{x,\varepsilon}$ is a finite or countable sum of functions in $\MH\cap \C^2_c(\overline G)$. Since $f_{x,\varepsilon} \in \C^2_b(\overline G)$, then (\ref{q2}) follows from (\ref{mart5}), an application of $L^1$ convergence theorem for a sequence of supermartingales and an application of optional stopping theorem.

We now show that the sequence
$\{\Q^{N,m}_{\omega'},m\in \N\}$ satisfies all three conditions stated
in Lemma \ref{prop:ctight}, and hence is precompact.  \\
\noi
{\em Claim 2.}   $\{\Q^{N,m}_{\omega'},m\in \N\}$ satisfies condition 1 of Lemma
\ref{prop:ctight}. \\
{\em Proof of Claim 2.}
Fix $\rho>0$ and $\delta>0$ of the form $\delta=k\lambda_m$ for some
$k\in \N$, and $m\in \Z_+$.
For $n, m \in \N$, we recall the definition of the stopping time
$\tau_n^m(\rho)$ given in (\ref{def-taum}), and
 let $\{\Q^{N,m}_{\omega^*,n}\}$ and $\{\Q^{m}_{\omega^*,n}\}$, respectively, denote regular conditional
 probability distributions of $\Q^{N,m}_{\omega'}$ and $\Q^{m}_{\omega'}$ given
 $\MM_{\tau^m_n(\rho)}$. It is easy to see that \[
\Q_{\omega*,n}^{N,m} (A) = \Q_{\omega*,n}^m (A),   \qquad A \in
\MM_{\tau^m_n(\rho)}.
\] Notice that $\{\omega(i\lambda_m),\ i\geq \tau^m_n(\rho)(\omega^*)/\lambda_m\}$ is a Markov chain under $\{\Q^{m}_{\omega^*,n}\}$. The same argument in proving {\em Claim 1}, together with the optional stopping theorem, shows that for each $x\in \overline G$ and $\varepsilon\in (0,\infty)$,
 \be \label{q10}\Q^{N,m}_{\omega^*,n}(\tau^m_n(\rho)(\omega)=\tau^m_n(\rho)(\omega^*),\
 \omega(t)=\omega^*(t),\ 0\leq t\leq \tau^m_n(\rho)(\omega))=1,\ee
 and   \begin{eqnarray} \label{q20} & &
   f_{x,\varepsilon}(\omega(\left\lceil (\tau^m_n(\rho)+
     t)/\lambda_m\right\rceil\lambda_m))-f_{x,\varepsilon}(\omega(\tau^m_n(\rho)))\\
   & & \qquad \qquad -\sum_{j=(\left\lceil
       (\tau^m_n(\rho))/\lambda_m\right\rceil+1)\wedge
     (\kappa^{N,\lambda_m}/\lambda_m)}^{\left\lceil
       (\tau^m_n(\rho)+t)/\lambda_m\right\rceil \wedge
     (\kappa^{N,\lambda_m}/\lambda_m)}\ML
   f_{x,\varepsilon}(\omega(j\lambda_m))\lambda_m
   \nonumber\end{eqnarray}
is a
 $\Q^{N,m}_{\omega^*,n}$-supermartingale. Let
$x^*=\omega^*(\tau^m_n(\rho)(\omega^*))$. If $x^*\in \overline
G\setminus B_N(0)$, under $\Q^{N,m}_{\omega^*,n}$,  by the definition
of  $\kappa^{N,\lambda_m}$,   $\omega(\left\lceil (\tau^m_n(\rho)+
  t)/\lambda_m\right\rceil\lambda_m)$ is identically equal to $x^*$
for all $t\geq 0$. Therefore, $\tau_{n+1}^m(\rho)=\infty$ and
$\Q^{N,m}_{\omega^*,n}(\tau^m(\rho)\leq \delta) = 0$, where
\[\tau^m(\rho)=\inf\left\{
\ba{l} \ds
t\geq 0: \tau^m_n(\rho)(\omega^*)+t=j\lambda_m \mbox{ for some } j\in
\Z_+ \mbox{ and } \\ \ds
  |\omega(\tau^m_n(\rho)(\omega^*)+t)-\omega(\tau^m_n(\rho)(\omega^*))|\geq
  \rho/4
\ea
\right\}.
\]
 On the other hand, suppose that $x^*\in \overline G\cap B_N(0)$. Let
 $\varepsilon = \rho/24$. It is easy to see that
 $f_{x^*,\varepsilon}(x^*)=\ML f_{x^*,\varepsilon}(x^*) = 0$.
 Applying the optional stopping theorem to the supermartingale in
 (\ref{q20}),
we obtain
 \[ \begin{array}{l} \ds
  \E^{\Q^{N,m}_{\omega^*,n}}[f_{x^*,\varepsilon}(\omega(\left\lceil
    (\tau^m_n(\rho)+ \tau^m(\rho)\wedge    \delta)/\lambda_m\right\rceil\lambda_m))] \\ \ds \qquad \leq
  \E^{\Q^{N,m}_{\omega^*,n}} \left[\sum_{j=(\left\lceil
        (\tau^m_n(\rho))/\lambda_m\right\rceil+1)\wedge
      (\kappa^{N,\lambda_m}/\lambda_m)}^{\left\lceil
        (\tau^m_n(\rho)+\tau^m(\rho)\wedge
        \delta)/\lambda_m\right\rceil\wedge
      (\kappa^{N,\lambda_m}/\lambda_m)}\ML
    f_{x^*,\varepsilon}(\omega(j\lambda_m))\lambda_m\right] \end{array}.\]
By property 3 of Assumption \ref{ass:TF}, there exists $C<\infty$
(depending only on $N$ and $\rho$) such that \[\sup_{y\in \overline G}
|\ML f_{x^*,\varepsilon}(y)|<C. \]   By property 2 of
Assumption \ref{ass:TF}, there exists $c > 0$ (depending only on $N$
and $\rho$) such that \[f_{x^*,\varepsilon}(y)\geq
c\ind_{\{|y-x^*|>3\varepsilon\}}, \qquad \mbox{ for all } y\in
\overline G.\]
Moreover, on the set $\{\tau^m(\rho)\leq
\delta\}$,\[ \begin{array}{ll}
  \ds|\omega(\tau^m_n(\rho)(\omega^*)+\tau^m(\rho))-x^*| \geq  \rho/4> 3
  \varepsilon.\end{array} \]
Combining the last four displays, we conclude that
   \be \label{prop1}
\ba{rcl}
\ds c \,\Q^{N,m}_{\omega^*,n}(\tau^m(\rho)\leq \delta)&  \leq & \ds
 \E^{\Q^{N,m}_{\omega^*,n}}\left[f_{x^*,\varepsilon}\left(\omega\left(\left\lceil
   \frac{\tau^m_n(\rho)+ \tau^m(\rho)\wedge \delta}{\lambda_m}\right\rceil \lambda_m\right)\right)\right] \\
& \leq & C\delta,
\ea
\ee
where the last inequality follows since $\delta$ is a multiple of $\lambda_m$.
This shows that condition 1 of Lemma \ref{prop:ctight} holds with $A_{\rho/4}=C/c$, which is a constant depending only on $\rho$ and $N$.

\noi
{\em Claim 3.}   For each $T>0$ and
$\varepsilon>0$, \[\lim_{m\rightarrow \infty}\sum_{0\leq
  j\lambda_m\leq
  T}\Q^{N,m}_{\omega'}(|\omega((j+1)\lambda_m)-\omega(j\lambda_m)|\geq
\varepsilon)=0.\]
{\em Proof of Claim 3.}
For each $0\leq j\lambda_m\leq T$, we have \[\begin{array}{ll} \ds\Q^{N,m}_{\omega'}(|\omega((j+1)\lambda_m)-\omega(j\lambda_m)|\geq \varepsilon)\\ \qquad \ds =\E^{\Q^{N,m}_{\omega'}}\left[\Q^{N,m}_{\omega'}\left(|\omega((j+1)\lambda_m)-\omega(j\lambda_m)|\geq \varepsilon | \MM_{j\lambda_m}\right)\right].\end{array} \]
By replacing $\tau^m(\rho)$ by $j\lambda_m$ in the
argument in {\em Claim 2}, for a regular conditional probability distribution
$\{\Q^{N,m}_{\omega^*,j}\}$  of $\Q^{N,m}_{\omega'}$ given
$\MM_{j\lambda_m}$, we have  that for each $x\in \overline G$ and $\varepsilon\in (0,\infty)$, \be \label{q3}\Q^{N,m}_{\omega^*,j}(\omega(t)=\omega^*(t),\
0\leq t\leq j\lambda_m)=1,\ee and  \begin{eqnarray} \label{q4} & &
  f_{x,\varepsilon}(\omega(\left\lceil (j\lambda_m+
    t)/\lambda_m\right\rceil\lambda_m))-f_{x,\varepsilon}(\omega(j\lambda_m))
  \\ & & \qquad \qquad -\sum_{k=(j+1)\wedge
    (\kappa^{N,\lambda_m}/\lambda_m)}^{\left\lceil
      (j\lambda_m+t)/\lambda_m\right\rceil\wedge
    (\kappa^{N,\lambda_m}/\lambda_m)}\ML
  f_{x,\varepsilon}(\omega(k\lambda_m))\lambda_m
  \nonumber \end{eqnarray} is a
$\Q^{N,m}_{\omega^*,j}$-supermartingale. Let $y^*=\omega^*(j\lambda_m)$, $\varepsilon_{l_1} = \varepsilon/3$ and $\varepsilon_{l_2}=\varepsilon/24$, then $3\varepsilon_{l_2}<\varepsilon_{l_1}/2$. By
an argument similar to that used to derive the first inequality in
(\ref{prop1}), there exists $c>0$ (depending only on $\varepsilon$ and
$N$) such that
 \be
\label{disp:claim3}
c
\Q^{N,m}_{\omega^*,j}(|\omega((j+1)\lambda_m)-\omega(j\lambda_m)|\geq
\varepsilon )\leq
\E^{\Q^{N,m}_{\omega^*,j}}[f_{y^*,\varepsilon_{l_1}}(\omega((j+1)\lambda_m))].
\ee
On the other hand, the supermartingale property in (\ref{q4})  implies
that
\be \label{est:f3}\E^{\Q^{N,m}_{\omega^*,j}}[f_{y^*,\varepsilon_{l_1}}(\omega((j+1)\lambda_m))]\leq
\lambda_m\E^{\Q^{N,m}_{\omega^*,j}}[\ML
f_{y^*,\varepsilon_{l_1}}(\omega((j+1)\lambda_m))].\ee
By property 2 of Assumption \ref{ass:TF}, $\ML
f_{y^*,\varepsilon_{l_1}}(y)=0$ for $|y-y^*|\leq \varepsilon_{l_1}/2$
and $f_{y^*,\varepsilon_{l_2}}(y)\geq c(N,\varepsilon_{l_2})>0$ for
$|y-y^*|>3\varepsilon_{l_2}$. When combined with property 3 of
Assumption \ref{ass:TF}, it follows that there exists
\[ K_N=\max\{C(N,\varepsilon_{l_1})/c(N,\varepsilon_{l_2}),
C(N,\varepsilon_{l_2})\}\] such that $|\ML
f_{y^*,\varepsilon_{l_1}}(y)|\leq K_N f_{y^*,\varepsilon_{l_2}}(y)$
and $|\ML f_{y^*,\varepsilon_{l_2}}(y)|\leq K_N$ for every $y\in
\overline G$. Together with (\ref{disp:claim3}) and (\ref{est:f3}),
applied first with $\varepsilon_{l_1}$ and then with
$\varepsilon_{l_2}$ in place of $\varepsilon_{l_1}$,
this implies that
\begin{eqnarray*}
  c\Q^{N,m}_{\omega^*,j}(|\omega((j+1)\lambda_m)-\omega(j\lambda_m)|\geq
  \varepsilon )&\leq & \lambda_m K_N\E^{\Q^{N,m}_{\omega^*,j}}[
  f_{y^*,\varepsilon_{l_2}}(\omega((j+1)\lambda_m))] \\ &\leq &
  \lambda_m^2 K_N\E^{\Q^{N,m}_{\omega^*,j}}[ \ML
  f_{y^*,\varepsilon_{l_2}}(\omega((j+1)\lambda_m))] \\ &\leq &
 \lambda_m^2 K_N^2.\end{eqnarray*}
Hence, we have \[\sum_{0\leq
  j\lambda_m\leq
  T}\Q^{N,m}_{\omega'}(|\omega((j+1)\lambda_m)-\omega(j\lambda_m)|\geq
\varepsilon)\leq TK_N^2\lambda_m/c,\] which converges to zero as
$m\rightarrow \infty$.  This establishes condition 2 of Lemma
\ref{prop:ctight}.

Finally,
(\ref{initial})  shows that condition 3 of Lemma \ref{prop:ctight} holds automatically.
Thus,  $\{\Q^{N,m}_{\omega'},m\in
\N\}$ satisfies all the conditions of Lemma \ref{prop:ctight} and
therefore is precompact.

\bigskip

\subsection{Convergence of the Approximating Sequence}
\label{subs:pfss}

We now turn to the proof of Proposition \ref{prop:conv}.
That any weak limit of $\{\Q^{N,m}, m \in \N\}$
satisfies the first two properties of
the submartingale problem  will be deduced primarily from results already
obtained in Section \ref{subs:tight}.  The verification of the third
property relies on some preliminary estimates that are first
established in Section \ref{subs-prelim}, and the proof  is completed in Section
\ref{subs-propconv}.


\subsubsection{Preliminary Estimates}
\label{subs-prelim}

\begin{lemma}
\label{lem:tildef}
There exists a set $F_0\in \MM_0$
with $\P^{\pi}(F_0)=0$ such that  for each
$\omega'\in \ccspace\sm F_0$,
\be \label{est:bd}\lim_{m\rightarrow \infty}\E^{\Q^{N,m}_{\omega'}}\left[\lambda_m\sum_{j=0}^{\left\lceil t/\lambda_{m_k}\right\rceil\wedge (\kappa^{N,\lambda_{m}}/\lambda_{m})}
\ind_{B_{\frac{1}{m}}(\partial G)}\left(\omega\left(j\lambda_{m}\right)\right) \right] =0.\ee
\end{lemma}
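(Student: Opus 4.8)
The plan is to derive (\ref{est:bd}) from the stationarity of $\pi$ for the discrete-time Markov chains built in Section~\ref{sec:MC}, together with the standing hypothesis $\pi(\partial G)=0$. First I would integrate the left-hand side against $\P^{\pi}$ and use the disintegration (\ref{rep-qnm}) to reduce the almost-sure statement to an unconditional one. Under $\Q^{N,m}$ the coordinate process is $X^{\lambda_m}(\cdot\wedge\kappa^{N,\lambda_m})$, and on the range $0\le j\le\kappa^{N,\lambda_m}/\lambda_m$ it agrees with $X^{\lambda_m}(j\lambda_m)$; since the summand in (\ref{est:bd}) is nonnegative, dropping the truncation at $\kappa^{N,\lambda_m}/\lambda_m$ only enlarges the sum, so
\[
\int_{\ccspace}\E^{\Q^{N,m}_{\omega'}}\!\left[\lambda_m\sum_{j=0}^{\lceil t/\lambda_m\rceil\wedge(\kappa^{N,\lambda_m}/\lambda_m)}\ind_{B_{1/m}(\partial G)}(\omega(j\lambda_m))\right]\P^{\pi}(d\omega')\le\lambda_m\sum_{j=0}^{\lceil t/\lambda_m\rceil}\P\big(X^{\lambda_m}(j\lambda_m)\in B_{1/m}(\partial G)\big).
\]
By Proposition~\ref{prop:sub}, $\pi$ is a stationary distribution for $\{X^{\lambda_m}(n\lambda_m),\,n\in\Z_+\}$ and $X^{\lambda_m}(0)\sim\pi$, hence $X^{\lambda_m}(j\lambda_m)\sim\pi$ for every $j$, and the right-hand side equals $(\lceil t/\lambda_m\rceil+1)\lambda_m\,\pi(B_{1/m}(\partial G))\le(t+2\lambda_m)\,\pi(B_{1/m}(\partial G))$.

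Next I would observe that the closed sets $B_{1/m}(\partial G)$ decrease, as $m\to\infty$, to $\{y:\dist(y,\partial G)=0\}=\partial G$, so continuity from above of the finite measure $\pi$ gives $\pi(B_{1/m}(\partial G))\downarrow\pi(\partial G)=0$. Writing
\[
a_m(\omega')\doteq\E^{\Q^{N,m}_{\omega'}}\!\left[\lambda_m\sum_{j=0}^{\lceil t/\lambda_m\rceil\wedge(\kappa^{N,\lambda_m}/\lambda_m)}\ind_{B_{1/m}(\partial G)}(\omega(j\lambda_m))\right],
\]
the functions $a_m$ are nonnegative, uniformly bounded by $t+2\lambda_1$, $\MM_0$-measurable (each $a_m$ being a version of a conditional expectation of a bounded measurable functional given $\MM_0$, which by the Markov property depends on $\omega'$ only through $\omega'(0)$), and $\int_{\ccspace}a_m\,d\P^{\pi}\le(t+2\lambda_m)\pi(B_{1/m}(\partial G))\to0$; that is, $a_m\to0$ in $L^1(\P^{\pi})$. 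I would then extract a subsequence $\{m_k\}$ along which $a_{m_k}\to0$ $\P^{\pi}$-a.s., and take $F_0\in\MM_0$ to be the $\P^{\pi}$-null set on which $a_{m_k}\not\to0$; then (\ref{est:bd}), read along $\{m_k\}$, holds for every $\omega'\in\ccspace\setminus F_0$.

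The computation is essentially routine and the stationarity identity is the only substantive input, so there is no serious obstacle here. The one genuine subtlety is that $L^1$-convergence does not yield convergence of the full sequence, so the conclusion is available only along a subsequence $\{m_k\}$ (which is why that index appears in the statement and why the estimates in Section~\ref{subs-propconv} are formulated along $\{m_k\}$). Minor care is also needed to see that $F_0$ can be taken in $\MM_0$ and, in the reduction step, that stopping at $\kappa^{N,\lambda_m}$ preserves the identity $X^{\lambda_m}(j\lambda_m\wedge\kappa^{N,\lambda_m})=X^{\lambda_m}(j\lambda_m)$ for $j\lambda_m\le\kappa^{N,\lambda_m}$; both are immediate.
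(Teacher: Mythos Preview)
Your proposal is correct and follows essentially the same route as the paper: integrate against $\P^{\pi}$ via the disintegration (\ref{rep-qnm}), drop the truncation at $\kappa^{N,\lambda_m}$ by nonnegativity, use stationarity of $\pi$ under $\Q^m$ to bound by $(t+2\lambda_m)\pi(B_{1/m}(\partial G))$, and invoke $\pi(\partial G)=0$. You are in fact more careful than the paper on one point: the paper simply says ``since the integrand in the first term above is nonnegative, this completes the proof,'' whereas you correctly note that $L^1$-convergence of nonnegative functions forces a.s.\ convergence only along a subsequence, and you explicitly extract $\{m_k\}$ and define $F_0$ accordingly---this is the right level of precision for how the lemma is actually used downstream.
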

\begin{proof}
 Since $\pi$ is  a stationary distribution for each Markov chain,
the distribution of $\omega(j \lambda_m)$ under $\Q^m$ is equal to
$\pi$ for every $j, m \in \N$.
Together with (\ref{rep-qnm}) and  (\ref{restrict})  this implies that for
each $t\geq 0$,
\begin{eqnarray*}
&& \limsup_{m\rightarrow \infty}\int_{\ccspace} \E^{\Q^{N,m}_{\omega'}}\left[\lambda_m\sum_{j=0}^{\left\lceil t/\lambda_{m_k}\right\rceil\wedge (\kappa^{N,\lambda_{m}}/\lambda_{m})}
\ind_{B_{\frac{1}{m}}(\partial
  G)}\left(\omega\left(j\lambda_{m_k}\right)\right)
\right]\P^{\pi}( d \omega') \\
&& \qquad
 = \limsup_{m\rightarrow \infty}\E^{\Q^{N,m}}\left[\lambda_m\sum_{j=0}^{\left\lceil t/\lambda_{m}\right\rceil\wedge (\kappa^{N,\lambda_{m}}/\lambda_{m})}
\ind_{B_{\frac{1}{m}}(\partial
  G)}\left(\omega\left(j\lambda_{m}\right)\right)\right] \\
&& \qquad \leq  \limsup_{m\rightarrow \infty}\E^{\Q^{m}}\left[\lambda_m
  \sum_{j=0}^{\lceil t/\lambda_{m} \rceil }
\ind_{B_{\frac{1}{m}}(\partial
  G)}\left(\omega\left(j\lambda_{m}\right)\right)\right] \\
&& \qquad \leq t \lim_{m\rightarrow \infty} \int_{\partial G}
    \ind_{B_{\frac{1}{m}} (\partial G)} (z ) \, \pi (dz) \\
&& \qquad = 0,
\end{eqnarray*}
where the last equality  follows from the assumption
that $\pi (\partial G) = 0$.
Since the integrand in the first term above is nonnegative, this
completes the proof of the lemma.
\end{proof}
\bigskip

The following family of test functions will be used in the proof of
Theorem \ref{thm:SS}.
The proof relies on Assumption \ref{ass:V} and is purely analytic, and
hence is relegated to Appendix \ref{sec:lemg0}.

\begin{lemma} \label{lem:g0}
Suppose Assumption \ref{ass:V} holds.  For $x \in \MV$,  let
$\nvec_x\neq 0$ be the unit vector stated in Assumption \ref{ass:V} and let
$h(y)\doteq\lan \nvec_x,y-x\ran$ on $\overline G$.
Then for each $x \in \MV$,
there exist constants $c>0$ and $C>0$ such that for all
$\varepsilon>0$ sufficiently small and
$\delta\in (0,\varepsilon)$, there exists a non-negative
function $g_{\delta,\varepsilon}\in \C^2_c(\overline G)\oplus \R$ with
$-g_{\delta,\varepsilon}\in \MH$ such that
\begin{enumerate}
\item
$\sup_{y\in \overline G} g_{\delta,\varepsilon}(y)\leq C\varepsilon$;
\item $\sup_{y\in \overline G}|\nabla g_{\delta,\varepsilon}(y)|\leq
  C\sqrt{\varepsilon}$;
\item
$\sup_{y\in \overline G}|\lan b(y), \nabla g_{\delta,\varepsilon}(y)
\ran|\leq C\sqrt{\varepsilon}$;
\item for each  $y\in \overline G \cap B_{r_x}(x)$,
\begin{enumerate}
\item 
 $\sum_{i,j=1}^J a_{ij}(y)\frac{\partial^2g_{\delta,\varepsilon}(y)}{\partial
x_i\partial x_j} \geq c$ if
$\delta+2\sqrt{\delta}<h(y)<\varepsilon/2$,
\item
$\left|\sum_{i,j=1}^J a_{ij}(y)\frac{\partial^2g_{\delta,\varepsilon}(y)}{\partial
x_i\partial x_j}\right|\leq C\sqrt{\varepsilon}$ if   $h(y)\geq
\varepsilon$;
\item
$\sum_{i,j=1}^J a_{ij}(y)\frac{\partial^2g_{\delta,\varepsilon}(y)}{\partial
x_i\partial x_j} \geq 0$ otherwise.
\end{enumerate}
\end{enumerate}
 \end{lemma}
\bigskip

\subsubsection{Proof of Proposition \ref{prop:conv}}
\label{subs-propconv}

Let $F_0$ be as in Lemma \ref{lem:tildef}.
 Fix $\omega'\notin F_0$, and
let $\Q^{N,*}_{\omega'}$ be a weak limit of a convergent
subsequence $\{\Q^{N,m_k}_{\omega'},k\in \N\}$ in $\{Q^{N,m}_{\omega'}, m
\in \N\}$.
It suffices to show that $\Q^{N,*}_{\omega'}$ satisfies the three
properties of the  submartingale
problem stopped at $\chi^N$ with initial condition $\omega'(0)$ (see
Definition \ref{def-smgl}).
The first property follows directly from (\ref{initial}).
Also, by Proposition
\ref{prop:sub}, for each $k\in \N$,
for every $f\in \C_c^2(\overline G)$ such that $-f\in \MH$,
\be \label{dis:submart3} f(\omega(\left\lceil
  t/\lambda_{m_k}\right\rceil\lambda_{m_k}))-\lambda_{m_k}\sum_{l=1}^{\left
\lceil
      t\wedge \kappa^{N,\lambda_{m_k}}/\lambda_{m_k} \right\rceil}\ML
f(\omega(l\lambda_{m_k}))\ee   is a
$\Q^{N,m_k}_{\omega'}$-submartingale. Since $\Q^{N,m_k}_{\omega'}
\Rightarrow \Q^{N,*}_{\omega'}$, a standard convergence argument
together with an application of the optional stopping theorem
shows that for every $f\in \C_c^2(\overline G)$ such that $-f\in \MH$,\[f(\omega(t\wedge \chi^N))-\int_0^{t\wedge \chi^N}\ML
f(\omega(s))\,ds \] is a $\Q^{N,*}_{\omega'}$-submartingale, which
establishes the second property of the local submartingale problem.

We now turn to the proof of the third property.   Fix $x \in \MV$.
Let the constants $C, c$, the function $h$ and, for  each $\varepsilon>0$ sufficiently small and $\delta\in
(0,\varepsilon)$,  the function $g_{\delta,\varepsilon}$ be the
associated quantities from Lemma \ref{lem:g0}.
Since $g_{\delta,\varepsilon}$ is a nonnegative function that lies in ${\cal C}_c^2(\overline G)$,
Proposition \ref{prop:sub}, with $f$ replaced by $g_{\delta,\ve}$, and
property 1 of Lemma \ref{lem:g0} show that
\[
\begin{array}{l} \ds
  \E^{\Q^{N,m_k}_{\omega'}}\left[\lambda_{m_k}\sum_{l=1}^{\left
\lceil
      t\wedge \kappa^{N,\lambda_{m_k}}/\lambda_{m_k} \right\rceil}
\ML
    g_{\delta,\varepsilon}(\omega(l\lambda_{m_k}))\right]
  \\\ds \qquad \leq
  \E^{\Q^{N,m_k}_{\omega'}}[g_{\delta,\varepsilon}(\omega(\left\lceil
    t/\lambda_{m_k}\right\rceil\lambda_{m_k}))]-g_{\delta,\varepsilon}(\omega'(0))
\leq C \ve.
\end{array}
\]
Substituting the definition of the operator $\ML$ in (\ref{operL}),
and then using  property 2 of Lemma \ref{lem:g0},
 this implies that
\be
\label{est-aij}
\begin{array}{l}
\ds \E^{\Q^{N,m_k}_{\omega'}}\left[\lambda_{m_k} \sum_{l=1}^{\left
\lceil
      t\wedge \kappa^{N,\lambda_{m_k}}/\lambda_{m_k} \right\rceil}
\sum_{i,j=1}^J a_{ij}(\omega(l\lambda_{m_k}))\frac{\partial^2g_{\delta,\varepsilon}(\omega(l\lambda_{m_k}))}{\partial
x_i\partial x_j}\right] \\\ds \qquad\leq2C\varepsilon-
2\E^{\Q^{N,m_k}_{\omega'}}\left[\lambda_{m_k}\sum_{l=1}^{\left
\lceil
      t\wedge \kappa^{N,\lambda_{m_k}}/\lambda_{m_k} \right\rceil}
\lan b(\omega(l\lambda_{m_k})), \nabla
  g_{\delta,\varepsilon}(\omega(l\lambda_{m_k}))\ran \right]
\\\ds \qquad \leq  2C\varepsilon +2C\sqrt{\varepsilon}
(t+\lambda_{m_k}).
\end{array}
\ee
For each $k \in \N$,
using properties 4(a),  4(b) and 4(c) of Lemma \ref{lem:g0} and then
(\ref{est-aij}), we have
\begin{eqnarray*}
&& c\E^{\Q^{N,m_k}_{\omega'}}\left[\lambda_{m_k} \sum_{l=1}^{\left
\lceil
      t\wedge \kappa^{N,\lambda_{m_k}}/\lambda_{m_k} \right\rceil}\ind_{B_{r_x}(x)}(\omega(l\lambda_{m_k}))
\ind_{(\delta + 2 \sqrt{\delta}, \ve/2)} (h(\omega (l \lambda_{m_k}))) \right]
\\
& \leq &\E^{\Q^{N,m_k}}_{\omega'}\left[\lambda_{m_k}\sum_{l=1}^{\left
\lceil
      t\wedge \kappa^{N,\lambda_{m_k}}/\lambda_{m_k} \right\rceil}
\sum_{i,j=1}^J a_{ij}(\omega(l\lambda_{m_k}))\frac{\partial^2g_{\delta,\varepsilon}(\omega(l\lambda_{m_k}))}{\partial
x_i\partial x_j}\ind_{(0, \ve)} (h(\omega (l\lambda_{m_k}))) \right] \\
& \leq &\E^{\Q^{N,m_k}}_{\omega'}\left[\lambda_{m_k}\sum_{l=1}^{\left
\lceil
      t\wedge \kappa^{N,\lambda_{m_k}}/\lambda_{m_k} \right\rceil}
\sum_{i,j=1}^J a_{ij}(\omega(l\lambda_{m_k}))\frac{\partial^2g_{\delta,\varepsilon}(\omega(l\lambda_{m_k}))}{\partial
x_i\partial x_j}\right] \\ & & \qquad +  C \sqrt{\ve} (t + \lambda_{m_k}) \\
& \leq & 2C\varepsilon +3C\sqrt{\varepsilon}
(t+\lambda_{m_k}).
\end{eqnarray*}
Taking limits as $\delta \downarrow 0$ in the last display, we obtain
\begin{eqnarray*}
&& \E^{\Q^{N,m_k}}_{\omega'}\left[\lambda_{m_k}\sum_{l=1}^{\left
\lceil
      t\wedge \kappa^{N,\lambda_{m_k}}/\lambda_{m_k} \right\rceil}\ind_{B_{r_x}(x)}(\omega(l\lambda_{m_k}))
\ind_{(0, \ve/2)} (h(\omega (l \lambda_{m_k}))) \right] \\ & &  \quad \leq \dfrac{C}{c}
\left(2\varepsilon +3\sqrt{\varepsilon}(t+\lambda_{m_k})\right).
\end{eqnarray*}
Since, on $B_{r_x}(x)$,  $h(y) = 0$ if and only if $y = x$ and $x \in {\cal V}
\subset \partial G$,
combining the above inequality with
Lemma \ref{lem:tildef} we obtain
\begin{eqnarray*}
&&\lim_{k \ra \infty} \E^{\Q^{N,m_k}_{\omega'}}\left[\lambda_{m_k}\sum_{l=1}^{\left\lceil
      t/\lambda_{m_k}\right\rceil\wedge
    (\kappa^{N,\lambda_{m_k}}/\lambda_{m_k})}\ind_{B_{r_x}(x)}(\omega(l\lambda_{m_k}))
 \ind_{[0, \ve/2)} (h(\omega (l \lambda_{m_k}))) \right] \\ &&\quad \leq   \dfrac{C}{c}
\left(2\varepsilon +3\sqrt{\varepsilon}t\right).
 \end{eqnarray*}
Notice that  
\[ 
 \begin{array}{l}
 \ds\E^{\Q^{N,m_k}_{\omega'}}\left[\int_0^{t}\ind_{B_{r_x}(x)}(\omega(\lceil s/\lambda_m \rceil \lambda_m))\ind_{[0, \ve/2)} (h(\omega (\lceil s/\lambda_m \rceil \lambda_m)))ds \right]  \\ \ds\quad \leq
\E^{\Q^{N,m_k}_{\omega'}}\left[\int_0^{t \wedge  \kappa^{N,\lambda_{m_k}}}\ind_{B_{r_x}(x)}(\omega(\lceil s/\lambda_m \rceil \lambda_m))\ind_{[0, \ve/2)} (h(\omega (\lceil s/\lambda_m \rceil \lambda_m)))ds \right]  \\ \ds \quad \leq \E^{\Q^{N,m_k}_{\omega'}}\left[\lambda_{m_k}\sum_{l=1}^{\left\lceil
      t/\lambda_{m_k}\right\rceil\wedge
    (\kappa^{N,\lambda_{m_k}}/\lambda_{m_k})}\ind_{B_{r_x}(x)}(\omega(l\lambda_{m_k}))
 \ind_{[0, \ve/2)} (h(\omega (l \lambda_{m_k}))) \right],
\end{array}
 \]
where the first inequality follows from the fact that under $\Q^{N,m_k}_{\omega'}$, $h(\omega(t))>\varepsilon/2$ when $t\geq \kappa^{N,\lambda_{m_k}}$ and $\omega(t)\in B_{r_x}(x)$ and the second inequality follows the fact that the integrand is non-negative. Let $f\in \C(\R)$ be such that $0\leq f\leq \ind_{(-\varepsilon/2,\varepsilon/2)}$ and $f(0)=1$ and let $g \in\C(\R^J)$ be such that $0\leq g\leq \ind_{B_{r_x}}$ and $g(x)=1$. Since $h$ is non-negative, we have \begin{eqnarray*} && \E^{\Q^{N,m_k}_{\omega'}}\left[\int_0^{t}(gf) (h(\omega (\lceil s/\lambda_m \rceil \lambda_m)))ds \right]\\ && \qquad \leq \E^{\Q^{N,m_k}_{\omega'}}\left[\int_0^{t}\ind_{B_{r_x}(x)}(\omega(\lceil s/\lambda_m \rceil \lambda_m))\ind_{[0, \ve/2)} (h(\omega (\lceil s/\lambda_m \rceil \lambda_m)))ds \right].\end{eqnarray*}
 Since $\Q^{N,m_k}_{\omega'} \Rightarrow  \Q^{N,*}_{\omega'}$ as $k \ra \infty$,
by applying the Skorokhod representation theorem and Lemma A.4 of \cite{KanWil06} to the left-hand side of the above inequality and using the fact that $h(y)=0$ if and only if $y=x$ and $y\in B_{r_x}(x)$,
we conclude that
 \[\E^{\Q^{N,*}_{\omega'}}\left[\int_0^{t\wedge \chi^N}
\ind_{\{x\}}(\omega(s))\,ds \right]\leq \frac{1}{c}\left(2C\varepsilon
+2C\sqrt{\varepsilon} t\right).\]
Sending first $\varepsilon\rightarrow 0$ on the right-hand side of
the above inequality and then $t\rightarrow \infty$, we have
\[\E^{\Q^{N,*}_{\omega'}}\left[\int_0^{\chi^N} \ind_{\{x\}}(\omega(s))\,ds
\right]=0. \]
Because there are only a finite number of $x \in \MV$, summing the above
over $x \in \MV$ shows that $\Q^{N,*}_{\omega'}$ also satisfies
the third property in Definition \ref{def-smgl} with initial condition
$z = \omega'(0)$.  Since, for each $N \in \N$, the unique solution $\chi^N_z$ to the local submartingale
problem stopped at $\chi^N$ coincides on $\MM_{\chi^N}$ with the unique solution
$\Q_{z}$  to the submartingale problem with the same initial
condition,
this completes the proof of the proposition. \endproof

\section{Proof of Theorem \ref{thm:test}} 
\label{sec-proof2}

Since Assumption \ref{ass:V} follows directly from Assumption 2', 
 the proof of Theorem \ref{thm:test} essentially reduces to the
 construction of a test function that satisfies  Assumption \ref{ass:TF}.   
This construction involves patching together certain local test
functions, whose existence is first established in Proposition
\ref{prop-testfn}. The  proof of Proposition \ref{prop-testfn} is purely analytic, and is thus relegated to Appendix
\ref{ap-testfn}.

\begin{prop}
\label{prop-testfn}
Suppose $(G,d(\cdot))$ is piecewise ${\cal C}^1$ with continuous
reflection and $\MV \subset \partial G$ satisfies Assumption 2'.  
Then for every $x \in \overline{G}$ and $\varepsilon > 0$, there exists 
 a nonegative function $g_{x}^\ve \in {\cal H}\cap {\cal C}_c^2(\overline{G})$  such that 
\begin{enumerate}
\item 
$\supp[ g_{x}^\ve ]\cap \overline G \subseteq B_{\ve} (x) \cap \overline G$; 
\item 
there exists an open neighbourhood  ${\cal O}_{x, \ve}$ of $x$ such that 
\[  g_{x}^\ve (y)  > \frac{1}{2} \, \mbox{ for all }  y \in
{\cal O}_{x, \ve}.;
\]
\item there exists a constant $A(x,\ve)>0$ such that \[\sup_{y\in \overline G}|g_{x}^\ve(y)|\leq A(x,\ve), \ \sup_{y\in
\overline G}|\nabla g_{x}^\ve(y)|\leq A(x,\ve),\ \sup_{y\in
\overline G}\sum_{i,j=1}^J\left|\frac{\partial^2g_{x}^\ve(y)}{\partial
y_i\partial y_j}\right|<A(x,\ve).\]
\end{enumerate}
Moreover, when $(G, d(\cdot))$ is a polyhedral domain with piecewise constant
reflection, for every $\delta > 0$  there exists an open neighborhood
${\cal O}_{\emptyset,\delta,\ve}$ and a constant $A(\emptyset,\delta,\ve)$ such that 
${\cal O}_{x, \ve}={\cal O}_{\emptyset,\delta,\ve}$ and
$A(x,\ve)=A(\emptyset,\delta,\ve)$ for each  $x \in G \sm N_{\delta}
(\partial G)$  and for every $x \in \partial G \sm N_{\delta}
(\MV\cup\cup_{i\notin \MI(x)}(\partial G\cap \partial G_i))$,  
 there exists a finite collection of open neighborhoods and constants 
$\{{\cal O}_{F, \delta,\ve},\ A(F,\delta, \ve):\  F\in \{\MI(z):\
z\in \partial G \setminus \MV\}\}$ such that ${\cal O}_{x,\ve} = {\cal
  O}_{{\cal I}(x), \delta,\ve}$ and 
$A(x,\ve)=A(\MI(x),\delta,\ve)$.
\end{prop}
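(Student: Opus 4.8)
The plan is to construct, for each $x \in \overline{G}$, a local test function $g_x^\ve$ supported in $B_\ve(x) \cap \overline{G}$, by distinguishing cases according to where $x$ sits relative to the boundary structure. For $x$ in the interior $G$ (more precisely for $x \in G$ at distance $> \ve$ from $\partial G$, or for small enough $\ve$), one simply takes a smooth bump function depending only on $|y-x|$: since $d(y) = \{0\}$ on $G^\circ$ and $g_x^\ve$ is constant (namely $0$) near $\MV$ because its support avoids $\partial G \supset \MV$, the condition $-g_x^\ve \in \MH$ (equivalently $\langle d, \nabla g_x^\ve(y)\rangle \le 0$ for $d \in d(y)$, $y \in \partial G$) is vacuous, and properties (1)--(3) follow by direct estimation of a standard mollified bump. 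For $x$ on the smooth part $\partial G_i \cap \MS$ (so $\MI(x) = \{i\}$ and Assumption 2' need not be invoked beyond the completely-$\MS$ condition at $x$), one builds $g_x^\ve$ as a bump composed with a function that is increasing in the direction $-\gamma^i$ near $x$: concretely something like $\psi(|y-x|)\,\phi(-\langle \gamma^i(x), y - x\rangle)$ with $\psi$ a radial cutoff and $\phi$ nonnegative, nondecreasing, flat past a threshold, chosen so that $\langle d, \nabla g_x^\ve(y)\rangle \le 0$ for $d \in d(y)$; here one uses $\langle n^i(x), \gamma^i(x)\rangle = 1$ (the normalization \eqref{normalize}) and continuity of $n^i, \gamma^i$ to ensure the inner products stay controlled on a small ball. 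The delicate case is $x \in \partial G \sm \MU \subset \MV$, or more generally $x$ near the non-smooth set: here one uses the unit vector $\nvec_x$ and the geometric inclusions \eqref{include} from Assumption 2' — the point is that $d(y) \subset \Theta_x$ (part (2) of Assumption 2') for $y$ near $x$, so a function of the form $\psi(|y-x|)\,\phi(-\langle \nvec_x, y - x\rangle)$ again has the correct sign of $\langle d, \nabla g_x^\ve\rangle$, and since $\langle \nvec_x, y - x\rangle \ge \alpha_x |y-x|$, shrinking $\ve$ below $r_x$ makes the construction local and consistent with $g_x^\ve$ being constant near the relevant part of $\MV$.

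Next I would verify the three asserted properties for each case. Property (1) is immediate from the choice of the radial cutoff $\psi$ with $\supp \psi \subset [0,\ve)$. Property (2) follows because, near $x$, $g_x^\ve$ agrees with a function bounded below by a positive constant (the "plateau" of $\phi$ times $\psi(0) > 0$) on a small open neighborhood $\MO_{x,\ve}$ determined by the thresholds in $\psi$ and $\phi$; after rescaling the amplitude one arranges this lower bound to be $> 1/2$. Property (3) — the uniform bounds on $g_x^\ve$, $\nabla g_x^\ve$ and the Hessian — comes from the explicit form of the building blocks: $\psi, \phi$ are fixed smooth functions of one variable with bounds depending only on $\ve$ (and on the chosen thresholds), and $y \mapsto \langle \nvec_x, y - x\rangle$ (or $\langle \gamma^i(x), y-x\rangle$) is affine, so the chain rule produces constants $A(x,\ve)$ depending on $x$ only through $\ve$, $\alpha_x$, $r_x$, and the local moduli of continuity of $n^i(\cdot), \gamma^i(\cdot)$ near $x$. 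The condition $-g_x^\ve \in \MH$, i.e. membership in $\MH \cap \C_c^2(\overline G)$, is the conjunction of compact support (from (1)), being constant in a neighborhood of $\MV$ (arrange support to avoid $\MV$ except possibly near the single point $x$, where $g_x^\ve$ is identically $0$ on one side by the flatness of $\phi$ at $0$), and the reflection inequality just discussed.

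Finally, for the uniformity claim in the polyhedral-with-piecewise-constant-reflection case: here $n^i$ and $\gamma^i$ are genuinely constant vectors, so $\Theta_x$, $\nvec_x$, and all the geometric data depend on $x$ only through the combinatorial type $\MI(x)$ (which face, or intersection of faces, $x$ lies on) and not on the particular point. Restricting to $x \in G \sm N_\delta(\partial G)$, the interior construction uses a fixed bump (up to translation), giving a single $\MO_{\emptyset,\delta,\ve}$ and $A(\emptyset,\delta,\ve)$; restricting to $x \in \partial G \sm N_\delta(\MV \cup \bigcup_{i \notin \MI(x)} \partial G \cap \partial G_i)$ ensures $\MI(y) \subset \MI(x)$ for all $y$ within distance $\delta$ (so no new constraints sneak in) and ensures $x$ stays a fixed distance from $\MV$, so the construction depends only on the finite set of types $F \in \{\MI(z) : z \in \partial G \sm \MV\}$, yielding the finite collection $\{\MO_{F,\delta,\ve}, A(F,\delta,\ve)\}$ by translation-invariance of the polyhedral geometry. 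The main obstacle I anticipate is the non-smooth boundary case: one must check carefully that the single vector $\nvec_x$ (rather than a normal to an individual face) can simultaneously handle all reflection directions $d \in d(y)$ for $y$ ranging over $\overline G \cap B_{r_x}(x)$, and that the "flatten near $x$" device is compatible with $g_x^\ve$ being genuinely constant — not merely small — in a neighborhood of $\MV$; this is exactly where Assumption 2', parts (1), (2) and the two-sided inclusion \eqref{include}, must be used in full, and getting the directional-derivative sign right on the cusp/corner is the crux.
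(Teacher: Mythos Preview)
Your interior case is fine and matches the paper. The boundary cases, however, contain a genuine gap: the product ansatz $g_x^\ve(y)=\psi(|y-x|)\,\phi(\langle v,\,y-x\rangle)$ with a radial cutoff $\psi$ cannot be made to satisfy the reflection inequality $\langle d,\nabla g_x^\ve(y)\rangle\le 0$ for all $d\in d(y)$, $y\in\partial G$. Indeed,
\[
\langle d,\nabla g_x^\ve(y)\rangle
=\psi'(|y-x|)\,\frac{\langle d,\,y-x\rangle}{|y-x|}\,\phi(\cdot)
+\psi(|y-x|)\,\phi'(\cdot)\,\langle d,v\rangle,
\]
and on the region where $\phi$ is already flat (so $\phi'=0$) but $\psi'<0$, only the first term survives; since $\langle d,\,y-x\rangle$ has no definite sign along $\partial G$, the inequality fails on part of $\partial G\cap\supp(g_x^\ve)$. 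No choice of thresholds repairs this: a radial cutoff inevitably contributes a gradient direction that is uncorrelated with the reflection cone. Separately, the case $x\in\partial G\setminus\MV$ with $|\MI(x)|>1$ (intersections of faces that are \emph{not} in the finite set $\MV$) is not covered by your dichotomy ``single face'' versus ``$\MV$'', and the vector $\nvec_x$ from Assumption~2' is only furnished for $x\in\MV$.

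The paper's remedy is to dispense with the radial cutoff altogether and to encode both the support control \emph{and} the reflection inequality in a single scalar: the (mollified) distance to a translated, truncated cone $x+rL_{x,\delta_x}$ generated by $K_x=\{-\sum_{i\in\MI(x)}a_i\gamma^i(x):a_i\ge0,\sum a_i=1\}$. One then sets $g_{x,r}=\zeta_x\circ k_{x,r}$ with $\zeta_x$ a one-variable cutoff. The point is that the gradient of the distance to this cone automatically satisfies $\langle\nabla k_{x,r}(y),\gamma^i(y)\rangle\ge\theta_x/r>0$ for every $i\in\MI(y)$ (this is Lemma~\ref{prop:TF}, imported from \cite{Ram06}), so $\langle d,\nabla g_{x,r}(y)\rangle\le0$ follows from $\zeta_x'\le0$; and the geometric Lemma~\ref{lem:A1} together with \eqref{include1}--\eqref{include2} guarantees that the relevant sublevel sets of $k_{x,r}$ already lie in $B_r(x)\cap\overline G$, so no separate radial truncation is needed. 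For $x\in\MV$ the same idea works with the half-space $\Theta_x$ in place of the cone (Proposition~\ref{lem:test1}), and here the two-sided inclusion \eqref{include} of Assumption~2' is exactly what replaces the radial cutoff. Finally, note that for $x\in\MV$ your function must be constant equal to a value $>1/2$ near $x$ (to satisfy property~(2)), not identically $0$ as you wrote.
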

\bigskip

We now use Proposition \ref{prop-testfn} to prove Theorem
\ref{thm:test}.  Suppose first that $(G,d(\cdot))$ is piecewise ${\cal C}^1$ with continuous
reflection and $G$ is bounded.
Fix $\varepsilon>0$ and $N\in \N$.  
 Let the family of functions 
$g_{x}^\ve$ and associated sets ${\cal O}_{x,\ve}$, $x \in
\overline{G}$,  be as in Proposition
\ref{prop-testfn}.  
For each $z \in \overline{G}$, 
the set $\overline
G \cap\{x\in \R^J:\ |x-z|\geq 2\varepsilon\}$ is
compact. Therefore there exists a finite set 
\[  S_z = \{x_j\in \overline G,  |x_j-z|\geq 2\varepsilon, j=1,\cdots, l\} \]
such that 
\be
\label{ox-cover}
\cup_{x_j \in S_z} {\cal O}_{x_j,\ve}  \supset \overline
G \cap\{x\in \R^J:\ |x-z|\geq 2\varepsilon\}. 
\ee
Now, for each $z\in \overline G$, define 
\be \label{h}
h_{z}^\ve(y)=\sum_{x_j\in S_{z}}g_{x_j}^{\ve} (y), \qquad y\in \overline G.
\ee 
Now,  since $\overline G \cap B_N(0)$
is compact, there exists a finite cover of $\overline G \cap
B_N(0)$ of the form $\{B_{\varepsilon/2}(z_k), z_k\in
\overline G \cap B_N(0), k=1,\cdots, m_N\}$. For each $x\in
\overline G\cap B_N(0)$, if $x$ is contained in
$B_{\varepsilon/2}(z_k)$ for only one $k$, then  define
$f_{x,\varepsilon}=h_{z_k}^\ve$, and if $x$ is contained in
$B_{\varepsilon/2}(z_k)$ for more than one $k$, define
$f_{x,\varepsilon}=h_{z_{k^*}}^\ve$ where $k^*$ is the smallest index among
those $k$'s.  

We now verify that the functions $\{f_{x,\varepsilon}, x \in \overline{G} \cap B_N(0),
\varepsilon > 0\}$ satisfy the properties stated in Assumption
\ref{ass:TF}.   Since each $g_{x}^\ve$ lies in ${\cal H} \cap {\cal C}_c^2(\overline{G})$ by Proposition
\ref{prop-testfn}, 
it follows that each $h_z^\ve$, and therefore each 
$f_{x,\varepsilon}$, is a finite sum of functions in $ {\cal H}$, and
the first property follows. 
Now, fix $x \in \overline{G}\cap B_N(0)$ and let $z_k$  be such that 
$|z_k - x| < \ve/2$ and $f_{x,\ve}  = h_{z_k}^\ve$.  
Note that $|y-x| > 3 \ve$ implies $|y-z_k| > 2 \ve$, which in turn 
implies that $y \in \cup_{x_j \in S_{z_k}^N} {\cal O}_{x_j,\ve}$ due to 
(\ref{ox-cover}).     The definition (\ref{h}) of $h_{z_k}^\ve$, the fact that
each $g_{x_j}^\ve$ is nonnegative and property 2 of Proposition
\ref{prop-testfn} then imply that $h_{z_k}^\ve (y) > 1/2$. 
On the other hand, suppose $|y-x| < \ve/2$.  Then $|y-z_k| < \ve$. 
However, the inequalities $|x_j - z_k| > 2 \ve$ and  
$\supp [g_{x_j}] \subset B_{\ve}(x)$ (with the latter inequality resulting from  property 1 of Proposition
\ref{prop-testfn}) for all $x_j \in S_{z_k}^N$
imply that $h_{z_k} (y) = 0$ for $|y-z_k| < \ve$.  Thus, we have 
shown that $f_{x,\ve}$ satisfies property 2 of Assumption
\ref{ass:TF}. 
Lastly, since each $g_{x}^{\ve} \in {\cal C}_c^2 (\overline{G})$ and $b$
and $\sigma$ are continuous, there exists $C_{x,\ve} < \infty$ such that 
$|\ML g_{x}^{\ve} (y)| \leq C_{x,\ve}$ for every $y \in \overline{G}$.  
Thus, if we set $C (N,\ve) \doteq \max_{k=1, \ldots, m_N} \sum_{x \in
  S_{z_k}} C_{x,\ve}$, it is clear that $f_{x,\ve}$ satisfies
property 3 of Assumption \ref{ass:TF}. 
This completes the proof of Theorem \ref{thm:test} for the case of a
bounded domain.  

Next, suppose that $(G, d(\cdot))$ is a polyhedral domain with piecewise constant
reflection. Since $G$ is a polyhedron and the set $\MV$ is finite, it follows from the second part of Proposition \ref{prop-testfn} that  
there exist a constant integer $m(z,\ve)>0$, a constant $C_{z,\ve} < \infty$ and a countable set 
\[  S_z = \{x_j\in \overline G:  |x_j-z|\geq 2\varepsilon, j\in \N\} \]
such that 
\be
\label{ox-cover2}
\cup_{x_j \in S_z} {\cal O}_{x_j,\ve}  \supset \overline
G \cap\{x\in \R^J:\ |x-z|\geq 2\varepsilon\},
\ee
for each $x \in \overline G$ such that  $|x-z|\geq 2\varepsilon$,  
there are at most $m(z,\ve)$ open sets  in $\{{\cal O}_{x_j,\ve}:\ x_j\in S_z\}$ that contain $x$, and 
 $\sup_{x_j\in S_z}|\ML g_{x_j}^{\ve} (y)| \leq C_{z,\ve}$ for every $y\in \overline G$.
Thus, we can define the function $h_{z}^\ve$ as in (\ref{h}) for each $z\in \overline G$ and follow the previous argument for the case when $(G,d(\cdot))$ is piecewise ${\cal C}^1$ with continuous
reflection and $G$ is bounded to complete the proof of Theorem \ref{thm:test} with $c(N,\ve)=1/2$ and $C(N,\ve)\doteq \max_{k=1, \ldots, m_N} m(z_k,\ve) C_{z_k,\ve}$. \endproof

\appendix

\section{Construction of Local Test Functions} 
\label{ap-testfn} 
 
This section is devoted to the proof of Proposition
\ref{prop-testfn}.   
Consider $(G,d(\cdot))$ that are piecewise $\C^1$ with continuous 
reflection.  
We first construct a family of test functions $g_{x,r}$, for
$x \in \overline{G}$ and sufficiently small $r > 0$. 
The nature of the construction is different for the cases $x \in G$, $x \in
\partial G\setminus \MV$ or $x \in  \MV$, and is presented below in Propositions 
\ref{lem:TF3}, \ref{lem:test1} and \ref{prop:TF2}, respectively. 
The proof of Proposition \ref{prop-testfn} is given at the end of the
section.

\begin{prop} \label{lem:TF3}
For each $x\in G$,  there is a constant $A_x <  \infty$ such that 
 for every $r \in (0, r_x)$, where $r_x \doteq \left(\dist (x, \partial G)\right)^2$, there exists a nonnegative function
$g_{x,r}\in \C^2_c(\overline G)$ such that 
\begin{enumerate}
\item
$\mbox{supp}[g_{x,r}]\cap \overline G \subset B_{\sqrt{r}}(x)\subset G$;
\item $0 \leq g_{x,r}(y) \leq 1$ for all $y \in\R^J$ and
  $g_{x,r}(y)=1$ for each $y\in B_{\sqrt{r}/2}(x)$; 
\item the following bounds are satisfied: 
 \[\sup_{y\in \overline G}|g_{x,r}(y)|\leq A_x, \ \sup_{y\in
\overline G}|\nabla g_{x,r}(y)|\leq \frac{A_x}{r},\ \sup_{y\in
\overline G}\sum_{i,j=1}^J\left|\frac{\partial^2g_{x,r}(y)}{\partial
y_i\partial y_j}\right|<\frac{A_x}{r^2};\]
\end{enumerate}
\end{prop}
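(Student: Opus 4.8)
The plan is to obtain $g_{x,r}$ by rescaling a single fixed bump function. First I would fix, once and for all, a function $\psi\in\C^\infty_c(\R^J)$ with $0\le\psi\le1$, $\psi(z)=1$ for $|z|\le 1/2$, and $\supp[\psi]\subset \overline{B_{3/4}(0)}$; such a $\psi$ is standard (e.g.\ mollify the indicator of $B_{5/8}(0)$). For $x\in G$ and $r\in(0,r_x)$ with $r_x=\left(\dist(x,\partial G)\right)^2$, define
\[
g_{x,r}(y)\doteq\psi\!\left(\frac{y-x}{\sqrt r}\right),\qquad y\in\R^J,
\]
and take its restriction to $\overline G$. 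Since $g_{x,r}\in\C^\infty_c(\R^J)$ is defined on all of $\R^J$, its restriction lies in $\C^2_c(\overline G)$ in the sense of the paper's conventions.

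Next I would verify properties 1 and 2 directly. Because $r<r_x$ we have $\sqrt r<\dist(x,\partial G)$, so $B_{\sqrt r}(x)\subset G$; moreover $\supp[g_{x,r}]\subset\overline{B_{3\sqrt r/4}(x)}\subset B_{\sqrt r}(x)$, which gives property 1 (and, in particular, $g_{x,r}\equiv 0$ on a neighbourhood of $\partial G\supset\MV$, so $g_{x,r}$ is in fact an element of $\MH\cap\C^2_c(\overline G)$, although this is not needed here). Property 2 is immediate: $0\le\psi\le1$ gives $0\le g_{x,r}\le1$, and $|y-x|\le\sqrt r/2$ forces $|(y-x)/\sqrt r|\le1/2$, hence $g_{x,r}(y)=1$ on $B_{\sqrt r/2}(x)$.

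For property 3, the chain rule gives $\nabla g_{x,r}(y)=r^{-1/2}(\nabla\psi)\big((y-x)/\sqrt r\big)$ and $\partial^2 g_{x,r}/\partial y_i\partial y_j(y)=r^{-1}(\partial^2\psi/\partial z_i\partial z_j)\big((y-x)/\sqrt r\big)$. Writing $M_\psi\doteq\sum_{i,j=1}^J\left\|\partial^2\psi/\partial z_i\partial z_j\right\|_\infty$, this yields
\[
\sup_{y\in\overline G}|g_{x,r}(y)|\le 1,\qquad
\sup_{y\in\overline G}|\nabla g_{x,r}(y)|\le\frac{\|\nabla\psi\|_\infty}{\sqrt r},\qquad
\sup_{y\in\overline G}\sum_{i,j=1}^J\left|\frac{\partial^2 g_{x,r}(y)}{\partial y_i\partial y_j}\right|\le\frac{M_\psi}{r}.
\]
Finally I would absorb the (deliberately loose) powers of $r$ demanded in the statement using the a priori bound $r<r_x$: since $r^{-1/2}\le r_x^{1/2}\,r^{-1}$ and $r^{-1}\le r_x\,r^{-2}$, setting
\[
A_x\doteq\max\left\{1,\ r_x^{1/2}\|\nabla\psi\|_\infty,\ r_x\,M_\psi\right\}
\]
makes all three bounds in property 3 hold, with $A_x$ depending on $x$ only through $\dist(x,\partial G)$.

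There is no genuinely hard step: this is a textbook rescaled mollifier, and the only point worth flagging is that the $A_x/r$ and $A_x/r^2$ bounds asserted in the proposition are weaker than the natural $1/\sqrt r$ and $1/r$ scalings one reads off directly — the slack is harmless and is recovered from the inequality $r<r_x$. Retaining this explicit (if non-sharp) form of the bounds is what makes these interior test functions compatible, at the patching stage in the proof of Proposition \ref{prop-testfn}, with the boundary test functions produced in Propositions \ref{lem:test1} and \ref{prop:TF2}.
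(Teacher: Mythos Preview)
Your proof is correct and follows essentially the same approach as the paper. The paper defines $g_{x,r}(y)=\xi(|y-x|^2/r)$ for a one-dimensional cutoff $\xi\in\C^\infty(\R)$ with $\xi=1$ on $(-\infty,1/2]$ and $\xi=0$ on $(1,\infty)$, whereas you use a $J$-dimensional bump $\psi((y-x)/\sqrt r)$; both are rescaled mollifiers, and in both cases the natural $r^{-1/2}$ and $r^{-1}$ derivative bounds are converted to the stated $A_x/r$ and $A_x/r^2$ bounds via $r<r_x$, exactly as you observe.
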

\begin{proof}  Let  $\xi$ be a bounded $\C^\infty$ function
on $\R$ such that $\xi(z) = 1$ when $z \leq 1/2$, $\xi(z) = 0$ 
when $z>1$, and $\xi$ is strictly decreasing in the
interval $(1/2,1)$, and note that then $\|\xi^\prime\|_{\infty} <
\infty$ and $\|\xi^{\prime \prime}\|_{\infty} < \infty$.  
For each $x\in G$ and $0<r<r_x$,  
define $g_{x,r}(y)\doteq\xi(|y-x|^2/r)$ for $y\in \R^J$. 
The three properties of $g_{x,r}$ are then easily verified.  The first
inclusion in the first property 
 holds because $|x-y|^2/r > 1$ when $y \not \in B_{\sqrt{r}} (x)$ and 
 $\xi (z) = 0$  when $z > 1$, whereas the second inclusion holds
 because  $\sqrt{r} < \sqrt{r_x} =  \dist(x, \partial G)$.   The second property
is satisfied because $y \in B_{\sqrt{r}/2} (x)$ implies $(y-x)^2/r \leq 1/4$,
and $\xi(z) = 1$ for $z \leq 1/4$, and   the last property holds with 
$A_x \doteq  \max(1, 2\|\xi^\prime\|_{\infty} \dist(x, \partial G), (4(J^2-J)
\|\xi^{\prime \prime}\|_{\infty} + 2J\|\xi^\prime\|_{\infty}) \left(\dist (x, \partial G)\right)^2)$.  
\end{proof}
\begin{remark} \label{rem:gxr}
{\em 
The function $g_{x,r}$ in Proposition \ref{lem:TF3} is translation invariant in $G$ in the sense that $g_{x,r}(y)=g_{x+\delta,r}(y+\delta)$ for each $y\in B_{\sqrt{r}}(x)$ and $r<r_x\wedge r_{x+\delta}$.}
\end{remark}
\bigskip

We now paraphrase a result from \cite{Ram06} that will be used to construct local functions associated with points 
$x \in \partial G$.  

\begin{lemma}
\label{prop:TF}  
Let $\overline{{\cal C}}$  be a closed convex cone with vertex at the
origin and a boundary that is ${\cal C}^\infty$, except possibly at
the vertex.   Given any  closed, convex, compact subset  ${\cal  K}$ of the interior of 
$\overline{{\cal C}}$, constants $0 < \csto < \cstt < \infty$ and $\ve > 0$, there exists
a ${\cal C}^\infty$ function $\ell$ on the set 
\[  \Lambda \doteq \{ y \in \R^J: \csto < \dist (y, \overline{{\cal C}})
< \cstt \} 
\]
that satisfies the following properties: 
\begin{enumerate}
\item 
$\sup_{z \in \Lambda} \left( \left|\ell(z) - \dist (z, \overline{{\cal C}}) \right|
\vee \left|\nabla \ell(z) - 1 \right|\right)\leq  \ve$; 
\item 
for every $j, k \in \{1, \ldots, J\}$, $\sup_{z \in \Lambda} 
 \left| \frac{\partial^2 \ell(z)}{\partial    z_j \partial z_k}\right| < \frac{3}{\eta}+1$. 
\item 
there exists $\theta > 0$ such that 
\[  \lan \nabla \ell (z), p \ran \leq - \theta, \qquad \mbox{ for } p
\in {\cal K} \mbox{ and } z \in \Lambda.
\]
\end{enumerate}
On the other hand, if  $\overline{{\cal C}}$ is a half-space so that
its boundary is ${\cal
  C}^\infty$ everywhere, given any subset ${\cal K}$  of
$\overline{{\cal C}}$,   the 
function $\ell (x) \doteq \dist (x, \overline{{\cal C}})$, $x \in \Lambda$, is  a ${\cal C}^2$ function 
on $\Lambda$ that satisfies properties (1) and (2) above and 
property (3) with $\theta = 0$. 
\end{lemma}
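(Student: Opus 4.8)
The plan is to realize $\ell$ as a mollification of the convex distance function $d(\cdot)\doteq\dist(\cdot,\overline{{\cal C}})$ and to read off all three properties from elementary convex geometry. First I would record the basic facts about $d$: since $\overline{{\cal C}}$ is closed and convex, the metric projection $\pi$ onto $\overline{{\cal C}}$ is single--valued and nonexpansive on $\R^J$, so $d$ is convex, $1$--Lipschitz, and $\C^1$ on $\R^J\sm\overline{{\cal C}}$ with $\nabla d(y)=(y-\pi(y))/d(y)=:n(y)$, a unit vector. Because $\overline{{\cal C}}$ is a cone with vertex at the origin, $d$ is positively homogeneous of degree one, hence $\nabla d=n(\cdot)$ is homogeneous of degree zero; since $d(y)\le|y|$, the set $\{d\ge\eta/2\}$ is contained in $\{|y|\ge\eta/2\}$, and combining continuity of $\nabla d$ on the compact set $\{|y|=1\}\cap\{d\ge\eta/2\}$ with the $0$--homogeneity shows that $\nabla d$ is uniformly continuous on $\{d\ge\eta/2\}$, with some modulus $\omega(\cdot)$.

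Next I would establish a uniform bound on the second derivatives of $d$. For $y$ with $d(y)=t>0$ the ball $B_t(\pi(y))$ is contained in $\overline{{\cal C}}+tB=\{d\le t\}$ and is internally tangent to the level set $\{d=t\}$ at $y$; hence $\{d\le t\}$ contains a ball of radius $t$ at each boundary point, which forces the principal curvatures of $\{d=t\}$ to lie in $[0,1/t]$ (the lower bound by convexity). Since at points of twice--differentiability $\nabla^2 d\,\nabla d=0$ and the remaining eigenvalues of $\nabla^2 d$ equal those curvatures, convexity of $d$ ensures $\nabla d$ is Lipschitz on $\{d>\eta/2\}$ with $\|\nabla^2 d\|\le 2/\eta$ a.e. there (and in particular $d\in W^{2,\infty}_{\mathrm{loc}}$ of that set). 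Now fix a radial mollifier $\varphi\in\C_c^\infty(B_1(0))$, $\varphi\ge 0$, $\int\varphi=1$, set $\varphi_\rho(\cdot)=\rho^{-J}\varphi(\cdot/\rho)$ with $\rho\in(0,\eta/2)$, and let $\ell\doteq d*\varphi_\rho$, restricted to $\Lambda$. For $z\in\Lambda$ one has $B_\rho(z)\subset\{d>\eta/2\}$, so $\ell$ is $\C^\infty$ near $z$ with $\partial_{jk}\ell(z)=\int\partial_{jk}d(z-w)\varphi_\rho(w)\,dw$, giving $|\partial_{jk}\ell(z)|\le 2/\eta<3/\eta+1$, which is property (2). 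The $1$--Lipschitz bound gives $|\ell(z)-d(z)|\le\rho$, and uniform continuity of $\nabla d$ gives $|\nabla\ell(z)-\nabla d(z)|\le\omega(\rho)$, so $\big|\,|\nabla\ell(z)|-1\,\big|\le\omega(\rho)$; choosing $\rho\le\varepsilon$ with $\omega(\rho)\le\varepsilon$ yields property (1).

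For property (3) I would use that $n(z)$ is an outer normal to $\overline{{\cal C}}$ at $\pi(z)$: since $0$ and $2\pi(z)$ both lie in $\overline{{\cal C}}$, this forces $\lan n(z),\pi(z)\ran=0$ and therefore $\lan n(z),q\ran\le 0$ for every $q\in\overline{{\cal C}}$. As ${\cal K}$ is compact and contained in the interior of $\overline{{\cal C}}$, $\delta_0\doteq\dist({\cal K},\partial\overline{{\cal C}})>0$ and $p+\delta_0 n(z)\in\overline{{\cal C}}$ for $p\in{\cal K}$, whence $\lan\nabla d(z),p\ran=\lan n(z),p\ran\le-\delta_0$; averaging, $\lan\nabla\ell(z),p\ran\le-\delta_0+\omega(\rho)\max_{p\in{\cal K}}|p|\le-\delta_0/2=:-\theta$ once $\rho$ is small enough. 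For the half--space case, $\overline{{\cal C}}=\{y:\lan y,\nu\ran\le 0\}$ and $d(y)=\lan y,\nu\ran$ on $\Lambda$, which is affine ($\C^\infty$) with $\nabla\ell\equiv\nu$ of unit length and vanishing Hessian, so (1) and (2) are immediate and $\lan\nu,p\ran\le 0$ for $p\in{\cal K}\subset\overline{{\cal C}}$ gives (3) with $\theta=0$. The step I expect to be the main obstacle is the uniform Hessian bound on $d$, i.e.\ justifying via convexity and the rolling--ball estimate that $\nabla d$ is genuinely Lipschitz on $\{d>\eta/2\}$ — including across the interface where the nearest point in $\overline{{\cal C}}$ switches between the vertex and the smooth part of $\partial\overline{{\cal C}}$ — since this is exactly what makes a single mollification work uniformly on all of $\Lambda$ without invoking smoothness of $\partial\overline{{\cal C}}$.
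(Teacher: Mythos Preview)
Your proposal is correct and follows essentially the same approach as the paper. The paper does not give a self-contained argument but simply observes that $\ell$ is obtained as a mollification of the distance function to $\overline{{\cal C}}$ and refers to the proof of Lemma~6.2 of \cite{Ram06} for the details; your write-up supplies exactly those details, including the convex-geometric Hessian bound that you correctly flag as the only nontrivial step.
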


\noi 
As shown in \cite{Ram06}, a function $\ell$ with the  properties
stated above can be constructed as a suitable mollification of the
distance function to the cone $\overline{{\cal C}}$.  
Indeed, Lemma \ref{prop:TF} follows immediately from the proof of Lemma 6.2 of
\cite{Ram06},  with  $g_C$, $L_{C,\delta_C}$, $K_{C}^{\delta_C/3}$, 
$\tilde \eta_C$, $\tilde \lambda_C$ and $\tilde
 \varepsilon_C$ therein replaced by  $l(\cdot)$, $\overline{{\cal C}}$,  ${\cal
   K}$, $\csto$,  $\cstt$ and $\ve$, respectively. 

We now construct the second class of test functions associated with $x
\in \MV$.

\begin{prop} \label{lem:test1} Suppose Assumption 2' holds with unit vector $\nvec_x$ and 
  associated constants $r_x > 0$, $0< \cxone < \cxtwo < \infty$,
  $\alpha_x > 0$, $x \in \MV$.  
For each $x\in \MV$ there is a constant
$A_x  < \infty$ such that for every $r \in (0, r_x/\cxtwo)$, there exists a 
nonnegative function $g_{x,r}\in \C^2_c(\overline G)$ such  that the following three properties hold:
\begin{enumerate}
\item $\supp[g_{x,r}]\cap \overline G\subset \overline
  B_{\cxtwo 
    r}(x) \cap \overline G \subset B_{r_x} (x) \cap \overline{G}$;
\item $g_{x,r}(y)=1$ for each $y\in B_{\cxone r}(x) \cap \overline G$;
\item the following bounds are satisfied: 
\[\sup_{y\in \overline G}|g_{x,r}(y)|\leq A_x, \ \sup_{y\in
\overline G}|\nabla g_{x,r}(y)|\leq \frac{A_x}{r},\ \sup_{y\in
\overline G}\sum_{i,j=1}^J\left|\frac{\partial^2g_{x,r}(y)}{\partial
y_i\partial y_j}\right|<\frac{A_x}{r^2}.\]
\item $\lan d, \nabla g_{x,r}(y) \ran \leq 0$ for $d\in d(y)$ and
  $y\in \partial G$. 
\end{enumerate}
\end{prop}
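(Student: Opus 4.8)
The plan is to build $g_{x,r}$ as a smooth, nonincreasing function of the linear functional $h(y)\doteq\lan\nvec_x,y-x\ran$ (the same $h$ as in Lemma~\ref{lem:g0}), rescaled by $r$ and multiplied by a spatial cut-off confining it near $x$. Since the relevant local cone $\Theta_x$ here is a half-space, no mollified distance function (as in Lemma~\ref{prop:TF}) is needed: the affine $h$ already does the job. Concretely, fix $\chi\in\C^\infty(\R)$ with $0\le\chi\le1$, $\chi\equiv1$ on $(-\infty,\cxone]$, $\chi\equiv0$ on $[1,\infty)$ and $\chi$ nonincreasing, and, for each $r\in(0,r_x/\cxtwo)$, fix $\beta\in\C^\infty_c(\R^J)$ with $0\le\beta\le1$, $\beta\equiv1$ on $B_{\cxtwo r}(x)$ and $\beta\equiv0$ outside $B_{(\cxtwo r+r_x)/2}(x)$ (so $\supp[\beta]\subseteq B_{r_x}(x)$, using $\cxtwo r<r_x$). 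Set $g_{x,r}(y)\doteq\chi(h(y)/r)\,\beta(y)$; this is nonnegative, lies in $\C^\infty_c(\R^J)$, and so its restriction to $\overline G$ lies in $\C^2_c(\overline G)$.

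Next I would read off properties (1) and (2) from the sandwich inclusion (\ref{include}). For (2): if $y\in B_{\cxone r}(x)\cap\overline G$ then $h(y)\le|y-x|<\cxone r$, so $\chi(h(y)/r)=1$, and $\beta(y)=1$ because $\cxone r<\cxtwo r$; hence $g_{x,r}(y)=1$. For (1): if $g_{x,r}(y)\ne0$ for some $y\in\overline G$, then $\beta(y)\ne0$ forces $y\in B_{r_x}(x)$ and $\chi(h(y)/r)\ne0$ forces $h(y)<r$, so by the second inclusion in (\ref{include}) we get $y\in B_{\cxtwo r}(x)$; thus the support of $g_{x,r}$ relative to $\overline G$ sits in $\overline B_{\cxtwo r}(x)\cap\overline G\subseteq B_{r_x}(x)\cap\overline G$.

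The key observation for (3) and (4) is the contrapositive of the second inclusion in (\ref{include}): \emph{for $y\in\overline G\cap B_{r_x}(x)$ with $|y-x|\ge\cxtwo r$ one has $h(y)\ge r$}, and hence $\chi(h(y)/r)=\chi'(h(y)/r)=\chi''(h(y)/r)=0$ there (as $\chi$ and its derivatives vanish on $[1,\infty)$). Since $\nabla\beta$ and $\nabla^2\beta$ are supported in the closed shell $\{\cxtwo r\le|\cdot-x|\le(\cxtwo r+r_x)/2\}\subseteq B_{r_x}(x)$, this makes every term carrying a derivative of $\beta$ in the product-rule expansions of $\nabla g_{x,r}$ and $\nabla^2g_{x,r}$ vanish on $\overline G$; so $\nabla g_{x,r}(y)=r^{-1}\chi'(h(y)/r)\,\nvec_x$ and $\nabla^2g_{x,r}(y)=r^{-2}\chi''(h(y)/r)\,\nvec_x\nvec_x^T$ on $B_{\cxtwo r}(x)\cap\overline G$, and both vanish on $\overline G\setminus B_{\cxtwo r}(x)$. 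Property (3) then follows on $\overline G$ with $A_x\doteq\max(1,\|\chi'\|_\infty,J\|\chi''\|_\infty)$, finite and independent of $r$. For (4): take $y\in\partial G$ and $d\in d(y)$; if $y\notin B_{\cxtwo r}(x)$ then $\nabla g_{x,r}(y)=0$, and if $y\in B_{\cxtwo r}(x)\subseteq B_{r_x}(x)$, write $d=\sum_{i\in\MI(y)}s_i\gamma^i(y)$ with $s_i\ge0$ and use $\MI(y)\subseteq\MI(x)$ together with $\gamma^i(y)\in\Theta_x$ (second property of Assumption~2') to get $\lan d,\nvec_x\ran\ge0$, whence $\lan d,\nabla g_{x,r}(y)\ran=r^{-1}\chi'(h(y)/r)\lan d,\nvec_x\ran\le0$ because $\chi'\le0$.

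The hardest point is the one just exploited. The spatial cut-off $\beta$ is unavoidable — $\chi(h(\cdot)/r)$ is supported only in the half-space $\{h<r\}$, whose intersection with $\overline G$ need not be bounded — yet it must be inserted without reversing the sign of $\lan d,\nabla g_{x,r}\ran$ on $\partial G$ and without destroying the $r$-uniformity of the bounds in (3) (note $\|\nabla\beta\|_\infty$ is of order $(r_x-\cxtwo r)^{-1}$, which diverges as $r\uparrow r_x/\cxtwo$). The geometric sandwich (\ref{include}) is exactly what resolves this: it forces $\beta$ to carry out its whole transition inside the region where $\chi(h(\cdot)/r)$ and all of its derivatives already vanish, so that no $\beta$-derivative term ever enters the formulas for $\nabla g_{x,r}$ or $\nabla^2g_{x,r}$ on $\overline G$. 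A minor point, handled above, is that the support in the statement is to be read relative to $\overline G$.
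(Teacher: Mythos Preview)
Your proof is correct and is essentially a streamlined variant of the paper's own argument. Both constructions are, at bottom, a smooth nonincreasing bump applied to the affine functional $h(y)=\lan \nvec_x,y-x\ran$: the paper writes this as $\zeta\!\left(\tfrac{2}{\kappa_x}\,\dist\bigl(\tfrac{y-x-r\nvec_x}{r},\Theta_x\bigr)\right)$ and invokes Lemma~\ref{prop:TF} (trivially, since $\Theta_x$ is a half-space, so $\dist(z,\Theta_x)=(-\lan \nvec_x,z\ran)^+$), whereas you go straight to $\chi(h(y)/r)$. The more substantive difference is the localization to $B_{r_x}(x)$: the paper uses a piecewise definition and checks that the pieces glue smoothly, while you multiply by an explicit cut-off $\beta$ and then kill every $\beta$-derivative term on $\overline G$ via the contrapositive of the second inclusion in (\ref{include}). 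That observation --- that the geometric sandwich forces $\chi(h(\cdot)/r)$ and its derivatives to vanish on the shell where $\beta$ transitions --- is exactly the right way to make the cut-off harmless for both the sign condition (4) and the $r$-uniform bounds (3), and it makes the $\C^2_c$ regularity immediate. One tiny remark: with the paper's convention that $B_\rho$ denotes the \emph{closed} ball, the contrapositive of (\ref{include}) literally gives $h(y)\ge r$ only for $|y-x|>\cxtwo r$, not $\ge$; but since $\beta\equiv1$ on the closed ball and $\beta\in\C^1$, one has $\nabla\beta=0$ on $\{|y-x|=\cxtwo r\}$ as well, so the product terms still vanish there and your conclusion stands.
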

\begin{proof} Fix $x\in \MV$ and $r\in (0,r_x/c_x^2)$.  
 By  (\ref{include})  of Assumption 2',  we have 
\begin{eqnarray}\sup_{y\in \overline G\cap B_{r_x}(x)}
  \dist(y,x+r\nvec_x+\Theta_x) &= & \sup_{y\in \overline G \cap
    B_{\cxtwo r}(x)} \dist(y,x+r\nvec_x+\Theta_x)\nonumber \\ &\leq & \sup_{y\in
    \overline G \cap B_{\cxtwo r}(x)} |y-x-r\nvec_x|\leq
  (\cxtwo+1)r. \label{thetainclude}\end{eqnarray} 
We first argue by contradiction to establish the claim that the quantity 
 \[\kappa_x\doteq \inf_{r\in (0,r_x/\cxtwo)}\inf_{y\in \overline G:\
  |y-x|/r\leq \cxone }\dist((y-x)/r,\nvec_x+\Theta_x)\]
is strictly positive. Suppose, instead, that
$\kappa_x=0$. Then there exists a sequence $\{(y_m,r_m):\ m\in \N\}$
such that for each $m\in \N$, $r_m\in (0,r_x/\cxtwo)$, $y_m\in
\overline G$,  $|y_m-x|/r_m\leq \cxone $ and
$\dist((y_m-x)/r_m,\nvec_x+\Theta_x) \rightarrow 0$ as $m\rightarrow
\infty$. Since $(y_m-x)/r_m$ 
is bounded by $\cxone$ uniformly in $m$, by choosing a 
subsequence if necessary, we may assume without loss of generality
that there exists $y^*$ such that $|y^*|\leq \cxone $ and
$(y_m-x)/r_m\rightarrow y^*$ as $m\rightarrow \infty$.  This implies
that $\dist(y^*,\nvec_x+\Theta_x)=0$ or, in other words, that 
 $y^*=\nvec_x+z^*$ for some $z^*\in \Theta_x$. Thus, since $\nvec_x$ is a unit
 vector and $z^* \in \Theta_x$ implies $\lan \nvec_x, z^* \ran \geq 0$, 
 \[|y^*|^2=\lan\nvec_x+z^*,\nvec_x+z^*\ran = 1 +
2\left<\nvec_x,z^*\right> +|z^*|^2 \geq 1.\] But this contradicts the
inequalities $|y^*|\leq \cxone  <1$.   Thus, we have
shown that $\kappa_x>0$.  Since $\Theta_x$ is a half-plane that goes
through the origin, this
implies that 
\be  \label{kappar}\dist(y,x+r\nvec_x+\Theta_x) \geq \kappa_x r \mbox{ for each } y\in \overline G \cap B_{\cxone r}(x). \ee

Consider the set 
\[  \Lambda_x \doteq \left\{ z \in \R^J:  \frac{\kappa_x}{32} < \dist(z,
  \Theta_x) < \overline{c}_x + 3 \right\}, 
\]
and define $\ell_x (z) \doteq \dist (z, \Theta_x)$ for $z \in
\Lambda_x$.   Then, since $\Theta_x$ is a half-space and property 2 of Assumption 2' shows that 
$\cup_{y \in \overline G\cap B_{r_x}(x)} d(y) \subseteq \Theta_x$, it follows 
from  Lemma \ref{prop:TF} that $\ell_x$ satisfies properties (1)--(3) 
therein with $\Lambda = \Lambda_x$, $\overline{{\cal C}} = \Theta_x$, ${\cal K} = \cup_{y \in \overline G\cap B_{r_x} (x)} d(y)$, 
$\csto = \kappa_x/32$, $\cstt = \cxtwo+3$, $\ve = \kappa_x/64$ and
$\theta = 0$. 
Now, let $\zeta$ be a $\C^\infty$ function defined on $\R$ such that
$\zeta(s)= 0$  when $s<1/2$, $\zeta(s) = 1$ 
when $s>1$, and $\zeta$ is strictly increasing in the interval
$(1/2,1)$. Then, for each $y\in \R^J$, define 
\[ g_{x,r}(y)\doteq \left\{\begin{array}{ll}
    \zeta\left(\frac{2}{\kappa_x} \ell_x\left(\frac{y-x-r
          \nvec_x}{r}\right)\right) & \mbox{ if } \frac{y-x-r \nvec_x}{r}\in
    \Lambda_x \mbox{ and } y \in B_{r_x}(x), \\ 0 & \mbox{ otherwise.} \end{array}\right.\]  

The first property of $\ell_x$ from Lemma \ref{prop:TF} and the definition of $\zeta$ imply that
 \[ \begin{array}{rcl} \ds \supp[g_{x,r}]\cap \overline G 
   & \subseteq & \{y\in \overline G \cap B_{r_x}(x):\ l_x\left(\frac{y-x - r
       \nvec_x}{r}\right)> \frac{\kappa_x}{4}\} \\ 
& \subseteq &   \{y\in \overline G \cap B_{r_x} (x):\ \dist(y,x + r\nvec_x+\Theta_x)>
\frac{\kappa_x r}{8}\}. 
\end{array}
\]
However, by the property (\ref{include}) of $\cxtwo$ stated in Assumption 2'
and the fact that $r \cxtwo < r_x$, it follows that
\[    \{y\in \overline G \cap B_{r_x} (x):\ \dist(y,x + r\nvec_x+\Theta_x)>
\kappa_x r/8\} \subset \overline{G} \cap B_{\cxtwo r}(x). 
\]
The last two assertions and (\ref{thetainclude}) together imply that $\supp[g_{x,r}]\cap \overline G \subset \Lambda_x$. This and the fact that 
$\zeta \in {\cal C}^\infty
(\R)$ and $\ell_x \in {\cal C}^\infty (\Lambda_x)$ imply that $g_{x,r}$
is  ${\cal C}^\infty_c (\R^J)$. The last two assertions also imply that $g_{x,r}$ satisfies
the first property stated in the proposition. 
   Next, for $y \in \overline{G} \cap B_{\cxone r} (x)$, by
(\ref{kappar}) it follows that $\dist(\frac{y-x-r \nvec_x}{r}, \Theta_x)\geq
\kappa_x$, and by the first property  of $\ell_x$ from Lemma \ref{prop:TF}, we have 
$\ell_x(\frac{y-x-r \nvec_x}{r}) \geq  63\kappa_x/64$, 
which in turn implies
$\zeta\left(\frac{2}{\kappa_x}\ell_x\left(\frac{y-x-r
      \nvec_x}{r}\right)\right)=1$. Thus, $g_{x,r}(y)=1$ for all $y\in
\overline G \cap B_{\cxone r}(x)$, 
showing that the second property holds.  
The third property is easily verified using the form of $g_{x,r}$, 
the fact that  $g_{x,r}
\in {\cal C}_c^\infty (\overline{G})$.  
Finally,  the fourth property of $g_{x,r}$ holds because $\ell$ satisfies the third
property of Lemma \ref{prop:TF} with $\theta = 0$, and  $\zeta$ is
non-decreasing.  
\end{proof}
\bigskip

We now turn to  the construction of  local test functions associated
with $x \in \partial G\setminus \MV$. 
For this, we first introduce some geometric objects associated with
the directions of reflection,  similar to those introduced in 
 Section 6.1 of \cite{Ram06} in the context of polyhedral domains.  
For $x \in \partial G$,  let 
\be \label{Kx}K_x \doteq \left\{-\sum_{i\in \MI(x)}a_i\gamma^i(x):
  a_i\geq 0, i \in \MI(x), \sum_{i\in \MI(x)}a_i=1 \right\}. \ee
Note that $K_x$ is a convex, compact subset of $\R^J$.  
  Therefore, 
 there exist $\delta_x>0$ and a compact, convex
set $K_{x,\delta_x}$ such that $K_{x,\delta_x}$ has $\C^\infty$
boundary and satisfies \be \label{func1}K_x^{\delta_x/2}\subset
(K_{x,\delta_x})^\circ\subset K_{x,\delta_x} \subset
K_x^{\delta_x}, \ee 
where we recall that $K_x^\ve$ denotes the $\ve$-fattening of the set
$K_x$ for every $\ve > 0$.  
Now, if $x \in \partial G\setminus \MV\subset \MU$, then it is easy to see that 
$0 \not \in K_x$ and 
\be \label{sep0} \min_{i\in \MI(x)}\lan n^i(x),d\ran <0 \qquad
 \mbox{for every } d\in K_x.\ee
Therefore,  $\delta_x > 0$ can be chosen such that $0 \notin K_{x,\delta_x}$ and 
\be \label{est:test11}\min_{i\in \MI(x)}\lan
n^i(x),d\ran <0 \qquad \mbox{for every } d\in K_x^{\delta_x},\ee
We first establish an elementary result that will be used in the
construction.   

\begin{lemma}\label{lem:A1} For $x \in \partial G\setminus \MV$, 
there exist $R_x\in (0,1)$ and $\beta_x>0$ such that 
\be \label{sep2} \min_{i\in \MI(x)}\lan n^i(x),d\ran <-2\beta_x|d|
\qquad \mbox{for every } d\in \cup_{t\in [0, R_x]}tK_{x,\delta_x}\ee
and 
\be \label{sep1}
\left(x+\cup_{t\in [0, R_x]} tK_{x,\delta_x}\right) \cap \overline G = \{x\}. \ee
Moreover, if $n^i(\cdot)$ and $\gamma^i(\cdot)$ are constant vector fields for each $i\in \MI$,  then $R_x$ and $\delta_x$ depend on $x$ only through $\MI(x)$.
\end{lemma}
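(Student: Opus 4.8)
The plan is to establish Lemma \ref{lem:A1} by a compactness argument in two parts, first the separation property \eqref{sep2} and then the geometric exclusion property \eqref{sep1}, treating the constant-field case as an automatic byproduct of how the constants are chosen.

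First I would prove \eqref{sep2}. The set $K_{x,\delta_x}$ is a compact convex set that, by \eqref{est:test11}, satisfies $\min_{i\in\MI(x)}\lan n^i(x),d\ran<0$ for every $d\in K_{x}^{\delta_x}\supset K_{x,\delta_x}$; moreover $0\notin K_{x,\delta_x}$, so $|d|$ is bounded away from $0$ on $K_{x,\delta_x}$. Since $d\mapsto \min_{i\in\MI(x)}\lan n^i(x),d\ran/|d|$ is continuous on the compact set $K_{x,\delta_x}$ and strictly negative there, it attains a negative maximum, say $-4\beta_x<0$; set $\beta_x\doteq(1/4)\cdot(-\text{that max})>0$. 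Then $\min_i\lan n^i(x),d\ran\le -4\beta_x|d|$ for $d\in K_{x,\delta_x}$, and by positive homogeneity the same bound with $-4\beta_x$ (hence certainly with $-2\beta_x$) holds for every $d\in tK_{x,\delta_x}$, $t\ge 0$; taking the union over $t\in[0,R_x]$ for any $R_x>0$ gives \eqref{sep2}. (The slack between $4\beta_x$ and $2\beta_x$ is deliberate, leaving room in case one later needs a perturbation, but it is not essential.)

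Next I would prove \eqref{sep1}, and this is the step I expect to be the main obstacle, since it is the only place where the geometry of $\partial G$ near $x$ (not just the cone of directions) enters. The point is that $-\gamma^i(x)$ points strictly \emph{out} of $G_i$ at $x$ since $\lan n^i(x),\gamma^i(x)\ran=1>0$ after the normalization \eqref{normalize}; consequently every nonzero $d\in K_x$ points strictly out of $G=\bigcap_i G_i$ at $x$, in the sense that $\min_{i\in\MI(x)}\lan n^i(x),d\ran<0$. Because each $G_i$ has $\C^1$ boundary with $G_i=\{\func^i>0\}$ and $n^i(\cdot)=\nabla\func^i/|\nabla\func^i|$ is continuous, there is $\rho_x>0$ and $\epsilon_x>0$ so that for $y\in B_{\rho_x}(x)$ and each $i\in\MI(x)$ one has $|n^i(y)-n^i(x)|<\epsilon_x$; then for $d\in K_{x,\delta_x}$ and $y=x+td$ with $t>0$ small, a first-order Taylor expansion of $\func^i$ along $d$ gives $\func^i(x+td)=t\lan\nabla\func^i(x),d\ran+o(t)<0$ for the index $i$ achieving the minimum in \eqref{sep2} (using $\func^i(x)=0$ for $i\in\MI(x)$ and $\func^i(x)>0$ for $i\notin\MI(x)$, the latter handled by continuity for $t$ small). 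Hence $x+td\notin G_i\supset G$, so $x+td\notin\overline G$ for all sufficiently small $t>0$, uniformly in $d\in K_{x,\delta_x}$ by compactness of $K_{x,\delta_x}$ and the uniform strict-negativity bound \eqref{sep2}. Choosing $R_x\in(0,1)$ small enough (also so that $x+tK_{x,\delta_x}\subset B_{\rho_x}(x)$ for $t\le R_x$, which is possible since $K_{x,\delta_x}$ is bounded) then forces $(x+\cup_{t\in[0,R_x]}tK_{x,\delta_x})\cap\overline G=\{x\}$, the only common point being $t=0$. One technical care point: near $t=0$ the ``$o(t)$'' must be controlled uniformly in $d$, which follows from uniform continuity of each $\nabla\func^i$ on the compact set $B_{\rho_x}(x)$ together with boundedness of $K_{x,\delta_x}$.

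Finally, for the constant-field case: if every $n^i(\cdot)$ and $\gamma^i(\cdot)$ is constant, then $K_x$, and hence the choice of $\delta_x$ and $K_{x,\delta_x}$ in \eqref{func1}, depend on $x$ only through the index set $\MI(x)$; likewise $\beta_x$ (defined from $\min_i\lan n^i,d\ran/|d|$ on $K_{x,\delta_x}$) depends only on $\MI(x)$. For $R_x$ one observes that in the constant case $\func^i$ is affine, so the Taylor expansion above is exact: $\func^i(x+td)=\func^i(x)+t\lan n^i,d\ran|\nabla\func^i|$, and for $i\in\MI(x)$ this is $t\lan n^i,d\ran|\nabla\func^i|<0$ for all $t>0$ whenever $d\in K_{x,\delta_x}$, with no smallness restriction from those faces; the only constraint on $R_x$ comes from keeping $\func^j(x+td)>0$ for $j\notin\MI(x)$, and since $\func^j(x)>0$ and $\func^j$ is affine with slope bounded over the finitely many possibilities, a single $R_x$ works for all $x$ with the same $\MI(x)$. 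This gives the asserted dependence of $R_x$ and $\delta_x$ on $x$ only through $\MI(x)$, completing the proof. \endproof
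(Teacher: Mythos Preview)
Your argument is correct and essentially matches the paper's: compactness of $K_{x,\delta_x}$ (with $0\notin K_{x,\delta_x}$) yields \eqref{sep2}, and a first-order expansion of $\partial G_i$ at $x$ yields \eqref{sep1}. The paper packages the latter as a direction-separation inequality (its \eqref{sep4}, comparing $\min_i\lan n^i(x),(y-x)/|y-x|\ran$ for $y\in\overline G$ against the same quantity on $K_{x,\delta_x}$) and then argues by contradiction, whereas you work directly with the defining functions $\func^i$; these are the same idea.

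One small point to clean up in your constant-field paragraph: there is \emph{no} constraint on $R_x$ coming from indices $j\notin\MI(x)$. Exhibiting a single $i\in\MI(x)$ with $\func^i(x+td)<0$ already forces $x+td\notin\overline{G_i}\supset\overline G$, so the values $\func^j(x+td)$ for $j\notin\MI(x)$ are irrelevant. In the affine case your exact Taylor expansion therefore shows that \emph{any} $R_x\in(0,1)$ works, which gives the $\MI(x)$-only dependence immediately (and avoids the awkwardness that a constraint involving $\func^j(x)$ for $j\notin\MI(x)$ would genuinely depend on $x$, not just on $\MI(x)$).
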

\begin{proof} We first use an argument by contradiction to prove that 
\be \label{sep3}\sup_{d\in K_{x,\delta_x}} \min_{i\in \MI(x)}\lan n^i(x),d/|d|\ran  < 0. \ee
Suppose (\ref{sep3}) does not hold. Then there exists a sequence
$\{d_n,\ n\geq 1\}\subset K_{x,\delta_x}$ such that $\min_{i\in
  \MI(x)}\lan n^i(x),d_n/|d_n|\ran \geq 0$ for all $n\geq 1$.  
Since $K_{x,\delta_x}$ is compact, by choosing a subsequence if
necessary, 
we may assume without loss of generality that $d_n\rightarrow d \in
K_{x,\delta_x}$ as $n\rightarrow \infty$. Thus, $\min_{i\in
  \MI(x)}\lan n^i(x),d/|d|\ran$   $\geq 0$, which contradicts
(\ref{est:test11}).
Thus, (\ref{sep3}) holds and, in turn, this implies that    there exists $\beta_x>0$
such that  (\ref{sep2})  holds. 

On the other hand, for each $i \in
\MI(x)$, 
since $\partial G_i$ is $\C^1$ near $x$, it follows that 
\[\lim_{\delta \rightarrow 0}\inf_{y\in \overline G:\ |y-x|\leq
  \delta} \min_{i\in \MI(x)}\lan n^i(x), (y-x)/|y-x| \ran \geq 0. \]  
Together with (\ref{sep3}) this shows that there exists $R_x\in (0,1)$ such that 
\be \label{sep4}
\inf_{y\in \overline G:\ |y-x|\leq R_x(\sum_{i\in \MI(x)}|\gamma^i(x)|+\delta_x)} \min_{i\in \MI(x)}\lan
n^i(x), \frac{y-x}{|y-x|} \ran > \sup_{d\in K_{x,\delta_x}} \min_{i\in
  \MI(x)}\lan n^i(x),\frac{d}{|d|}s\ran.\ee
 We now use this to prove (\ref{sep1}) by contradiction.  Suppose that
 (\ref{sep1}) does not hold. Then there exists $d\in \cup_{t\in [0,
   R_x]}tK_{x,\delta_x}$ such that $d\neq 0$ and $x+d\in \overline
 G$. Since $d\in \cup_{t\in [0, R_x]}tK_{x,\delta_x}$, there is
 $t^*\leq R_x$ and $d^*\in K_{x,\delta_x}$ such that $d=t^*d^*$ and
 $|d|\leq R_x(\sum_{i\in \MI(x)}|\gamma^i(x)|+\delta_x)$.  
Then (\ref{sep4}) implies that 
\[ \min_{i\in \MI(x)}\lan n^i(x), d^*/|d^*| \ran= \min_{i\in
  \MI(x)}\lan n^i(x), d/|d| \ran > \sup_{d\in K_{x,\delta_x}}
\min_{i\in \MI(x)}\lan n^i(x),d/|d|\ran, \] which contradicts the fact
that $d^* \in K_{x,\delta_x}$. 

The last statement in the lemma follows directly from the constancy of $n^i(\cdot)$, $i\in \MI$, and the fact that $K_{x,\delta_x}$ and $\delta_x$ can be chosen to depend on $x$ only through $\MI(x)$ in this case. \end{proof}
\bigskip

For each $i\in \MI(x)$,  since $\partial G_i$ is $\C^1$ near
$x\in \partial G$,  the hyperplane $\{y\in \R^J:\ \lan n^i(x),y-x\ran = 0\}$ is  the tangent plane to $\partial G_i$ at $x$ for each $i\in \MI(x)$. Let
\be \label{Sx} S_x \doteq \cap_{i\in \MI(x)}\{y\in \R^J:\ \lan n^i(x),y-x\ran
\geq 0\}. \ee Then  $\overline G$ can be locally approximated near $x$
by the polyhedral cone $S_x$ in the sense that for each $N>0$,
\be \label{approxG}\{x+(y-x)/r \in \R^J:\ y\in \overline G,\ |y-x|\leq
N r\} \rightarrow S_x \cap B_N(x)\ \mbox{ as } r\rightarrow 0,\ee where the convergence is under the Hausdorff distance.
 In view of (\ref{sep1}), it follows that there exist $0<r_x<\dist(x,
 \MV \cup 
 \cup_{i\notin \MI(x)}(\partial G\cap \partial G_i))$ and
 $\lambda_x\in (0,1)$ such that for each $r\in (0,r_x)$,
 \be \label{include1}\left\{y\in \R^J:\ \dist(y,x+\cup_{t\leq
     R_x}tK_{x,\delta_x})\leq 3\lambda_x r\right\}\cap \partial G
 \subset B_r(x)\cap \partial G\ee and 
\be \label{include2}\left\{y\in \R^J:\ \dist(y,x+\cup_{t\leq R_x}tK_{x,\delta_x})\leq 3\lambda_x r\right\}\cap \overline G \cap \partial B_r(x)=\emptyset.\ee
Let $L_{x,\delta_x}$ be a truncated (half) cone with vertex at the
origin defined by  
\be \label{Lx} L_{x,\delta_x}\doteq \cup_{t\leq R_x/2}tK_{x,\delta_x}.\ee
Then (\ref{sep1}) implies 
\[(x+L_{x,\delta_x})\cap\overline G = \{x\}.\]

\begin{prop}\label{prop:TF2} For each $x\in \partial G\setminus \MV$, there exist
  $0<r_x<\dist(x, \MV \cup \cup_{i\notin \MI(x)}(\partial G\cap \partial
  G_i))$,  $A_x\in (0,\infty)$,  and a family of 
nonnegative functions $\{g_{x,r}\in \C^2_c(\overline G):\ r\in
(0,r_x]\}$ that satisfy the
following additional properties:
\begin{enumerate}
\item
$\mbox{supp}[g_{x,r}]\cap \overline G \subset B_{r}(x)\cap \overline
G$; 
\item $g_{x,r}(y)=1$ for each $y\in B_{\delta_{x,r}}(x) \cap \overline
  G$ for some constant $\delta_{x,r}>0$; 
\item
for $r\in (0,r_x)$, \[\sup_{y\in \overline G}|g_{x,r}(y)|\leq A_x, \ \sup_{y\in
\overline G}|\nabla g_{x,r}(y)|\leq \frac{A_x}{r},\ \sup_{y\in
\overline G}\sum_{i,j=1}^J\left|\frac{\partial^2g_{x,r}(y)}{\partial
y_i\partial y_j}\right|<\frac{A_x}{r^2};\]
\item $\lan d, \nabla g_{x,r}(y) \ran \leq 0$  for all $d\in d(y)$ and
  $ y\in \partial G$. 
\end{enumerate}
\end{prop}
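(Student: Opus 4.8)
The plan is to construct $g_{x,r}$ by the scheme used in the proof of Proposition~\ref{lem:test1} and of Lemma~6.2 of \cite{Ram06}: $g_{x,r}$ will be a rescaled, mollified, cut-off version of the distance function to a truncated copy, placed at $x$, of the closed convex cone $\overline{\MC}_x \doteq \bigcup_{t \ge 0} t K_{x,\delta_x}$; the rescaling by $r$ is exactly what produces the $r^{-1}$ and $r^{-2}$ bounds on the first and second derivatives in property~3, with a constant $A_x$ uniform in $r \in (0,r_x]$. Since $K_{x,\delta_x}$ is compact and convex with $\C^\infty$ boundary and $0 \notin K_{x,\delta_x}$, the set $\overline{\MC}_x$ is a closed convex cone with vertex at the origin and $\C^\infty$ boundary away from the vertex, so Lemma~\ref{prop:TF} applies to it. Because $-\gamma^i(x) \in K_x \subset (K_{x,\delta_x})^\circ$ by (\ref{func1}), each $-\gamma^i(x)$ lies in $\mathrm{int}(\overline{\MC}_x)$; by continuity of the $\gamma^i(\cdot)$ and the inclusion $\MI(y)\subseteq\MI(x)$ for $y$ near $x$, after shrinking $r_x$ we may assume $d(y) \subseteq -\overline{\MC}_x$ for all $y \in \overline G \cap B_{r_x}(x)$ and that the closed convex hull $\MK_x$ of $\{\,-d/|d| : 0 \ne d \in d(y),\ y \in \overline G \cap B_{r_x}(x)\,\}$ is a compact convex subset of $\mathrm{int}(\overline{\MC}_x)$.

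Next I would apply Lemma~\ref{prop:TF} with $\overline{\MC} = \overline{\MC}_x$, $\MK = \MK_x$ and suitably small constants $0 < \csto < \cstt < \infty$ and $\ve > 0$ (chosen in terms of $R_x$, $\delta_x$, $\lambda_x$, $\beta_x$) to obtain a $\C^\infty$ function $\ell_x$ on $\Lambda_x \doteq \{\,\csto < \dist(\cdot,\overline{\MC}_x) < \cstt\,\}$ that approximates $\dist(\cdot,\overline{\MC}_x)$ and its gradient to within $\ve$, has uniformly bounded second derivatives, and satisfies $\langle \nabla\ell_x(z), p\rangle \le -\theta < 0$ for all $p \in \MK_x$ and $z \in \Lambda_x$. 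Fixing a cutoff $\zeta \in \C^\infty(\R)$ equal to $1$ near $0$, vanishing past a threshold, and non-increasing in between, I would set $g_{x,r}(y) \doteq \zeta\big(c_x^{-1}\,\ell_x((y-x)/r)\big)$ when $(y-x)/r \in \Lambda_x$ and $y \in B_{r_x}(x)$, $g_{x,r}(y) \doteq 1$ when $y \in B_{r_x}(x)$ and $\dist((y-x)/r,\overline{\MC}_x) \le \csto$, and $g_{x,r}(y) \doteq 0$ otherwise, where $c_x > 0$ and the thresholds of $\zeta$ are tuned so that the three formulas agree on overlaps and $g_{x,r} \in \C^\infty_c(\R^J)$. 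Where boundedness of the support is at stake I would replace $\overline{\MC}_x$ by the truncated cone $L_{x,\delta_x}$, using (\ref{sep1}) and the fact that, for $y$ near $x$, the nearest point on $\overline{\MC}_x$ already falls in the truncated part, so the two distances coincide there.

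The verification of properties~2--4 is then routine. For property~4: since $g_{x,r} \ge 0$, one has $\nabla g_{x,r}(y) = 0$ at any $y$ with $g_{x,r}(y) = 0$ and at any $y$ where $g_{x,r} \equiv 1$ locally; at the remaining $y \in \partial G \cap B_{r_x}(x)$, where $(y-x)/r \in \Lambda_x$, one has $\nabla g_{x,r}(y) = \zeta'(\cdot)\,c_x^{-1} r^{-1}\,\nabla\ell_x((y-x)/r)$ with $\zeta' \le 0$, and for $0 \ne d \in d(y) \subseteq -\overline{\MC}_x$ the unit vector $-d/|d|$ lies in $\MK_x$, so $\langle \nabla\ell_x, -d/|d|\rangle \le -\theta$, i.e.\ $\langle d, \nabla\ell_x\rangle \ge \theta|d| > 0$; hence $\langle d, \nabla g_{x,r}(y)\rangle \le 0$ (and trivially $=0$ for $d=0$). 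Property~3 is immediate from the rescaling together with the bounds on $\ell_x$, $\nabla\ell_x$, $\nabla^2\ell_x$ from Lemma~\ref{prop:TF} and on $\zeta,\zeta',\zeta''$, all constants depending on $x$ only through the geometry of $\overline{\MC}_x$. Property~2 holds because $(y-x)/r$ is close to the vertex of $\overline{\MC}_x$ when $y$ is close to $x$, so $\dist((y-x)/r,\overline{\MC}_x) \le \csto$ there and $g_{x,r} \equiv 1$ on $B_{\delta_{x,r}}(x) \cap \overline G$ for some $\delta_{x,r}$ of order $r$.

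The main obstacle is property~1, the localization $\supp[g_{x,r}] \cap \overline G \subseteq B_r(x)$. One shows that, within $\overline G$, $g_{x,r}$ is supported in the $3\lambda_x r$-neighbourhood of $x + \bigcup_{t \le R_x} t K_{x,\delta_x}$, and then invokes (\ref{include1}) and (\ref{include2})--which were tuned precisely for this purpose--to conclude that this neighbourhood, intersected with $\overline G$, lies inside $B_r(x)$; the scaling limit (\ref{approxG}) and the separation property (\ref{sep1}) are what make the underlying geometric estimate uniform in $r$. Finally, in the polyhedral case with piecewise constant reflection, every ingredient of the construction--$K_x$, $K_{x,\delta_x}$, $\overline{\MC}_x$, $R_x$, $\delta_x$, $\lambda_x$, $\MK_x$, $\ell_x$, $\zeta$--depends on $x$ only through $\MI(x)$, by the last assertion of Lemma~\ref{lem:A1} and the translation invariance of the construction (cf.\ Remark~\ref{rem:gxr}), so that $r_x$ and $A_x$ can be taken uniform over $x \in \partial G \sm N_\delta\big(\MV \cup \bigcup_{i \notin \MI(x)}(\partial G \cap \partial G_i)\big)$; this is exactly what will be needed to deduce the polyhedral part of Proposition~\ref{prop-testfn}.
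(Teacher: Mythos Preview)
Your approach is essentially the paper's, with the same ingredients: the cone $\overline{\MC}_x=\bigcup_{t\ge 0}tK_{x,\delta_x}$, Lemma~\ref{prop:TF} applied to it to produce $\ell_x$, a monotone cutoff $\zeta$, the truncated cone $L_{x,\delta_x}$, and the geometric separation facts (\ref{sep1}), (\ref{sep2}), (\ref{include1}), (\ref{include2}). Your choice of $\MK_x$ differs cosmetically from the paper's $K_x^{\delta_x/3}$ but both are compact convex subsets of $\mathrm{int}(\overline{\MC}_x)$ and serve the same purpose.

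The one substantive device you omit is the \emph{shift}. The paper does not center the rescaled cone at $x$ but at $x-\lambda_x(R_x/2)\,r\,q_x$, where $q_x\in K_x$ is a unit vector with $-q_x$ pointing into $G$; it works with $k_{x,r}(y)=\ell_x\bigl((y-x)/r+\lambda_x(R_x/2)q_x\bigr)$ and $M(x,r)=x-\lambda_x(R_x/2)rq_x+rL_{x,\delta_x}$. Without this shift your definition runs into trouble at the truncation step: the set $\{y\in B_{r_x}(x):\dist((y-x)/r,\overline{\MC}_x)\le\eta\}$ on which you declare $g_{x,r}=1$ need not be contained in $B_r(x)\cap\overline G$, because $r\,\overline{\MC}_x=\overline{\MC}_x$ is unbounded while (\ref{include1})--(\ref{include2}) speak only of the \emph{truncated} set $\bigcup_{t\le R_x}tK_{x,\delta_x}$. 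Your proposed fix, ``replace $\overline{\MC}_x$ by $L_{x,\delta_x}$,'' cannot be carried out wholesale since Lemma~\ref{prop:TF} requires a cone with smooth lateral boundary and does not cover the flat base of $L_{x,\delta_x}$. The shift is precisely what reconciles the two roles: it pushes the vertex of the rescaled cone outside $\overline G$, so that (i)~$x$ becomes an interior point of $M(x,r)$, yielding property~2 at once, and (ii)~for $y$ in the shell $O(x,r)\subset\overline G$ where $\nabla g_{x,r}\ne0$, the projection of $(y-x)/r+\lambda_x(R_x/2)q_x$ onto $L_{x,\delta_x}$ lands on the lateral boundary and hence coincides with the projection onto the full cone $\overline{\MC}_x$ (see (\ref{star}) and the sentence following it). With this in hand, $\ell_x$---built from the full cone via Lemma~\ref{prop:TF}---can be used on $O(x,r)$ while the support is controlled by $M^{2\lambda_xr}(x,r)$ through (\ref{include1})--(\ref{include2}), giving (\ref{sub1}) and hence property~1. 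Once the shift is inserted, the rest of your verification (properties~3 and~4, and the polyhedral uniformity in the final paragraph) goes through as you outlined.
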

\begin{proof} Fix $x\in \partial G\setminus \MV$. Let $q_x$ be a unit vector in the set $K_x$
defined in (\ref{Kx}) such that $-q_x$ points into $G$ from $x$, and
for each $r\in (0,1)$, define \be \label{Mxr} M(x,r)\doteq x-\lambda_x
\frac{R_x}{2}rq_x+rL_{x,\delta_x} \ee and  \[ M^{2\lambda_x
  r}(x,r)\doteq \{y\in \R^J:\ \dist(y,M(x,r))\leq 2\lambda_x r\}.\]
Recalling $R_x < 1$, it is easy to see that for each $y\in
M^{2\lambda_x r}(x,r)$, \[\dist(y,x+\cup_{t\leq R_x}tK_{x,\delta_x})
\leq 2\lambda_x r + \left|\lambda_x \frac{R_x}{2}rq_x\right|\leq 3 \lambda_x r.\]
Thus, \begin{eqnarray*}M^{2\lambda_x r}(x,r)\subseteq \{y\in \R^J:\
  \dist(y,x+\cup_{t\leq R_x}tK_{x,\delta_x})\leq 3\lambda_x
  r\}\end{eqnarray*} 
and hence, by (\ref{include1})--(\ref{include2}) we have 
\[ M^{2\lambda_x r}(x,r) \cap \overline G \cap \partial
B_r(x)=\emptyset,\] and \be \label{sub1} M^{2\lambda_x r}(x,r)\cap
\overline G \subset B_r(x) \cap \overline G. \ee 

It follows from Lemma \ref{prop:TF} with $\overline{{\cal C}} = \cup_{t \geq 0} t
K_{x,\delta_x}$,   ${\cal K} = K_x^{\delta_x/3}$, $\lambda =
2\lambda_x$,  $\eta = \eta_x \in (0,\lambda_x)$, $\Lambda  =
\Lambda_x\doteq \{y\in \R^J:\ \eta_x<\dist\left(y, \cup_{t\geq 0}tK_{x,\delta_x}\right)\leq 2\lambda_x\}$  and
$\varepsilon_x=\lambda_x/12\wedge \eta_x/2$ that  there exists a
function $\ell_x:\ \Lambda_x \rightarrow \R$, that satisfies all the
properties stated in Lemma \ref{prop:TF}. 
Let \be \label{Oxr} O(x,r)\doteq \overline G \cap (M^{2\lambda_x r}(x,r) \setminus M^{\eta_x r}(x,r))\subset B_r(x) \cap \overline G.\ee
Note that by the local approximation of $\overline G$ by $S_x$ at $x$
in (\ref{approxG}),  by possibly making $r_x$ and $\lambda_x$ smaller,
we may assume that for each $r\in (0,r_x)$ and $y\in O(x,r)$, that is,
$y\in \overline G$ such that \be
\label{star}
\eta_x<\dist\left(\frac{y-x}{r}+\lambda_x
  \frac{R_x}{2} q_x, L_{x,\delta_x}\right) \leq 2\lambda_x,
\ee the projection of
$(y-x)/r+\lambda_x (R_x/2)q_x$ to $L_{x,\delta_x}$ 
coincides with the projection of $(y-x)/r+\lambda_x (R_x/2)q_x$ onto
$\cup_{t\geq 0}tK_{x,\delta_x}$ since $L_{x,\delta_x}$ is the portion
of $\cup_{t\geq 0}tK_{x,\delta_x}$ truncated near the vertex. Let
$k_{x,r}$ be the function on $O(x,r)$ given by 
\be  \label{kxr} k_{x,r}(y)\doteq \ell_x\left(\frac{y-x}{r}+\lambda_x
  \frac{R_x}{2} q_x\right), \qquad y \in \R^J.\ee  Then the properties of
  $\ell_x$ stated in Lemma \ref{prop:TF} and (\ref{Oxr}) imply that
  $k_{x,r} \in \C^\infty (O(x,r))$ and $k_{x,r}$ satisfies the following
  additional 
  properties: 
\begin{enumerate}
\item there exists $\theta_x>0$ such that
\[  \lan \nabla  k_{x,r} \left( y\right), p \ran \leq -\theta_x/r  \qquad \mbox{ for $p\in K_x^{\delta_x/3}$ and $y\in O(x,r)$;}\]
\item for every $i,j\in \{1,\cdots,J\}$, 
\[  \sup_{y\in O(x,r)}\left|\frac{\partial^2 k_{x,r} \left(y
    \right)}{\partial y_i y_j} \right| <\left(\frac{3}{\eta_x}+1\right)\frac{1}{r^2};\]
\item $\sup_{y\in O(x,r)}(|k_{x,r}(y)-\dist((y-x)/r+\lambda_x (R_x/2)q_x,L_{x,\delta_x})|\vee (r|\nabla k_{x,r}(y)|-1))\leq \lambda_x/12$.
\end{enumerate}
From the first property of $k_{x,r}$ stated above, it follows that 
 \[\lan r\nabla k_{x,r}(y),\gamma^i(x)\ran \geq \theta_x\mbox{ for $i\in \MI(x)$ and $y\in O(x,r)$}.\] 
Since $\gamma^i(\cdot)$ is  continuous for each $i\in \MI$, by possibly making $r_x$ yet smaller and using the third property of $k_{x,r}$, we may assume that
for each $r\in (0,r_x)$,
\be \label{func2}\lan r\nabla k_{x,r} (y),\gamma^i(y)\ran \geq \theta_x/2\mbox{ for $i\in \MI(y)$ and $y\in O(x,r)$}.\ee

Now, choose $\zeta_x\in \C^\infty(\R)$ to be a decreasing function such
that \be
\label{def-hx}
\zeta_x(s)=\left\{\begin{array}{ll} 1 & \mbox{ if } s\in
    (-\infty,5\lambda_x/4],\\ \mbox{ strictly decreasing } & \mbox{ if
    } s\in (5\lambda_x/4, 23\lambda_x/12], \\ 0 & \mbox{ if } s\in
    (23\lambda_x/12,\infty), \end{array}\right.
\ee
and define $g_{x,r}:\ \R^J\rightarrow \R_+$ to be
\be 
\label{gxr2}
g_{x,r}\doteq \left\{\begin{array}{ll} \zeta_x(k_{x,r}(y)) & \mbox{ if } y\in O(x,r), \\
1 & \mbox{ if } y\in \overline G \cap M^{\eta_x r}(x,r), \\
0 & \mbox{ otherwise.} \end{array} \right.\ee
It follows from the definition of $g_{x,r}$, the properties of $\zeta_x$
and $k_{x,r}$, the definitions of $M(x,r)$ and $O(x,r)$ given in
(\ref{Mxr}) and  (\ref{Oxr}), respectively, and property (\ref{sub1}) that 
\be 
\label{support}
\begin{array}{l} \ds supp[g_{x,r}]\cap \overline G \\ \qquad \ds
  \subset \left\{y\in \overline G:\ k_{x,r}(y)\leq
    23\frac{\lambda_x}{12}\right\} \\ 
\qquad \ds \subset   \{y\in \overline G:\
\dist\left(\frac{y-x}{r}+\frac{\lambda_x
    R_x}{2}q_x,L_{x,\delta_x}\right)\leq
\frac{23\lambda_x}{12}+\frac{\lambda_x}{12}\} \\ 
\qquad \ds =  \left\{y\in \overline G:\ \dist\left(y-x+\frac{\lambda_x
      R_x}{2} rq_x, rL_{x,\delta_x}\right)\leq 2\lambda_xr\right\} \\ \qquad \ds = M^{2\lambda_xr}(x,r)\cap \overline G \\ \qquad \ds \subset B_r(x)\cap \overline G. \end{array} \ee
This establishes property (1) of the lemma.  In addition, 
\be \begin{array}{l} \ds \{y\in \overline G:\ k_{x,r}(y)\geq
  5\lambda_x/4\} \\ \quad \ds \subset   \left\{y\in \overline G:\
    \dist\left(\frac{y-x}{r}+\frac{\lambda_x
        R_x}{2}q_x,L_{x,\delta_x}\right)\geq
    \frac{5\lambda_x}{4}-\frac{\lambda_x}{12}\right\} \\ \quad \ds
  \subset  \left\{y\in \overline G:\ \dist((y-x)+\frac{\lambda_x
      R_x}{2}rq_x, rL_{x,\delta_x})\geq \lambda_xr\right\}. \end{array} \ee 
Thus, the set on which $g_{x,r}$ is not constant is a strict subset of
$O(x,r)$. Combining this with (\ref{support}) and the properties  $\zeta_x\in \C^\infty(\R)$
and $k_{x,r}\in \C^\infty(O(x,r))$,  it follows that $g_{x,r}\in
\C^\infty(\overline G)$.

Since $x$ is an interior point of $M(x,r)$, there exists
$\delta_{x,r} > 0$ such that $B_{\delta_{x,r}}(x)\subset M(x,r)$. 
For $y \in B_{\delta_{x,r}}(x)$,  property 3 of $k_{x,r}$ implies 
$k_{x,r}(y) \leq \lambda_x /12$, which when combined with 
(\ref{gxr2}) and  (\ref{def-hx}), implies $g_{x,r} (y) = 1$. 
Thus, $g_{x,r}$ satisfies property 2 of the lemma.  On the other
hand, $g_{x,r}$ satisfies property 3 because of  (\ref{gxr2}) and 
properties 2 and 3 of $k_{x,r}$.   Finally, for each $y\in O(x,r)$, a simple
calculation shows that\[\nabla g_{x,r}(y) =
\zeta_x'(k_{x,r}(y))\nabla k_{x,r} (y). 
\]
Together with (\ref{func2}) and the fact that $\zeta_x$ is non-increasing,
this implies that \[\lan \nabla g_{x,r}(y), \gamma^i(y) \ran \leq
0\mbox{ for $i\in \MI(y)$ and $y\in O(x,r)$.}\] Thus, $g_{x,r}$ also
satisfies the fourth property stated in the lemma. 
\end{proof}

\begin{corollary} \label{cor:A1}
If $n^i(\cdot)$ and $\gamma^i(\cdot)$ are constant vector fields for
each $i\in \MI$,  then the constants $A_x\in (0,\infty)$ and $r_x>0$ in Proposition \ref{prop:TF2} can be chosen to satisfy $A_x=A_{x'}$ and $r_x=r_{x'}$ if $\MI(x)=\MI(x')$, $r_x<\dist(x,\MV\cup\cup_{i\notin \MI(x)}(\partial G\cap \partial G_i))$ and $r_{x'}<\dist(x',\MV\cup\cup_{i\notin \MI(x')}(\partial G\cap \partial G_i))$. Moreover,  the family
of nonnegative functions $\{g_{x,r}\in \C^2_c(\overline G):\ r\in
(0,r_x]\}$ in Proposition \ref{prop:TF2} is translation invariant in the sense that $g_{x,r}(y)=g_{x+\delta,r}(y+\delta)$ if $\MI(x)=\MI(x')$ and $r_x=r_{x'}$.
\end{corollary}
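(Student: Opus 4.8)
The plan is to revisit the construction of $g_{x,r}$ in the proof of Proposition \ref{prop:TF2} and check that, when every $n^i(\cdot)$ and $\gamma^i(\cdot)$ is constant, each geometric object and each auxiliary function occurring there depends on the point $x\in\partial G\sm\MV$ only through the active index set $\MI(x)$, with the point $x$ itself entering solely through a rigid translation. The one role of the bound on $r_x$ is to guarantee that, inside $B_{r_x}(x)$, the domain $\overline G$ coincides with the translated polyhedral cone $x+S_0^{\MI(x)}$, where $S_0^{F}\doteq\cap_{i\in F}\{v\in\R^J:\ \lan n^i,v\ran\geq 0\}$; for $r_x$ small enough this is exactly the content of the local approximation (\ref{approxG}), which in the polyhedral case is an exact identity. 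Translation invariance and equality of the constants then follow at once.

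First I would record the $\MI$-dependence of the ingredients. Since the $\gamma^i$ are constant, the cone $K_x$ of (\ref{Kx}) is the set $K_{\MI(x)}\doteq\{-\sum_{i\in\MI(x)}a_i\gamma^i:\ a_i\geq 0,\ \sum_i a_i=1\}$; by the last assertion of Lemma \ref{lem:A1} the smoothed cone $K_{x,\delta_x}$, the constants $\delta_x$ and $R_x$, and hence the truncated cone $L_{x,\delta_x}=\cup_{t\leq R_x/2}tK_{x,\delta_x}$ of (\ref{Lx}), may all be chosen to depend on $x$ only through $\MI(x)$. Because the $n^i$ are constant, the cone $S_x$ of (\ref{Sx}) equals $x+S_0^{\MI(x)}$, and $\overline G\cap B_{r_x}(x)=(x+S_0^{\MI(x)})\cap B_{r_x}(x)$ once $r_x$ is small enough (one shrinks $r_x$, consistently with the bound $r_x<\dist(x,\MV\cup\cup_{i\notin\MI(x)}(\partial G\cap\partial G_i))$, so that no face $\partial G_i$ with $i\notin\MI(x)$ and no point of $\MV$ meets $B_{r_x}(x)$). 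Consequently the unit vector $q_x\in K_x$ with $-q_x$ pointing into $G$, the constant $\lambda_x$ produced from (\ref{include1})--(\ref{include2}), the parameter $\eta_x$, the function $\ell_x$ obtained from Lemma \ref{prop:TF}, and the cutoff $\zeta_x$ of (\ref{def-hx}) can likewise be taken to depend on $x$ only through $\MI(x)$; in particular the extra shrinking of $r_x$ and $\lambda_x$ performed after (\ref{star}) and in (\ref{func2}) is effected uniformly over $x$ with a fixed $\MI(x)$, since (\ref{func2}) is vacuous for constant $\gamma^i$ and the conditions near (\ref{star}) only involve the cone $x+S_0^{\MI(x)}$.

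Next I would observe that the building blocks are translation covariant. The sets $M(x,r)$ of (\ref{Mxr}), its fattenings, and $O(x,r)$ of (\ref{Oxr}) satisfy $M(x+\delta,r)=M(x,r)+\delta$, and, using the exact conical identity above together with $O(x,r)\subset B_r(x)\cap\overline G$ and $r\leq r_x=r_{x'}$, one gets $O(x+\delta,r)=O(x,r)+\delta$ whenever $\MI(x+\delta)=\MI(x)$. The function $k_{x,r}$ of (\ref{kxr}) is, by its very definition and the $\MI$-dependence just recorded, translation covariant: $k_{x+\delta,r}(y+\delta)=k_{x,r}(y)$. Substituting this into the defining formula (\ref{gxr2}), and using the translation covariance of $O(x,r)$ and $M^{\eta_x r}(x,r)$ together with $\zeta_{x+\delta}=\zeta_x$, yields $g_{x,r}(y)=g_{x+\delta,r}(y+\delta)$ for all $r\leq r_x$, which is the asserted translation invariance. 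Since $g_{x+\delta,r}$ is a rigid translate of $g_{x,r}$, the supremum of $|g_{x+\delta,r}|$ and the suprema of $|\nabla g_{x+\delta,r}|$ and of its second partials coincide with those of $g_{x,r}$, so the constant $A_x$ of Proposition \ref{prop:TF2} can be taken equal to $A_{x'}$ whenever $\MI(x)=\MI(x')$; and $r_x=r_{x'}$ is admissible because, apart from the ambient distance bounds (satisfied by hypothesis), the only constraints on $r_x$ depend on $x$ solely through $\MI(x)$, so one may take the common value to be the smaller of the two permissible radii.

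The step I expect to require the most care is the uniformity claim in the second paragraph: verifying that the successive shrinkings of $r_x$ and $\lambda_x$ in the proof of Proposition \ref{prop:TF2} — those making (\ref{include1})--(\ref{include2}) hold, the coincidence-of-projections remark after (\ref{star}), and (\ref{func2}) — can indeed be carried out with a choice depending on $x$ only through $\MI(x)$. This reduces entirely to the exact identity $\overline G\cap B_{r_x}(x)=(x+S_0^{\MI(x)})\cap B_{r_x}(x)$ for small $r_x$, which renders the left-hand sides of those conditions exact translates of one another across points sharing a common index set; modulo this bookkeeping the corollary is immediate. Finally I would note that for $x\in G$ translation invariance was already observed in Remark \ref{rem:gxr}, and that for $x\in\MV$ the statement is vacuous since $\MV$ is finite, so Proposition \ref{prop:TF2} is the only case that needs treatment.
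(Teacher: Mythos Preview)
Your proof is correct and follows essentially the same approach as the paper: both arguments revisit the construction in Proposition~\ref{prop:TF2} and verify that each auxiliary object ($K_x$, $K_{x,\delta_x}$, $\delta_x$, $R_x$, $L_{x,\delta_x}$, $S_x$, $\lambda_x$, $\eta_x$, $\Lambda_x$, $\ell_x$, $q_x$, $M(x,r)$, $O(x,r)$, $\zeta_x$) depends on $x$ only through $\MI(x)$, with $x$ itself entering only as a translation. Your write-up is somewhat more explicit than the paper's about the translation covariance of $k_{x,r}$ and the uniformity of the successive shrinkings of $r_x$ and $\lambda_x$, but the substance is the same.
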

\begin{proof} Suppose that $n^i(\cdot)$ and $\gamma^i(\cdot)$ are constant
vector fields for each $i\in \MI$. Notice that the sets $K_x$ and
$K_{x,\delta_x}$ and the constant $\delta_x$ can be chosen to depend
on $x$ only through $\MI(x)$.  
Similarly, the constants $R_x$ and $\beta_x$ in Lemma
\ref{lem:A1}, and hence the set $L_{x,\delta_x}$ in (\ref{Lx}), depend
on $x$ only through $\MI(x)$. It is obvious that $\overline G=S_x$
near $x$ and $S_x$ depends on $x$ only through $\MI(x)$. In addition,
the constant  $\lambda_x \in (0,1)$ in (\ref{include1}) and
(\ref{include2}) depends on $x$ only through $\MI(x)$ and $r_x$ in
(\ref{include1}) and (\ref{include2})  depends on $x$ only through
$\MI(x)$ when $r_x<\dist(x,\MV\cup \cup_{i\notin \MI(x)}(\partial
G\cap \partial G_i))$. Also, notice that by the proof of Lemma 6.2 of
\cite{Ram06}, the constants $\varepsilon_x\in (0,1),\ \eta_x>0$, the
set $\Lambda_x$ and the function $\ell_x$ depend on $x$ only through
$\MI(x)$. By examining the proof of Proposition \ref{prop:TF2}, we see
that the vector $q_x$, the set $M(x,r)$ in (\ref{Mxr}), the set
$O_{x,r}$ in (\ref{Oxr}) and the function $\zeta_x$ depend on $x$ only
through $\MI(x)$. 
This establishes the corollary.  \end{proof}

\begin{proof}[Proof of Proposition \ref{prop-testfn}]
Set $r(x,\ve) \doteq \left( (r_x/2)  \wedge \ve\right)/\max(\cxtwo, 1)$ for $x \in \overline{G}$ and $\ve
  > 0$.  
  Now, for $x \in G$ and $x \in \partial G\setminus \MV$,  let $g_x^\ve \doteq g_{x,
  r(x,\ve)}$ be the corresponding functions in 
${\cal C}_c^2 (\overline{G})$ constructed in 
Propositions \ref{lem:TF3} and \ref{prop:TF2}, respectively, 
and for $x \in \MV$, let $g_{x,r(\ve)}$ be as constructed in Proposition 
\ref{lem:test1}.   
 Note that the first property established in Proposition
\ref{lem:TF3}   shows that when $x \in G$, $g_x^\ve(y) = 0$ for $y \in \partial
G$.   Together with the finiteness of $\MV$, the second property of Proposition
\ref{lem:test1},  the definition of $r_x$ for $x \in \partial G\setminus \MV$,
and the first property of 
Proposition \ref{prop:TF2}, this shows  that $g_x^\ve$ is constant in a
neighborhood of $\MV$.   On the other hand, when combined with the first property of Proposition \ref{lem:TF3} and the  fourth properties of
Propositions \ref{lem:test1} and \ref{prop:TF2}, it shows that 
$\lan d, \nabla g_x^\ve (y)
\ran \leq 0$ for $d \in d(y)$ and $y \in \partial G$.   Thus, 
$g_x^\ve \in {\cal H}$.  In addition,   the choice of
 $r(\ve)$ and the first two properties of $\{g_{x,r(\ve)}, x \in
\overline{G}\}$ established in Propositions  \ref{lem:TF3},
\ref{lem:test1} and \ref{prop:TF2}
immediately imply that 
 $\{g_x^\ve, x \in \overline{G}\}$ satisfies 
the first two properties stated in 
Proposition \ref{prop-testfn}. 
The second part of the proposition follows directly from  Remark \ref{rem:gxr} and Corollary \ref{cor:A1}. 
\end{proof}

\section{Verification of Assumption \ref{ass:TF} for Example
  \ref{ex:cusp}.}
\label{app:cusp}

In this section we show that Assumption \ref{ass:TF} holds for the 
two-dimensional domain  $G$ with a cusp at the origin described in Example
\ref{ex:cusp}.  
 The argument relies on the construction of a family of  functions
 $\{g_{x,r}\in \C^2_c(\overline G):\ r\in (0,r_x),\ x\in \overline
 G\}$ that is similar to the family constructed in Appendix
 \ref{ap-testfn}.   Once again, the nature of the construction is
 different, depending on whether $x \in G$, $x \in \partial G \sm \MV$ or $x \in
 \MV$. For  $x\in G$, 
let $\{g_{x,r}\in \C^2_c(\overline G):\ r\in (0,r_x)\}$ be
the 
family of functions constructed in Proposition \ref{lem:TF3}.  Now, clearly $(G,d(\cdot))$  is piecewise ${\cal C}^1$ with continuous
reflection and, as discussed in Example \ref{ex:cusp}, Assumption 2'
is satisfied with $\MV=\{0\}$. Therefore, 
there exists a family of functions $\{g_{0,r}, r\in (0,r_0/c^2_0)\}$
that satisfy the properties stated  in Proposition \ref{lem:test1}.

It remains to consider $x \in \partial G \sm \MV$.  
We first consider the case $x \in \partial G_1\setminus \{0\}$. As $x$ moves to the right along $\partial G_1$ to
infinity, the curvature of $\partial G_1$ tends to $0$.
In other words,  the larger $|x|$ is, the flatter $\partial G_1$ is in
any neighbourhood of  $x\in \partial G^1$.  The angle between
$n^1(x)$ and $\gamma^1(x)$ is fixed and is equal to  $\theta_1\in
(-\pi/2,\pi/2)$.  Let $K_x\doteq \{-\gamma^1(x)\}$.  Then
$|\gamma^1(x)|=1/\cos(\theta_1)$ because $\lan n^1(x),\gamma^1(x)\ran = 1$.  By
the geometry of $\partial G_1$ and Lemma \ref{lem:A1},
 there exist $\delta>0$, $R\in (0,1)$ and
$\beta>0$  (all depending only on $\partial G^1$ and $\theta_1$) such
that  with $K_{x,\delta}\doteq  B_\delta(-\gamma^1(x))$, (\ref{sep2})
and (\ref{sep1}) hold with $\delta_x$ and $\beta_x$ replaced by $\delta$
and $\beta$, respectively. 
Under the local  coordinates at $x$ (i.e., taking the tangent line to
$\partial G_1$ as the $x$-axis and $-n^1(x)$ as the $y$-axis), the set $x+\cup_{t\leq R}tK_{x,\delta}$ is identical for each $x\in \partial G_1\setminus \{0\}$.
 Thus, it follows that there exist $\bar r\in (0,1)$ and $\lambda\in
 (0,1)$ (both depending only on $\partial G^1$ and $\theta_1$) such that
 for each $r\in (0,\bar r)$, (\ref{include1}) and (\ref{include2}) hold with $\delta_x$, $R_x$ and $\lambda_x$ replaced by $\delta$, $R$ and $\lambda$, respectively. Let $L_{x,\delta}$ be a truncated (half) cone with vertex at the origin defined by\[ L_{x,\delta}\doteq \cup_{t\leq R/2}tK_{x,\delta}.\]
Thus, $(x+L_{x,\delta})\cap\overline G = \{x\}$ and $L_{x,\delta}$ is identical under  the local  coordinates at $x$ for each $x\in \partial G_1\setminus \{0\}$. For any $\ve\in (0,1)$ and $0<\eta<2\lambda<\infty$, it follows from Lemma \ref{prop:TF} with $\overline{{\cal C}} = \cup_{t \geq 0} t
K_{x,\delta}$,   ${\cal K} = B_{\delta/3}(-\gamma^1(x))$ and $\Lambda  =
\Lambda_x\doteq \{y\in \R^J:\ \eta<\dist\left(y, \cup_{t\geq 0}tK_{x,\delta}\right)\leq 2\lambda\}$ that  there exists a
function $\ell_x:\ \Lambda_x \rightarrow \R$, that satisfies all the
properties stated in Lemma \ref{prop:TF}.  Notice that $\ell_x$ and
$\Lambda_x$ are identical under  the local  coordinates at $x$ for
each $x\in \partial G_1\setminus \{0\}$ and by (\ref{sep2}), $\theta$
in property (3) of $\ell_x$  depends only on $\partial G^1$ and
$\theta_1$. Then, using an argument similar to that used in the proof
of Proposition \ref{prop:TF2}, there exist $0<r_x<\dist(x,\partial
G_2)$ and $A\in (0,\infty)$ and a family  of functions $\{g_{x,r}\in
\C^2_c(\overline G):\ r\in (0,r_x]\}$ satisfying all the properties in
Proposition \ref{prop:TF2}. Here, for each $x_1,x_2\in \partial
G_1\setminus \{0\}$ and $r<r_{x_1}\wedge r_{x_2}$, $g_{x_1,r}$ and
$g_{x_2,r}$ are identical under  the local  coordinates at $x_1$ and
$x_2$, respectively.  By symmetry,  a family of functions
$\{g_{x,r}\in \C^2_c(\overline G):\ r\in (0,r_x],\ x\in \partial
G_2\setminus\{0\}\}$ that satisfy  analogous properties can also be established. Using the family of functions $\{g_{x,r}\in \C^2_c(\overline G):\ r\in (0,r_x], \ x\in \overline G\}$, we can follow a similar argument as the one in the proof of Theorem \ref{thm:test} for the case when $(G, d(\cdot))$ is a polyhedral domain with piecewise constant
reflection to establish Assumption \ref{ass:TF}.

\section{Proof of Lemma \ref{lem:concave}} \label{app:1}
Let
 \[\xi(\lambda)\doteq \int_{\overline G}\left[\psi(f_1(y)-\lambda \ML
   f_1(y),\cdots,f_n(y)-\lambda \ML f_1(y))-\psi(f_1(y),\cdots,f_n(y))
  \right]d\pi(y).\]
Since $\psi$ is concave, it is clear that the map  $\lambda \mapsto
\xi(\lambda)$ is also  concave.
To prove the lemma, we need to show that $\xi(\lambda)\leq 0$.
In turn, to establish this, it suffices to  show that $\xi'(0)\leq 0$ because $\xi$ is concave and $\xi(0)=0$.

Now, $\psi \in \C^2_b (\overline G)$ and for $i = 1, \ldots, n$, $f_i \in \MH$
implies $f_i \in \C^2_b(\overline G)$.
Thus,  the function $\Psi(\cdot)\doteq
\psi\left(f_1(\cdot),\cdots,f_n(\cdot)\right)$ lies in
$\C^2_b(\overline G)$.
 In addition, since $\psi$ is monotone increasing in each variable
 separately, and $f_i \in \MH$ for each $i = 1, \ldots, n$,  it
 follows that
for each $y\in \partial G$ and $d\in d(y)$,
\[\left<d,\nabla \Psi(y)\right> = \sum_{i=1}^n\left<d,\frac{\partial
    \psi }{\partial z_i} \left(f_1(y),\cdots,f_n(y)\right)\nabla
  f_i(y)\right>\leq 0.\]
Moreover, since $f_i\in \MH$ implies $f_i$ is constant in a neighborhood of $\MV$, it follows that $\Psi$ is also constant in a neighborhood of $\MV$. Since $f_i\in \MH$ also implies that $f_i$ is constant outside some compact set, it follows that $\Psi$ is also constant outside some compact set. These show that $\Psi\in \MH$ and therefore, by (\ref{mono0}), that \[\int_{\overline G} \ML \Psi(y) d\pi(y) \geq 0.\]
Now, \[\xi'(0)=-\int_{\overline G}\left(\sum_{i=1}^n\frac{\partial
    \psi}{\partial z_i}\left(f_1(y),\cdots,f_n(y)\right)\ML
  f_i(y)\right)d\pi(y).\] Thus, if we can show that for each $y\in G$,
\be \label{claim:2}\sum_{i=1}^n\frac{\partial
  \psi}{\partial z_i}\left(f_1(y),\cdots,f_n(y)\right)\ML f_i(y)\geq
\ML \Psi(y),\ee it follows that $\xi^\prime (0) \leq 0$.
 In the rest of the proof, we establish (\ref{claim:2}).

Fix $y\in G$. A straightforward calculation shows that
\begin{eqnarray*} \ML \Psi(y) &= & \sum_{i=1}^J b_i(y)\sum_{k=1}^n\frac{\partial \psi}{\partial
z_k}\left(f_1(y),\cdots,f_n(y)\right)\frac{\partial f_k}{\partial x_i} (y)\\ & & +\frac{1}{2}\sum_{i,j=1}^J a_{ij}(y)\left( \sum_{k,l=1}^n\frac{\partial^2\psi}{\partial
z_k\partial z_l}\left(f_1(y),\cdots,f_n(y)\right)\frac{\partial f_k}{\partial x_i}(y)\frac{\partial f_l}{\partial x_j}(y)\right)\\ & & +\frac{1}{2}\sum_{i,j=1}^J a_{ij}(y)\left( \sum_{k=1}^n\frac{\partial \psi}{\partial
z_k}\left(f_1(y),\cdots,f_n(y)\right)\frac{\partial^2 f_k}{\partial x_i\partial x_j}(y)\right)\\ &=& \sum_{i=1}^n\frac{\partial \psi}{\partial
z_i} \left(f_1(y),\cdots,f_n(y)\right)\ML f_i(y)\\ & &
+\frac{1}{2}\sum_{i,j=1}^J a_{ij}(y)\left( \sum_{k,l=1}^n
\frac{\partial^2\psi}{\partial
z_k\partial z_l}\left(f_1(y),\cdots,f_n(y)\right)\frac{\partial f_k}{\partial x_i}(y)\frac{\partial f_l}{\partial x_j}(y)\right).\end{eqnarray*}
Hence, to establish (\ref{claim:2}) it suffices to show that
\[ \sum_{i,j=1}^J a_{ij}(y)\left(
  \sum_{k,l=1}^n\frac{\partial^2\psi}{\partial z_k\partial z_l}\left(f_1(y),\cdots,f_n(y)\right)\frac{\partial f_k}{\partial x_i}(y)\frac{\partial f_l}{\partial x_j}(y)\right)\leq 0. \]
Since the matrix $a(y)$ is positive semidefinite, let $a^{1/2}(y)$ be its
positive semidefinite square root. Then, we have
\begin{eqnarray*}\sum_{i,j=1}^J a_{ij}(y) \frac{\partial f_k}{\partial
    x_i}(y)\frac{\partial f_l}{\partial x_j}(y) &=&  (\nabla
  f_k(y))^Ta(y) \nabla f_l(y) \\ &=& \left(a^{1/2}(y)\nabla
    f_k(y)\right)^T \left(a^{1/2}(y)\nabla f_l(y)\right) \\ &=&
  \sum_{m=1}^J c^k_mc^l_m,\end{eqnarray*} where $c^k =
a^{1/2}(y)\nabla f_k(y)$ for each $k=1,\cdots,n$.
Now, we can see that
\begin{eqnarray*}
&&\sum_{i,j=1}^J a_{ij}(y)\left( \sum_{k,l=1}^n\frac{\partial^2\psi}{ \partial
z_k\partial z_l}
\left(f_1(y),\cdots,f_n(y)\right)\frac{\partial f_k}{\partial x_i}(y) \frac{\partial
f_l}{\partial x_j}(y)\right)\\ && \qquad \qquad =
\sum_{k,l=1}^n\frac{\partial^2\psi}{\partial z_k\partial z_l}
\left(f_1(y),\cdots,f_n(y)\right)\left( \sum_{i,j=1}^J a_{ij}(y)\frac{\partial
   f_k}{\partial x_i}(y)\frac{\partial f_l}{\partial x_j}(y)\right)\\
&&\qquad\qquad = \sum_{k,l=1}^n\frac{\partial^2\psi}{\partial
z_k\partial z_l}\left(f_1(y),\cdots,f_n(y)\right)\sum_{m=1}^J c^k_mc^l_m \\ &&\qquad \qquad =\sum_{m=1}^J \left(\sum_{k,l=1}^n\frac{\partial^2\psi}{\partial
z_k\partial z_l} \left(f_1(y),\cdots,f_n(y)\right)c^k_mc^l_m\right) \leq 0,
\end{eqnarray*}
where the last inequality holds because the Hessian matrix of $\psi$
is negative semidefinite since $\psi$ is concave.
\endproof
\bigskip

\section{Proof of Lemma \ref{lem:g0}}
\label{sec:lemg0}

 Fix $x\in \MV$.  By Assumption \ref{ass:V} there exist positive
constants $r_x,\ c_x, \alpha_x$ that satisfy $h(y) \geq  c_x|y-x|$,
$\lan \nvec_x, a(y) \nvec_x\ran \geq \alpha_x$ and   $\lan \nvec_x,d\ran\geq 0$ for all $d\in
d(y)$ and all $y\in \overline G
\cap B_{r_x}(x)$. Let $\varepsilon_0\in (0,r_x)$ be such that
$\varepsilon_0$ is less than the distance between any two points in
$\MV$. For each $\varepsilon\in (0,1)$ such that
$2(\varepsilon+\sqrt{\varepsilon})<c_x\varepsilon_0$ and $\delta\in
(0,\varepsilon)$ sufficiently small such that
$\delta+\sqrt{\delta}<\varepsilon$, let $l_{\delta,\varepsilon}$ be a
function such that
\[ l_{\delta,\varepsilon}(s)=\left\{\begin{array}{ll} 0 & \mbox{ if }
    s\leq \delta,\\
    \frac{2\sqrt{\delta}+1}{3\sqrt{\delta}}(s-\delta)^3  & \mbox{ if }
    \delta< s \leq \delta +\sqrt{\delta}, \\
    2\delta(\sqrt{\delta}+1)+s^2-(\delta+\sqrt{\delta})^2 & \mbox{ if
    } \delta +\sqrt{\delta} < s\leq \varepsilon, \\ \begin{array}{ll}
      & 2\delta(\sqrt{\delta}+1)-(\delta+\sqrt{\delta})^2
      +\varepsilon^2 \\
      &+\varepsilon^{3/2}-\sqrt{\varepsilon}(s-\varepsilon -
      \sqrt{\varepsilon})^2  \end{array} & \mbox{ if } \varepsilon < s
    \leq \varepsilon +\sqrt{\varepsilon}, \\
    2\delta(\sqrt{\delta}+1)-(\delta+\sqrt{\delta})^2 +\varepsilon^2
    +\varepsilon^{3/2} & \mbox{ if } \varepsilon +\sqrt{\varepsilon}
    <s. \end{array}\right.\]
It is easy to verify that
\be
\label{lde-1}
\begin{array}{ll}
0 \leq l_{\delta,\varepsilon}(s)\leq 5 \ve,   \ 0\leq l_{\delta,\varepsilon}'(s)\leq 2\sqrt{\varepsilon},  & \mbox{
  for } s\in
\R, \\
l_{\delta,\varepsilon}'(s)=0, & \mbox{ for }
s>\varepsilon+\sqrt{\varepsilon}.
\ea
\ee
Also, note that $l_{\delta,\varepsilon}\in \C^1(\R)$ is piecewise
differentiable with derivative
\[ l_{\delta,\varepsilon}''(s)=\left\{\begin{array}{ll} 0 & \mbox{ if
    } s\leq \delta,\\ \frac{4\sqrt{\delta}+2}{\sqrt{\delta}}(s-\delta)
    & \mbox{ if } \delta\leq s < \delta +\sqrt{\delta}, \\  2 & \mbox{
      if } \delta +\sqrt{\delta} < s< \varepsilon, \\
    -2\sqrt{\varepsilon}  & \mbox{ if } \varepsilon < s < \varepsilon
    +\sqrt{\varepsilon}, \\ 0 & \mbox{ if } \varepsilon
    +\sqrt{\varepsilon} <s. \end{array}\right.\]

We now use a standard mollifcation argument to construct a $\C^2(\R)$
function with similar properties.
 Let $\{\phi_n\in \C^\infty_c(\R), n\in \N\}$ be a sequence of
 non-negative functions with $\int_{\R} \phi_n(x) \, dx =  1$ and
 compact supports that shrink  to $\{0\}$.
 Define
\[ l^n_{\delta,\varepsilon}\doteq \phi_n\ast l_{\delta,\varepsilon},\]
where $\ast$ denotes the convolution operation. Then
$l^n_{\delta,\varepsilon}\in \C^\infty_c(\R)\oplus \R$ and for $n$ sufficiently
large,  there exist $\kappa_n > 0$ with $\lim_{n \ra \infty} \kappa_n = 0$
such that
\be
\label{lde-2}
\ba{ll}
(l^n_{\delta,\varepsilon})''(s)\geq 2  & \mbox{ if } \delta
+2\sqrt{\delta} \leq s \leq \varepsilon/2, \\
 |(l^n_{\delta,\varepsilon})''(s)|\leq 2\sqrt{\varepsilon} & \mbox{ if
 } s\geq \varepsilon - \kappa_n \\
(l^n_{\delta,\varepsilon})''(s)=0 & \mbox{ if } s\geq
2(\varepsilon+\sqrt{\varepsilon}) \\
l^n_{\delta,\varepsilon}(s)=2\delta(\sqrt{\delta}+1)-(\delta+\sqrt{\delta})^2 +\varepsilon^2
    +\varepsilon^{3/2} & \mbox{ if } s\geq
2(\varepsilon+\sqrt{\varepsilon}) \\
(l^n_{\delta,\varepsilon})''(s)\geq 0 & \mbox{  otherwise. }
\ea
\ee
Now, choose $n$ sufficiently large and let
\[ g_{\delta,\varepsilon}(y) \doteq \left\{\begin{array}{ll} l^n_{\delta,\varepsilon}(h(y)), & \mbox{ if }
 y \in \overline G \cap B_{r_x}(x) \\ 2\delta(\sqrt{\delta}+1)-(\delta+\sqrt{\delta})^2 +\varepsilon^2
    +\varepsilon^{3/2}, & \mbox{ if } y \in \overline G \setminus B_{r_x}(x). \end{array}\right. \]

We now show that $g_{\delta, \varepsilon}$ has the desired
properties.  Firstly,
 since $l_{\delta,\varepsilon}(s)=0$ when $s\leq \delta$, it follows
 that $g_{\delta,\varepsilon}$ is zero in a neighborhood of $x$.  In
 addition,  the second property in (\ref{lde-1}),  the fact that
 $\ve + \sqrt{\ve} < c_x \ve_0$ and  $h(y)
 \geq c_x|y-x|$ imply that
 $g_{\delta,\varepsilon}$ is  constant when $y\in \overline G
 \setminus B_{\varepsilon_0}(x)$.  By the choice of $\ve_0$, this
 implies that  $g_{\delta,\varepsilon}$ is constant in a neighborhood
 of $\MV$.
Also, by the nonnegativity of $\phi_n$ and $l_{\delta, \varepsilon}$
 as well as the property of $\nvec_x$
stated in Assumption \ref{ass:V}, it follows
 that for every $y\in \partial G$,
\[\lan \nabla g_{\delta,\varepsilon}(y) , d\ran =
  \left(\int_{\R}l_{\delta,\varepsilon}'(h(y)-z)\phi_n(z)dz\right)
  \left\lan \nvec_x , d\right\ran \geq 0, \qquad d \in d(y). \]
Thus,
$g_{\delta,\varepsilon}\in \C^2_c(\overline G)\oplus \R$  and
$-g_{\delta,\varepsilon}\in \MH$.
In addition,  the first property in  (\ref{lde-1})
shows that  $\sup_{y \in \overline G} |g_{\delta,\varepsilon}(y)|\leq
5\varepsilon$ and  $\sup_{y\in \overline G}|\nabla g_{\delta,\varepsilon}(y)|\leq
 2\sqrt{\varepsilon}$, while
the second property in (\ref{lde-1}) implies that
\[\left|\lan b(y), \nabla g_{\delta,\varepsilon}(y) \ran \right|\leq
2\sqrt{\varepsilon} \sup_{|y-x|\leq 2\varepsilon_0} |b(y)|, \qquad
y \in \overline G. \]

Next, note that
\[\sum_{i,j=1}^J a_{ij}(y)\frac{\partial^2g_{\delta,\varepsilon}(y)}{\partial
x_i\partial x_j}=(l^n_{\delta,\varepsilon})''(h(y))\nvec_x^Ta(y)\nvec_x.\]
The first property in (\ref{lde-2})  and the definition of $\alpha_x$
implies that 
\[\sum_{i,j=1}^J a_{ij}(y)\frac{\partial^2g_{\delta,\varepsilon}(y)}{\partial
x_i\partial x_j}\geq 2\alpha_x, \qquad \mbox{ if }
\delta+2\sqrt{\delta}\leq h(y)\leq \varepsilon/2, y\in \overline G \cap B_{r_x}(x). \]
Moreover, by the second and third  properties in
(\ref{lde-2}), and the fact that
$\ve + \sqrt{\ve} < c_x \ve_0$
it is clear that for each $y\in \overline G\cap B_{r_x}(x)$ with $h(y)\geq \varepsilon-\kappa_n$,
\[\left|\sum_{i,j=1}^J a_{ij}(y)\frac{\partial^2g_{\delta,\varepsilon}(y)}{\partial
x_i\partial x_j}\right|\leq
\left|(l^n_{\delta,\varepsilon})''(h(y))\right||a(y)|\leq
2\sqrt{\varepsilon} \sup_{|y-x|\leq 2\varepsilon_0} |a(y)|.\]
The fourth property in (\ref{lde-2}) shows that
 when $y\in \overline G\cap B_{r_x}(x)$ with $h(y)<\delta+2\sqrt{\delta}$ or
 $\varepsilon/2<h(y)<\varepsilon-\kappa_n$,
\[\sum_{i,j=1}^J a_{ij}(y)\frac{\partial^2g_{\delta,\varepsilon}(y)}{\partial
x_i\partial x_j}\geq 0.\] Thus, we have shown that
 that properties 1--6 hold with $c=2\min_{x \in {\mathcal V}} \alpha_x$ and
\[ C  = 5 \vee 2
\sup_{|y-x|\leq 2\varepsilon_0} (|a(y)| \vee |b(y)|)<\infty.\]

\bigskip

\end{document}